\documentclass[11pt,letterpaper]{article}
\usepackage[margin=1in]{geometry}
\usepackage{times}
\usepackage{natbib}
\setcitestyle{authoryear,round,citesep={;},aysep={,},yysep={;}}
\usepackage[T1]{fontenc}
\ifdefined\XeTeXversion
  \usepackage{newunicodechar}
  \newunicodechar{–}{\textendash}
\fi
\usepackage{amsmath,amssymb,amsthm,mathtools}
\usepackage{graphicx,booktabs,tabularx,longtable,array,multirow}
\usepackage{xcolor,microtype,url,hyperref,placeins}
\usepackage{tikz,pgfplots}
\makeatletter

\def\section{\@startsection{section}{1}{\z@}
  {-0.88ex plus -0.10ex minus -.05ex}
  {0.32ex plus 0.06ex minus .05ex}
  {\large\sc\raggedright}}

\def\subsection{\@startsection{subsection}{2}{\z@}
  {-0.88ex plus -0.10ex minus -.05ex}
  {0.32ex plus 0.06ex minus .05ex}
  {\normalsize\sc\raggedright}}

\def\subsubsection{\@startsection{subsubsection}{3}{\z@}
  {-0.88ex plus -0.10ex minus -.05ex}
  {0.28ex plus 0.05ex minus .05ex}
  {\normalsize\sc\raggedright}}

\def\paragraph{\@startsection{paragraph}{4}{\z@}
  {1.12ex plus 0.10ex minus .05ex}
  {-0.70em}
  {\normalsize\bf}}

\makeatother
\pgfplotsset{compat=1.18}
\usetikzlibrary{arrows.meta,positioning,calc,fit,backgrounds}
\DeclareMathVersion{figuretimes}
\SetSymbolFont{operators}{figuretimes}{OT1}{ztmcm}{m}{n}
\SetSymbolFont{letters}{figuretimes}{OML}{ztmcm}{m}{it}
\SetSymbolFont{symbols}{figuretimes}{OMS}{ztmcm}{m}{n}
\SetSymbolFont{largesymbols}{figuretimes}{OMX}{ztmcm}{m}{n}
\SetMathAlphabet{\mathrm}{figuretimes}{OT1}{ptm}{m}{n}
\SetMathAlphabet{\mathbf}{figuretimes}{OT1}{ptm}{bx}{n}
\SetMathAlphabet{\mathit}{figuretimes}{OT1}{ptm}{m}{it}
\graphicspath{{figures/}{./}}
\hypersetup{hidelinks,pdftitle={Finance-Informed Operator Learning for Option Pricing with Quantum-Compatible Realizations},pdfauthor={Jiarui Feng, Bingyang Hu, Jiang Yu, Changhong Mou, Yeyu Zhang},pdfsubject={arXiv preprint}}
\newtheorem{proposition}{Proposition}[section]

\newcommand{\R}{\mathbb{R}}
\newcommand{\E}{\mathbb{E}}
\newcommand{\Cref}{C^{\mathrm{ref}}}
\newcommand{\Pa}{\mathcal{P}_a}
\newcommand{\Aa}{\mathcal{A}_{a,\tau}}
\newcommand{\FAL}{\mathcal{C}}
\newcommand{\norm}[1]{\left\lVert #1\right\rVert}
\newcommand{\relu}[1]{\left(#1\right)^+}
\newcommand{\diag}{\operatorname{diag}}
\newcommand{\offdiag}{\operatorname{offdiag}}
\newcommand{\sigm}{\operatorname{sigm}}
\newcommand{\spdelta}{\operatorname{sp}_{\delta}}
\newcommand{\RelL}{\mathrm{RelL2}}
\newcommand{\pP}{\widehat{V}_{\mathrm{FI}}}
\newcolumntype{Y}{>{\raggedright\arraybackslash}X}
\numberwithin{equation}{section}
\title{Finance-Informed Operator Learning\\for Option Pricing with\\Quantum-Compatible Realizations}
\author{Jiarui Feng\textsuperscript{1}, Bingyang Hu\textsuperscript{2}, Jiang Yu\textsuperscript{1},\\
Changhong Mou\textsuperscript{3}, Yeyu Zhang\textsuperscript{1}\\[0.6em]
\small \textsuperscript{1}School of Mathematics, Shanghai University of Finance and Economics, China\\
\small \textsuperscript{2}Department of Mathematics and Statistics, College of Sciences and Mathematics,\\
\small Auburn University, USA\\
\small \textsuperscript{3}Department of Mathematics and Statistics, Utah State University, USA}
\date{}
\begin{document}
\maketitle
\begin{abstract}
Pricing European options under local volatility requires solving a partial differential equation whose coefficients change with every recalibration, and practitioners repeatedly need not only prices but also their sensitivities to the underlying across spot--time surfaces for many strike and maturity configurations. Neural surrogates are an efficient alternative, approximating the solution operator directly so that model evaluation replaces repeated numerical solves. Near expiry, however, the solution loses regularity, and a network trained on prices alone can misstate the curvature needed for hedging or return prices outside the no-arbitrage bounds. We propose a finance-informed Deep Operator Network (FI-DeepONet) that decomposes the solution operator into a closed-form reference price and an additive learned correction. The framework is finance-informed in that financial structure enters the representation and the output map rather than the loss: the reference uses the strike-line integrated variance---the time integral of the squared local volatility evaluated at the strike, which captures the leading near-expiry curvature singularity, and a smooth monotone admissibility layer enforces pointwise no-arbitrage price bounds, at the cost of piecewise rather than global smoothness in the spot variable. A deterministic post-processing step then corrects the predicted grid after inference and is evaluated separately from the learned operator. We establish a short-maturity estimate and differentiated asymptotics for the exact correction, together with identities relating errors in the learned correction to pricing and sensitivity errors, as well as PDE-residual errors. On in-distribution (ID) tests, the model reduces the global relative price error by nearly an order of magnitude over vanilla and physics-informed DeepONet baselines across independent initializations. On out-of-distribution (OOD) tests, generalization is moderate when the input parameters lie outside the training range but within the same parametric class of local-volatility functions. We also test the trained model on real market data using index option quotes and find that it remains accurate without market-specific network retraining, though it does not improve on the analytic formula given the same volatility input. We further give a quantum-compatible realization of the trained operator, in which selected linear maps are either compiled exactly or approximated within a restricted diagonal–orthogonal family before supervised adaptation.
\end{abstract}
\section{Introduction}
\label{sec:intro}

Many scientific and engineering workflows solve the same family of partial differential equations (PDEs) many times, once for each new coefficient function or parameter setting. Neural operators pay off this cost by learning the map from PDE inputs to solutions, so that a single trained model replaces the numerical solver at inference time \citep{r17,r18,r19,r20}. Option pricing is a natural instance. A European call option gives its holder the right to buy an asset at a fixed price (the strike) on a fixed future date (the maturity). Under a local-volatility model, where the asset's volatility is a given function of asset price and time, the option price solves a linear parabolic PDE whose diffusion coefficient is set by that function \citep{r01,r02,r03}. Because the volatility function is recalibrated frequently and many contracts must be priced under each calibration, an operator that maps the volatility function and contract terms to the full price surface can replace a large number of repeated solves.
Accurate prices alone are not enough in this setting. Risk management also relies on the sensitivities of the price to the asset price: its first derivative, Delta, and its second derivative, Gamma. These become especially challenging to learn near maturity. The payoff has a kink at the strike (it is zero below the strike and grows linearly above it), so as maturity approaches, Gamma grows without bound at the strike and concentrates in a shrinking neighborhood around it \citep{r06,r07}. A model with small average price error can therefore still have
large Gamma errors in the near-strike, short-maturity region. A generic network must learn this near-singular behavior from data, even though its form is known in closed form.
This raises a question: where should known (prior) knowledge enter a neural operator? One common approach is physics-informed operator learning, which
adds the PDE residual and boundary conditions to the training loss~\citep{r27,r28,r29}. This encourages satisfaction of the equation but does not directly supply the singular structure of the solution, and its performance is sensitive to loss balancing, optimization, and the placement of collocation points \citep{r30,r31,r32}. An alternative, standard in classical numerical analysis but less common in operator learning, is to build the structure into the representation: subtract an analytic function that carries the singularity and let the network learn only the smoother remainder. Option pricing offers three such pieces of structure. (i) The payoff at maturity is known exactly. (ii) No-arbitrage arguments confine the price to a known interval: it cannot exceed the asset price, and it cannot fall below the larger of zero and the asset price minus the discounted strike. (iii) Near the strike at short maturities, the price is well approximated by the Black--Scholes formula parameterized by the strike-line integrated variance, that is, the time integral of the squared local volatility evaluated at the strike.

We propose the finance-informed Deep Operator Network (FI-DeepONet), which uses all three. Its core is a strike-matched carrier--residual ansatz (RA): the price is written as an analytic reference, which we call the carrier, plus a learned correction scaled by the remaining time to maturity. The carrier is the Black--Scholes price parameterized by this strike-line integrated variance. It therefore depends on the input volatility function, not only on the contract terms, and it reproduces the payoff kink exactly. 
Under standard well-posedness conditions, assuming that the local
variance is bounded above and below by positive constants and is
Lipschitz in log-moneyness, we prove that the exact correction
vanishes at least linearly in the remaining time to maturity. This motivates the rescaling: dividing the correction by the remaining time gives a bounded learning target, and when the leading-order term is nonzero, linear scaling is the only power scaling that keeps the target both bounded and nonvanishing at maturity. A Financial Admissibility Layer (FAL), a smooth monotone map into the no-arbitrage interval, then enforces the price bounds at every output point. The model is trained on reconstructed prices with a standard regression loss. We retain a standard branch--trunk DeepONet architecture and encode
the finance-informed structure through the representation and output map rather than the loss. The closest precedents are residual learning around analytic pricing approximations \citep{r72}, exact-terminal and singularity-aware constructions \citep{r47}, and maturity-gated Black--Scholes representations \citep{r70}. Our reference is distinguished by strike-line variance matching: it integrates the input local-variance field along the strike line to construct the carrier's strike-line integrated variance. We also analyze the resulting correction and its derivatives, rather than only reporting lower errors. Appendix~\ref{app:context} gives a broader comparison with related work.

Our contributions are as follows.
\begin{itemize}
    \item \textbf{A finance-informed operator representation.} We propose a strike-matched carrier--residual ansatz in which an input-dependent Black--Scholes carrier, constructed from the strike-line integrated variance, captures the terminal singularity, while a DeepONet learns only a time-rescaled correction. A Financial Admissibility Layer maps every output into the no-arbitrage interval. The backbone and training objective are those of a standard DeepONet trained by price regression, so prior knowledge enters through the representation rather than through the loss.

    \item \textbf{Short-maturity analysis of the learning target.} We prove that the exact correction vanishes at least linearly in the remaining time to maturity and characterize its limiting shape, together with its first two spot derivatives, near the strike. We show that linear rescaling is the unique power scaling that keeps the target bounded and nonzero when the leading-order term is nonzero, and we derive identities that relate errors in the learned correction to errors in price, Delta, Gamma, and the PDE residual.

    \item \textbf{Improved accuracy with controlled attribution.} On a local-volatility benchmark with finite-difference references, FI-DeepONet reduces global relative price error by about $6.2\times$ and $7.5\times$ relative to data-driven and physics-informed DeepONet baselines, respectively, and also reduces Delta and Gamma errors. Ablations attribute the gains to the change of learning target and the output layer, and quantify the trade-off between adding a PDE loss and price accuracy. The model generalizes to shifted contract and volatility parameters, while a selected final checkpoint does not accurately reproduce volatility perturbations outside the training family.

    \item \textbf{Quantum-compatible realization and market-data evaluation.} In contrast to Quantum DeepONet, which trains an orthogonally parameterized network from the start \citep{r15}, we begin from the pretrained dense model and realize selected linear maps with circuits that implement orthogonal transformations. We distinguish exact compilation of selected weight matrices from restricted circuit-compatible approximation followed by adaptation, and quantify the accuracy cost relative to classical structured approximations of equal computational cost. We further evaluate the model on index-option quotes, using implied volatilities as constant-volatility inputs and Black--Scholes prices on the same inputs as references, to assess transfer from synthetic to market-derived inputs. The learned operator remains accurate on market-derived inputs without market-specific retraining, although it does not outperform the same-input Black--Scholes reference. Neither study is intended to demonstrate a quantum speedup or a market calibration.
\end{itemize}
\section{Pricing problem and finance-informed representation}
\label{sec:model}

We consider a European call option on an asset that pays no dividends.
The holder may buy the asset at a fixed price $K>0$ (the \emph{strike})
on its expiry date $T>0$ (the \emph{maturity}).
Let $V(S,t)$ be the option value when the asset's current
(\emph{spot}) price is $S$ at time $t$.
Its terminal payoff is $V(S,T)=\max(S-K,0)$.
The local volatility $\sigma(S,t)$ specifies the scale of random
proportional price fluctuations at each asset price and time.
With constant risk-free interest rate $r$, no-arbitrage pricing gives
the backward PDE \citep{r01,r03}
\begin{equation}
\partial_t V
+\tfrac12 \sigma^2(S,t)S^2\partial_{SS}V
+rS\partial_S V-rV=0,
\qquad S>0,\ t<T.
\end{equation}
We write $a=\sigma^2$ for the local variance and $\tau=T-t$
for the remaining time.
At the operator level, the learning task is
$(a,K,T,r)\mapsto V(\cdot,\cdot)$,
with $(S,t)$ specifying where to evaluate the price surface.
Delta, $\Delta=\partial_S V$, measures price sensitivity to the asset
price; Gamma, $\Gamma=\partial_{SS}V$, measures its curvature.
A financial glossary is provided in Appendix~\ref{app:finance}.
No-arbitrage arguments also give the pointwise price bounds
\begin{equation}
L(S,\tau)=\relu{S-Ke^{-r\tau}}
\le V(S,T-\tau)\le U(S,\tau)=S.
\label{eq:bounds}
\end{equation}

\paragraph{Strike-matched carrier and learned correction.}
Instead of learning the full price directly, we separate an analytic
reference (the carrier) from a learned correction.
The reference fixes the spatial argument of the local variance at
$S=K$ while retaining its time dependence.
The resulting strike-line integrated variance is
\begin{equation}
A_0(\tau)=\int_0^\tau a(K,T-u)\,du.
\label{eq:strikeline}
\end{equation}
The resulting Black--Scholes carrier is
$\Cref=S\Phi(d_+)-Ke^{-r\tau}\Phi(d_-)$,
where $d_\pm=[\log(S/K)+r\tau\pm A_0/2]/\sqrt{A_0}$
and $\Phi$ is the standard normal distribution function.
It exactly prices this spatially frozen model and equals the
payoff at $\tau=0$ (Appendix~\ref{app:ra}).
The benchmark directly evaluates $a(K,\cdot)$ from the coefficient
generator, without sensor interpolation (Appendix~\ref{app:solver}).
Sensor-only inputs would require approximate strike-line evaluation;
Appendix~\ref{app:ra} treats the resulting reference error.
The strike-matched carrier--residual ansatz (RA) reconstructs the pre-FAL price as
\begin{equation}
z_\theta=\Cref+\chi R_\theta,
\qquad \chi=\tau/T.
\label{eq:ra}
\end{equation}
Here $\chi$ is the remaining-life fraction and $R_\theta$ is the
learned time-rescaled correction in price units.
Under the value-estimate assumptions of Section~\ref{sec:theory},
including positive boundedness and log-moneyness Lipschitz regularity,
$W=V-\Cref=O(\tau)$, so
$R^\ast=W/\chi$ is bounded.
Differentiated profiles require stronger local smoothness and allow
spot derivatives to grow near expiry.
Training supervises reconstructed prices, not correction labels.

\begin{figure}[!htbp]
\centering
\resizebox{\linewidth}{!}{\input{figures/fig1}}
\caption{FI-DeepONet overview.
The carrier uses strike-line integrated variance; DeepONet predicts
$R_\theta$, RA reconstructs $z_\theta$, and FAL enforces pointwise
price bounds.
Training uses reconstructed-price regression on finite-difference (FD) labels.
Post-training QO exactly compiles selected maps; QDO approximates
and adapts them. Plots are schematic.}
\label{fig:mechanism}
\end{figure}
\FloatBarrier

\paragraph{DeepONet architecture.}
The branch input $\xi$ combines sampled payoff values,
sampled volatilities $\sigma(S_i,t_j)$, and scalar features.
Separate encoders are fused and combined with a trunk evaluated at
$m=S/K$ (asset price relative to the strike) and
$\bar t=t/T$ (normalized elapsed time).
The branch--trunk readout gives
$r_\theta=\mu+s_y f_\theta$ and $R_\theta=Kr_\theta$,
where $\mu,s_y$ are fixed training-normalization constants
(Appendix~\ref{app:training}).
The reported benchmark fixes $T=1$ and uses scalar features
$(K,r,\sigma_0,\beta,\gamma)$ for the contract and volatility family
(Appendix~\ref{app:data}).
Retaining absolute payoff samples and $K$ with fixed standardizers
makes the implementation scale-aware, not exactly scale-equivariant
(Appendix~\ref{app:scale}).

\paragraph{Financial Admissibility Layer and output stage.}
FAL maps the raw price $z_\theta$ into the interval in
\eqref{eq:bounds}.
Using the softplus
$\operatorname{sp}_\delta(q)=\delta\log(1+e^{q/\delta})$,
a smooth approximation of $\max(q,0)$, define
\begin{equation}
\FAL_\delta(z;L,U)
=L+\operatorname{sp}_\delta(z-L)-\operatorname{sp}_\delta(z-U).
\label{eq:mainfal}
\end{equation}
For fixed bounds, the map is smooth, increasing, and nonexpansive,
but introduces smoothing bias (Appendix~\ref{app:fal}).
For $S>0,t<T$, the prediction is
$\pP=\FAL_{K\delta_v}(z_\theta;L,U)$ with $\delta_v=0.002$.
Exact payoff and zero-spot assignments were added after training
with unchanged weights and no held-out outcomes.
They do not remove the fixed-width near-terminal limit bias.
FAL enforces pointwise bounds, not spot monotonicity or convexity;
the kinked lower bound makes the price only piecewise smooth,
so Greek and PDE identities apply on smooth pieces.

FI denotes this output before optional grid post-processing (PP).
PP repairs selected discrete price-bound, terminal, time-monotonicity,
and spot-slope violations after inference, without changing weights;
it is evaluated separately and provides no continuous-surface
arbitrage certificate (Appendix~\ref{app:pp}).
The complete OOD study retains an earlier output convention
(Appendix~\ref{app:ood}).
\section{Theoretical Analysis of Strike-Line Variance Matching}
\label{sec:theory}
We write $x=\log(S/K)$ for log-moneyness and
$W(S,t)=V(S,t)-\Cref(S,t)$ for the exact correction, using the calendar-time convention of Section~\ref{sec:model}.
The value estimate assumes local variance bounded above and below by positive constants and uniformly Lipschitz in $x$. The differentiated profiles require the stronger local smoothness specified in Appendix~\ref{app:ra}, which states the full assumptions.

\begin{proposition}[Correction cancellation]
\label{prop:value}
Under the value-estimate assumptions in Appendix~\ref{app:ra}, there is a constant $C$, independent of $\tau\in(0,T]$, such that
\begin{equation}
\sup_{x\in\R}\frac{|W(Ke^x,T-\tau)|}{K}\le C\tau,
\label{eq:valuebound}
\end{equation}
uniformly over the compact class of contracts stated there.
\end{proposition}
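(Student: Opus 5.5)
We prove the bound by a Duhamel (variation-of-constants) representation of the correction. Both $V$ and $\Cref$ have the same terminal payoff. $\Cref$ solves the spatially frozen PDE whose coefficient is $a(K,t)$.

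Pass to log-moneyness $x$ and remaining time $\tau$, and write $u(x,\tau)=V/K$ and $c(x,\tau)=\Cref/K$. Then $u$ solves
\[
\partial_\tau u=\tfrac12 a(Ke^x,T-\tau)(\partial_{xx}u-\partial_x u)+r\partial_x u-ru .
\]
The carrier $c$ solves the same equation with $a$ replaced by $a_0(\tau)=a(K,T-\tau)$, and $u(\cdot,0)=c(\cdot,0)=(e^x-1)^+$. The difference $w=W/K$ has zero initial data and satisfies
\[
\partial_\tau w-\mathcal L_\tau w=f,\qquad f=\tfrac12\bigl(a(Ke^x,T-\tau)-a_0(\tau)\bigr)\bigl(\partial_{xx}c-\partial_x c\bigr),
\]
where $\mathcal L_\tau$ is the true variable-coefficient generator. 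Uniform parabolicity, $0<\underline a\le a\le\bar a$, together with Lipschitz continuity in $x$ gives a fundamental solution with Gaussian (Aronson-type) upper bounds. Duhamel's formula then yields
\[
|w(x,\tau)|\le\int_0^\tau\!\!\int_{\R}\Gamma(x,\tau;y,s)\,|f(y,s)|\,dy\,ds .
\]
Here the kernel integrates to at most $1$ because of the discounting $-r$ term.

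Next we bound the source using the closed form of $c$. A direct Black--Scholes computation gives $\partial_{xx}c-\partial_x c=e^{x}\varphi(d_+)/\sqrt{A_0(s)}$, and $\underline a s\le A_0(s)\le\bar a s$. The Lipschitz assumption gives $|a(Ke^y,\cdot)-a_0|\le L|y|$. Hence
\[
|f(y,s)|\le \frac{L}{2}\,\frac{|y|e^{y}\varphi(d_+(y,s))}{\sqrt{\underline a s}} .
\]
Since $d_+\approx y/\sqrt{A_0}$, the factor $|y|$ against the Gaussian in $y/\sqrt{s}$ contributes $O(\sqrt s)$. So $\sup_y|f(y,s)|\le C$, uniformly for $s\in(0,T]$ and over the compact contract class, where $K$, $r$ and the volatility parameters range over compact sets. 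This is the key cancellation: the frozen-coefficient error vanishes exactly on the strike line, where the carrier's curvature blows up like $s^{-1/2}$.

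Combining the two bounds, $|w(x,\tau)|\le\int_0^\tau\sup_y|f(\cdot,s)|\,ds\le C\tau$ uniformly in $x$, which is \eqref{eq:valuebound}.

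The main obstacle is justifying the Duhamel representation and the maximum-principle bound on the unbounded domain. First, $c$ and $u$ grow like $e^x$, so the comparison must be applied in a growth class. $w$ itself is bounded by the no-arbitrage bounds \eqref{eq:bounds}, since $|w|\le e^x$. For $x\to+\infty$ one needs $w$ bounded, which follows because both $u-(e^x-e^{-r\tau})$ and $c-(e^x-e^{-r\tau})$ decay. Second, $f$ is not integrable near $s=0$ pointwise off the strike: for fixed $y\ne0$, $\varphi(d_+)/\sqrt s$ is tiny, but it is large near $y=0$ only where the factor $|y|$ kills it. So we must use the sup bound above rather than an $L^1$ bound.

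The alternative I would try first is a barrier argument, which avoids heat-kernel estimates. Show that $\pm C\tau e^{\lambda\tau}$ are super- and subsolutions of the $w$-equation, using the uniform bound on $f$. Then apply the weak maximum principle for bounded solutions of uniformly parabolic equations with bounded coefficients after the log change of variables; the Phragmén–Lindelöf growth condition is satisfied because $w$ is bounded.
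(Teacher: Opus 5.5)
Your proposal is correct and follows essentially the paper's own proof: subtract the frozen-coefficient carrier equation and write the source as $\tfrac12[\bar a(x,\tau)-\bar a(0,\tau)]\,e^x\phi(d_+)/\sqrt{A_0}$. You then use $a_-\tau\le A_0\le a_+\tau$, completion of the square, and spatial Lipschitz continuity to bound it uniformly in $\tau$, and conclude by Duhamel/comparison with zero initial data. The one difference is that the paper assumes comparison in the exponential-growth class outright and carries a factor $e^{|r|(\tau-s)}$ (which you need instead of ``kernel mass $\le 1$'' if $r<0$), whereas you sketch a justification from the boundedness of $w$ given by the no-arbitrage interval.
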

The correction has zero terminal data and source
$\tfrac12[a(S,T-\tau)-a(K,T-\tau)]S^2\partial_{SS}\Cref$.
In the $O(\sqrt\tau)$ near-strike band, the Lipschitz coefficient
difference is $O(\sqrt\tau)$ and carrier curvature is $O(K/\sqrt\tau)$;
Gaussian decay controls the tails.
The source is therefore bounded, giving \eqref{eq:valuebound}
after time integration.
A persistent strike-line mismatch can instead introduce an
$O(\sqrt\tau)$ contribution (Appendix~\ref{app:ra}).
\begin{proposition}[First-order skew profile and critical scaling]
\label{prop:profile}
Under the differentiated-profile assumptions in
Appendix~\ref{app:ra}, let $a_*=a(K,T)$ be the local variance at the strike and maturity, $b_*=K\partial_S a(K,T)$ its log-moneyness slope,
and $g_{a_*}$ the centered Gaussian density with variance $a_*$.
To resolve the shrinking $O(\sqrt{\tau})$ near-strike layer, we keep
the scaled log-moneyness $y=x/\sqrt{\tau}$ fixed.
For fixed $y$ and $S_\tau=Ke^{y\sqrt{\tau}}$,
\begin{equation}
\frac{W(S_\tau,T-\tau)}{K\tau}\longrightarrow \frac{b_*}{4}\,y\,g_{a_*}(y)\qquad(\tau\to0).
\label{eq:mainprofile}
\end{equation}
Along these paths, if $b_*y\ne0$, then $W/\chi^\alpha$ with
$\chi=\tau/T$ has a finite nonzero limit if and only if $\alpha=1$.
The corresponding profiles of the Delta and Gamma corrections,
$\partial_S W$ and $\partial_{SS}W$, are given in
\eqref{eq:profiles}. Thus the linear gate $\chi=\tau/T$ in (2.4) is the critical value-level rescaling near expiry.
\end{proposition}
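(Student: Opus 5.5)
The plan is to write the correction $W$ through a Duhamel formula and then pass to the parabolic scaling limit inside the integral. Subtracting the carrier equation from the pricing PDE shows that $W$ satisfies the local-volatility PDE with zero terminal data and source
\[
f(S,t)=\tfrac12\,[a(S,t)-a(K,t)]\,S^2\partial_{SS}\Cref(S,t),
\]
which is the source already identified after Proposition~\ref{prop:value}. In log-moneyness, the Feynman--Kac/Duhamel representation gives
\[
W(Ke^x,T-\tau)=\int_0^\tau e^{-r(\tau-u)}\,\E\bigl[f(Ke^{X_{\tau-u}},T-u)\bigr]\,du .
\]
Here $X$ is the log-price diffusion started at $x$ and run for time $\tau-u$. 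For $\tau$ small, three facts drive the argument. First, $S^2\partial_{SS}\Cref=S\varphi(d_+)/\sqrt{A_0}$ with $A_0(u)=a_*u+O(u^2)$, so $S^2\partial_{SS}\Cref\approx K\,g_{a_*u}(x)$. Second, by the differentiated-profile smoothness, $a(Ke^{z},t)-a(K,t)=b_*z+O(z^2+|z|\,|T-t|)$. Third, $X_{\tau-u}$ is close in law to $N(x,a_*(\tau-u))$: the drift is $O(\tau)$ and the coefficient variation is $O(\sqrt\tau)$ on the relevant scale.

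Replacing everything by these leading terms gives an explicit Gaussian computation. The key identity is
\[
\int z\,g_{a_*u}(z)\,g_{a_*(\tau-u)}(z-x)\,dz=\frac{xu}{\tau}\,g_{a_*\tau}(x),
\]
which follows from the product of Gaussian densities and the posterior mean. Hence the leading term is
\[
\tfrac12 b_*K\,x\,g_{a_*\tau}(x)\int_0^\tau\frac{u}{\tau}\,du=\frac{b_*}{4}K\tau\,x\,g_{a_*\tau}(x).
\]
Setting $x=y\sqrt\tau$ and using $g_{a_*\tau}(y\sqrt\tau)=g_{a_*}(y)/\sqrt\tau$ turns this into $\tfrac{b_*}{4}K\tau\,y\,g_{a_*}(y)$, which is \eqref{eq:mainprofile}.

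Next I control the remainders after dividing by $K\tau$ and changing variables $z=w\sqrt\tau$, $u=s\tau$. The $O(z^2)$ Taylor error contributes $O(\tau)\cdot g$ to the integrand, hence $o(1)$ after scaling. The discount factor, the drift $r$, the time variation of $a$ in $A_0$ (a relative $O(u)$ error), and the $x$-dependence of the diffusion coefficient of $X$ each give relative errors of order $O(\sqrt\tau)$. I handle the last of these with a parametrix or short-time heat-kernel comparison: the transition density of $X$ equals the frozen Gaussian up to a factor $1+O(\sqrt{\tau})$ with Gaussian-type bounds. Gaussian tails, together with the bounded-coefficient estimates underlying Proposition~\ref{prop:value}, provide an integrable dominating function in $(w,s)$, so dominated convergence yields the limit. \textbf{I expect this uniform heat-kernel comparison near the singular time $u\to0$ of the source to be the main obstacle.} There $g_{a_*u}$ concentrates, and the bound must be integrable in $s$. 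I would try first the convolution structure above, which already shows the singularity is integrable ($\int_0^1 s\,ds$), and combine it with standard Aronson-type bounds.

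The critical-scaling claim then follows directly. Since $W/\chi^\alpha=T^\alpha K\tau^{1-\alpha}\bigl(W/(K\tau)\bigr)$ and $W/(K\tau)\to\tfrac{b_*}{4}yg_{a_*}(y)\neq0$ when $b_*y\ne0$, the quotient tends to $0$ for $\alpha<1$, diverges for $\alpha>1$, and has a finite nonzero limit exactly when $\alpha=1$. For the Delta and Gamma profiles in \eqref{eq:profiles}, I differentiate the Duhamel representation in $S$, equivalently apply $\tau^{-1/2}\partial_y$ and $\tau^{-1}\partial_y^2$ to the scaled profile. Justifying the interchange of limit and derivatives uses the same dominated-convergence scheme with derivative heat-kernel bounds. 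This requires the stronger local smoothness assumed in Appendix~\ref{app:ra}. It gives $\partial_SW\approx\tfrac{b_*}{4}\sqrt\tau\,(yg_{a_*})'(y)$ and $\partial_{SS}W\approx\tfrac{b_*}{4K}(yg_{a_*})''(y)$, up to lower-order terms from the chain rule $S=Ke^{y\sqrt\tau}$.
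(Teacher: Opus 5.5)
Your value-level argument is correct, and it takes a different route from the paper. You keep the variable-coefficient transition density inside a Feynman--Kac/Duhamel integral, compare it with the frozen Gaussian by a parametrix, and pass to the limit by dominated convergence. The paper instead rescales the correction equation as $W_\varepsilon(y,s)=\varepsilon^{-2}w(\varepsilon y,\varepsilon^2s)$. It combines the value bound $|W_\varepsilon|\le Cs$ with interior Schauder estimates and compactness to get locally $C^{2,1}$-convergent subsequences, identifies the limit by uniqueness for $\partial_sW_0=\tfrac12a_*\partial_{yy}W_0+\tfrac12b_*yg_{a_*s}(y)$ with $W_0(y,0)=0$, and applies Duhamel, with the same Gaussian identity you use, only to that constant-coefficient problem. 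Your route can give explicit $O(\sqrt\tau)$ rates; the paper's avoids heat-kernel asymptotics and yields derivative convergence at no extra cost.

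The obstacle you flag is not the real one for the value profile. The source is uniformly bounded, $|f|\le C_fK$, exactly as in the proof of Proposition~\ref{prop:value}. So an $L^1$ comparison of densities in the additive form $|p-g|\le C\sqrt\tau\,\widetilde g_{ct}$ suffices; a uniform multiplicative factor $1+O(\sqrt\tau)$ is false in the tails. Your critical-scaling step is correct, and so are your formulas $G_1=\tfrac{b_*}{4}(yg_{a_*})'$ and $G_2=\tfrac{b_*}{4}(yg_{a_*})''$.

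The gap is in the Gamma profile. Differentiating twice in the starting point gives $w_{xx}=\int_0^\tau e^{-r(\tau-u)}\int\partial_{xx}p(\tau-u,x,z)\,f(z,u)K^{-1}\,dz\,du$. Use the standard bound $|\partial_{xx}p(t,x,z)|\le Ct^{-1}\widetilde g_{ct}(z-x)$ with a merely bounded source. After the scaling $u=s\tau$, $z=w\sqrt\tau$ and the $w$-integration, the dominating weight is of order $(1-s)^{-1}$, which is not integrable at $s=1$. So ``the same dominated-convergence scheme with derivative heat-kernel bounds'' fails here. It does work for Delta, where the weight is $(1-s)^{-1/2}$. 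Shifting both derivatives onto the source, as one could for a frozen Gaussian, fails at the other end, since $\partial_{zz}f=O(Ku^{-1})$ is not integrable at $s=0$.

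The missing idea is to balance the two singularities. Because $\int p(t,x,z)\,dz=1$ for every $x$, you may replace $f(z,u)$ by $f(z,u)-f(x,u)$ and use the spatial Lipschitz bound $|\partial_zf(\cdot,u)|\le CKu^{-1/2}$. This gives the integrable weight $C(1-s)^{-1/2}s^{-1/2}$. You then still need pointwise convergence of the rescaled kernel $\tau^{3/2}\partial_{xx}p$ to its Gaussian counterpart, which requires parametrix derivative estimates.

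Relatedly, ``equivalently apply $\tau^{-1/2}\partial_y$ and $\tau^{-1}\partial_y^2$ to the scaled profile'' is not an equivalence. Pointwise convergence of $W/(K\tau)$ controls no derivatives. That $C^2$ convergence is exactly what the paper's Schauder--compactness step supplies before it applies $W_S=e^{-x}w_x$ and $KW_{SS}=e^{-2x}(w_{xx}-w_x)$.
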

Boundedness of $R^\ast$ does not imply bounded spot derivatives:
along these paths, $\partial_S R^\ast=O(\tau^{-1/2})$ and
$\partial_{SS}R^\ast=O(\tau^{-1})$.
Manufactured-solution checks and the exponent ablation distinguish
these asymptotics from finite-budget accuracy
(Appendix~\ref{app:verification}; Section~\ref{sec:ablation}).

\paragraph{Error transfer through the output stages.}
Let $\rho_\theta=R_\theta-R^\ast$ be the correction error. On the
nonterminal domain $\Omega$ under consideration, define
$N_\rho:=\sup_{\Omega}\bigl(
|\rho_\theta|/K
+\sqrt{\chi}\,|\partial_S\rho_\theta|
+\chi K|\partial_{SS}\rho_\theta|
\bigr)$.
Since $z_\theta-V=\chi\rho_\theta$, a finite $N_\rho$ gives the
pre-FAL bounds
\begin{equation}
\frac{|z_\theta-V|}{K}\le\chi N_\rho,\qquad
|\partial_S z_\theta-\Delta|\le\sqrt{\chi}\,N_\rho,\qquad
K|\partial_{SS}z_\theta-\Gamma|\le N_\rho,
\label{eq:transfer}
\end{equation}
where $\Delta=\partial_S V$ and $\Gamma=\partial_{SS}V$.
These conditional bounds give $O(\chi)$ normalized price error
and $O(\sqrt{\chi})$ Delta error; they bound the $K$-scaled Gamma error
without requiring it to vanish.
These bounds are conditional on finite $N_\rho$, which finite-grid checks do not establish.
After FAL, price error includes smoothing bias, while Greek errors also
involve derivatives of the bounds and output map
(Section~\ref{sec:model}; Appendix~\ref{app:stages}).
Writing
$P_a:=\partial_\tau-\frac12 a(S,T-\tau)S^2\partial_{SS}
-rS\partial_S+r$
for the time-to-maturity pricing operator, the exact pre-FAL identity is
$P_a z_\theta=\chi P_a\rho_\theta+\rho_\theta/T$.
This connects correction error to the governing-equation defect without
requiring a PDE loss. The reported finite-grid residual RMS is insufficient to certify the required continuum regularity.
\section{Numerical Experiments}
\label{sec:experiments}
Principal comparisons use pre-PP outputs (FI after FAL), FD Greeks
and PDE diagnostics, and five-seed means; sample standard deviations
are reported in Appendix~\ref{app:attribution}.
Mechanism ablations use pre-FAL outputs; the PDE-loss factorial
uses automatic-differentiation (AD) diagnostics.

\subsection{Benchmark and metrics}
\label{sec:metrics}
\paragraph{Data and models.}
The fixed-$T=1$ family varies volatility level $\sigma_0$, spot
dependence $\beta$, time dependence $\gamma$, strike $K$, and rate $r$.
Fixed splits contain 512/32/32 training/validation/ID surfaces and
320 parameter-support OOD surfaces.
Sensors, inputs, and direct carrier access follow
Section~\ref{sec:model} and Appendix~\ref{app:data}.
All principal models have width 128, 258,305 parameters, and
validation-price-based checkpoint selection.
DeepONet is the unnormalized direct-price baseline; FI uses RA/FAL;
PI adds data, PDE, terminal, and boundary losses without RA/FAL.
PI's baseline collocation omits the shortest maturities and has no
near-strike enrichment (Appendix~\ref{app:training}).

\paragraph{Metrics.}
Price $\RelL$ pools the test grid, including maturity; price p95
is the 95th percentile of absolute error near the strike.
Near-strike means $|S/K-1|\le0.05$; ATM-short further restricts
$0<\tau\le0.05$.
FD Delta/Gamma p95 use nonterminal near-strike points on the
24-surface higher-resolution L3 subset.
The common dimensionless FD PDE-residual RMS uses interior points,
with terminal prices entering its last time stencil.
Refining the reference preserves the FI--PI ordering, although
shortest-maturity Gamma remains resolution-sensitive
(Appendices~\ref{app:metrics}, \ref{app:verification}).

\subsection{Main results}
\label{sec:main_results}

FI-DeepONet achieves six- to eightfold lower mean global price
error than the vanilla and PI baselines, with lower FD Greek
p95 errors (Table~\ref{tab:classical}A; Figure~\ref{fig:classical}a).
Despite price-only supervision, it also has the lowest common FD
PDE-residual RMS.
Lower residuals alone do not predict better prices: PI improves
this diagnostic over DeepONet but worsens price error.
The FI--PI gap persists under the predeclared collocation and
boundary-target sensitivity studies
(Appendices~\ref{app:pifairness}, \ref{app:piboundary}). Figures~\ref{fig:classical}c--e show the corresponding
shortest-maturity error profiles and local error ratios,
complementing these pooled metrics.

\begin{figure}[!htbp]
\centering
\includegraphics[width=\linewidth]{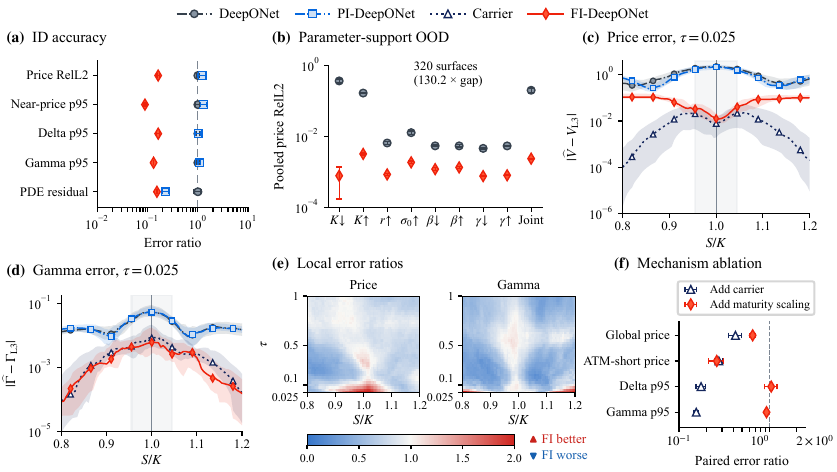}
\caption{Classical evaluation before PP.
(a) ID error ratios to DeepONet. (b) Complete parameter-support OOD set.
(c,d) Price/Gamma errors at $\tau=0.025$: medians and interquartile
ranges over 24 L3 surfaces and five seeds.
(e) Local error ratios; red favors FI.
(f) Pre-FAL paired carrier (B0$\to$A0) and rescaling (A0$\to$A1) effects.
Protocols: Appendix~\ref{app:metrics}.}
\label{fig:classical}
\end{figure}

\begin{table}[!t]
\caption{
Principal pre-PP results.
A: ID (32 surfaces); B: component ablation;
C: parameter-shift OOD (320 surfaces).
Five-seed means are shown unless deterministic; SDs are reported in
Appendices~\ref{app:attribution} and~\ref{app:ood}.
Bold: lowest mean.
}
\label{tab:classical}
\centering
\scriptsize
\setlength{\tabcolsep}{1.2pt}
\renewcommand{\arraystretch}{0.90}

\begin{minipage}[t]{0.56\linewidth}
\vspace{0pt}
\begin{tabular*}{\linewidth}{@{\extracolsep{\fill}}lrrr@{}}
\toprule
\multicolumn{4}{@{}l}{\textbf{A. In-distribution (32 surfaces)}}\\
Metric & DeepONet & PI & FI\\
\midrule
Price ($\times10^{-3}$)
& 5.99
& 7.24
& \textbf{0.965}\\
Near p95
& 1.80
& 2.33
& \textbf{0.158}\\
Delta p95
& 0.152
& 0.155
& \textbf{0.0245}\\
Gamma p95
& 0.0332
& 0.0355
& \textbf{0.00434}\\
PDE RMS
& 0.0461
& 0.0105
& \textbf{0.00703}\\
\bottomrule
\end{tabular*}
\end{minipage}
\hfill
\begin{minipage}[t]{0.42\linewidth}
\vspace{0pt}
\begin{tabular*}{\linewidth}{@{\extracolsep{\fill}}lrr@{}}
\toprule
\multicolumn{3}{@{}l}{\textbf{B. Component ablation}}\\
Configuration & Price ($\times10^{-3}$) & Near p95\\
\midrule
Normalized
& 7.6
& 2.26\\
+ soft penalties
& 7.5
& 2.21\\
+ FAL
& 2.4
& 0.528\\
FI (RA+FAL)
& \textbf{0.965}
& \textbf{0.158}\\
Carrier only
& 6.89
& 0.393\\
\bottomrule
\end{tabular*}
\end{minipage}

\vspace{1pt}

\begin{tabular*}{\linewidth}{@{\extracolsep{\fill}}lrr@{}}
\toprule
\multicolumn{3}{@{}l}{\textbf{C. Parameter-shift OOD (320 surfaces)}}\\
& DeepONet & FI\\
\midrule
Price ($\times10^{-3}$)
& 197
& \textbf{1.52}\\
\bottomrule
\end{tabular*}

\end{table}

\subsection{Ablation study}
\label{sec:ablation}

\paragraph{Carrier and rescaling.}
FAL improves Normalized DeepONet, and full FI further improves both
price metrics (Table~\ref{tab:classical}B).
A five-seed, 7,000-update matched study compares B0 (direct-price
control), A0 (carrier without rescaling), and A1 (full RA).
Before FAL, adding the carrier lowers global price and near-strike
Delta/Gamma errors in every paired seed.
Linear rescaling then improves global and ATM-short price accuracy,
cutting the latter error by about three quarters, but slightly
worsens near-strike Delta p95
(Figure~\ref{fig:classical}f; Appendix~\ref{app:mechanism_ablation}).

\paragraph{Exponent and representation controls.}
For the rescaling exponent $\alpha$ in the generalized
reconstruction
$z_{\theta,\alpha}=C^{\rm ref}+g_\alpha(\chi)R_\theta$,
with $g_0(\chi)=1$ and $g_\alpha(\chi)=\chi^\alpha$ for
$\alpha>0$, we test
$\alpha\in\{0,\frac12,1,\frac32\}$.
The lowest mean post-FAL Gamma p95 occurs at $\alpha=1$,
while price error favors $\alpha=\frac12$.
Thus asymptotic criticality does not imply metric-wise training
optimality.
A separate three-seed study favors RA over the tested coordinate
and sampling controls, but global low-rank reconstruction is worse
for both correction targets than for full prices.
The evidence supports local cancellation of the near-expiry singular
structure; rank diagnostics show no reduction in global linear
complexity (Appendices~\ref{app:mechanism_ablation}, \ref{app:attribution}).

\FloatBarrier
\subsection{Effect of PDE loss and limited data}
\label{sec:physics}
The five-seed normalized-RA factorial separates the PDE-loss intervention from representation.
Without FAL, adding the PDE loss lowers AD residual RMS and both Greek MAEs in every seed, but improves price error in only three.
With FAL, it lowers residuals while worsening price error in all five
seeds: optimizing the PDE loss and price accuracy need not improve together.
Separate limited-data studies show benefits from PDE loss
at 25\% data and from FAL at every tested fraction;
financial penalties add modest gains to FAL at the smallest fractions.
These are development results, not changes to the final price-only
FI objective (Appendix~\ref{app:objectives}).

\subsection{Out-of-distribution generalization}
\label{sec:ood}
Across all 320 parameter-shift surfaces, FI's pooled price error is
about two orders of magnitude below DeepONet's by the ratio of
five-seed means (Table~\ref{tab:classical}C; Figure~\ref{fig:classical}b).
This tests the complete predictor within the declared functional family.
The historical OOD output convention precludes a same-protocol
ID/OOD degradation ratio (Appendix~\ref{app:ood}).

To test beyond this coefficient family, we use carrier-null
off-family perturbations that vanish at the strike, leaving
the carrier unchanged.
The selected FI checkpoint responds but does not accurately reproduce
the reference directional price response or its spot derivatives,
despite agreement between AD and FD of its own predictions.
A separate three-seed multi-family study improves these responses
with derivative supervision at a cost in price accuracy
(Appendix~\ref{app:functional}).
\FloatBarrier
\section{Quantum-Compatible Realizations and Market Evaluation}
\label{sec:downstream}

We study quantum compatibility and market-derived inputs as downstream
evaluations of the trained FI operator, with the RA/FAL construction fixed.

\subsection{Quantum-Compatible Realizations}
\label{sec:quantumrealizations}
\paragraph{Setup.}
Unlike Quantum DeepONet \citep{r15}, we start from a pretrained dense
FI parent.
QO exactly factors selected maps into orthogonal rotations and a
classical signed core, without adaptation.
QDO approximates eligible maps by $\mathbf W\approx DQ_{\rm circ}$,
with diagonal $D$ and restricted orthogonal
$Q_{\rm circ}\in\mathcal Q_{\rm circ}$.
Full QDO adapts all trainable parameters using pre-FAL price regression;
a fixed composite score selects checkpoints and coverage.
The 410/102 fitting/selection split comes entirely from the original
synthetic training set, and all candidates share one parent.
Cores, scalings, biases, nonlinearities, carrier, and FAL remain
classical (Appendices~\ref{app:qo}--\ref{app:qselection}).

\vspace{-10pt}
\begin{table}[!htbp]
\caption{
Selected C94 realization versus its fixed FI parent.
A: MAC coverage and update scope;
B: separate 32-surface AD synthetic evaluation;
C: 2,586 held-out-IV quotes.
Single selected instance, not seed averages; bold: lower error.
}
\label{tab:quantum}
\centering
\small

\begin{tabularx}{\linewidth}{Yrr}
\toprule
\multicolumn{3}{l}{\textbf{A. Realization scope}}\\
\midrule
Property & QO & QDO (C94)\\
Dense-map MAC coverage & $49.6\%$ & $92.8\%$\\
Parameters updated & None & All trainable\\

\midrule
\multicolumn{3}{l}{\textbf{B. Paired synthetic accuracy}}\\
\midrule
Metric & Parent FI-DeepONet & QDO (C94)\\
Global price $\RelL$ ($\times10^{-3}$)
& $\mathbf{0.837}$ & $1.77$\\
Near-strike price p95 error
& $\mathbf{0.1315}$ & $0.1324$\\
Global AD Delta p95 error
& $\mathbf{0.0127}$ & $0.0148$\\
Global AD Gamma p95 error
& $0.00244$ & $\mathbf{0.00224}$\\

\midrule
\multicolumn{3}{l}{\textbf{C. Market quotes with held-out implied volatility}}\\
\midrule
Metric & Parent FI-DeepONet & QDO (C94)\\
Market RMSE (quoted price units)
& $19.54$ & $\mathbf{14.92}$\\
Operator $\RelL$ ($\times10^{-3}$)
& $13.18$ & $\mathbf{5.48}$\\
\bottomrule
\end{tabularx}
\end{table}
\vspace{5pt}

\paragraph{Fidelity and adaptation.}
QO reproduces parent predictions to numerical precision in idealized
evaluation.
At the selected C94 coverage, QDO roughly doubles global price error;
near-strike price p95 changes little, Delta p95 worsens, and Gamma
p95 improves (Table~\ref{tab:quantum}A,B).
The selected C94 instance fails the fixed ID/OOD accuracy-preservation criteria (Appendix~D.5).
These synthetic evaluations use AD Greeks under a protocol separate from the principal FD study.
In the three-seed matched C94 study, updating only substituted maps
and their biases achieves almost all frozen-to-full error recovery
(Figure~\ref{fig:quantum}a).
Across paired C49--C99 configurations, mean price errors remain near
$1.8\times10^{-3}$ without a monotone coverage trend
(Figure~\ref{fig:quantum}b; Appendix~\ref{app:qfrontier}).

\paragraph{Resource scope.}
QDO's lower listed errors relative to historical classical structured
approximations of comparable computational cost are descriptive because
their adaptation protocols are not matched
(Appendix~\ref{app:qfidelity}); the comparison does not isolate the effect
of the DQ parameterization.
Computational cost here follows the arithmetic/storage accounting convention
for such comparisons, rather than measured runtime.
Whole-dense-map coverage $\omega$ counts reference dense-layer
multiply--accumulate operations (MACs), while eligible-map coverage $\rho$
uses a different denominator; both are static analytical coverage measures,
and the saved partial implementation computes both dense and structured outputs.
Figure~\ref{fig:quantum}c reports the resulting conditional cost scenarios
analytically, without measuring hardware speedup.
A complete end-to-end assessment would further require accounting for
preparation, measurement, precision, and the retained classical workload
\citep{r13} (Appendix~\ref{app:qresources}).

\begin{figure}[!htbp]
\centering
\includegraphics[width=\linewidth]{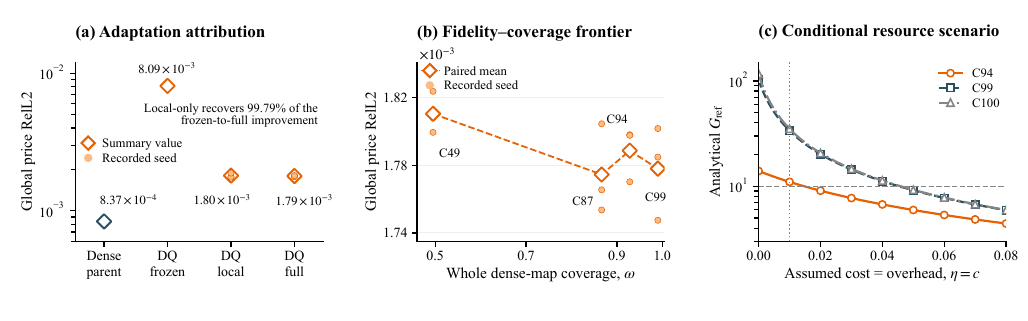}
\caption{
Quantum-compatible evaluation.
(a) C94 adaptation attribution over three paired seeds.
(b) Paired C49--C99 fidelity--coverage frontier; zoomed vertical scale.
(c) Conditional
$G_{\rm ref}=[1-\omega(1-\eta)+c]^{-1}$ with $\eta=c$.
No hardware speedup is measured.
}
\label{fig:quantum}
\end{figure}

\FloatBarrier

\subsection{Market Evaluation with Constant-Volatility Inputs}
\label{sec:marketinputs}

\paragraph{Setup.}
We evaluate Nasdaq-100 (NDX) call quotes from 13 May to 7 August 2026.
Implied volatility (IV) is a constant-volatility parameter inferred
from a quote under a pricing model.
Own-IV uses the target's supplied IV; held-out-IV reconstructs it
from other strikes on the same date and expiry, excluding the target
from the fit.
Inputs set $\beta=\gamma=0$ and use constant-volatility sensors.
Under the effective-spot mapping in Appendix~E,
$\beta=\gamma=0$ makes the supplied volatility field constant in
spot and time, so the same-input exact solution is the
Black--Scholes carrier:
$V=C^{\rm ref}$ and $W=R^*=0$ for $\tau>0$.
The fixed-$T=1$ network retains actual remaining maturity
through the effective-spot and contract-scaling map in
Appendix~\ref{app:market}.
Operator error uses the same-input Black--Scholes reference;
market error uses the bid--offer midpoint.
Weights and normalizers remain fixed, and all predictions precede PP.

\paragraph{Results.}
FI improves over both normalized neural baselines in both IV settings,
but not the same-input Black--Scholes reference
(Table~\ref{tab:market}).
On held-out IV, its five-seed mean market relative RMSE is about
thirteen times lower than Normalized DeepONet's and three times
lower than the FAL-only baseline's.
Separate diagnostics place FI's median market error at about
one half-spread, with larger tail errors.
Figure~\ref{fig:market}a visualizes market quotes against the
frozen FI surface on one held-out date, while
operator-error checks remain stable across the tested IV
reconstructions and late dates (Figure~\ref{fig:market}b).

Despite the synthetic fidelity loss above, across C49--C99,
all twelve paired coverage--adaptation--seed comparisons
reduce market RMSE against the same fixed FI parent.
Full-sample geometric means indicate roughly $20$--$25\%$ reductions; all four coverages also improve on the sealed late-date subset.
The reported date/chain bootstrap intervals lie below one conditional
on the trained realizations
(Figure~\ref{fig:market}c; Appendix~\ref{app:marketpaired}).
The error decomposition supports sample-dependent price-error alignment
as the explanation for these reductions, without evidence of a
quantum-specific regularization effect.
The study evaluates constant-volatility inputs; local-volatility
calibration is outside its scope, and incomplete upstream rate/dividend
and contract records limit source-to-model reproduction
(Appendix~\ref{app:marketprovenance}).
\begin{figure}[!htbp]
\centering
\includegraphics[width=\linewidth]{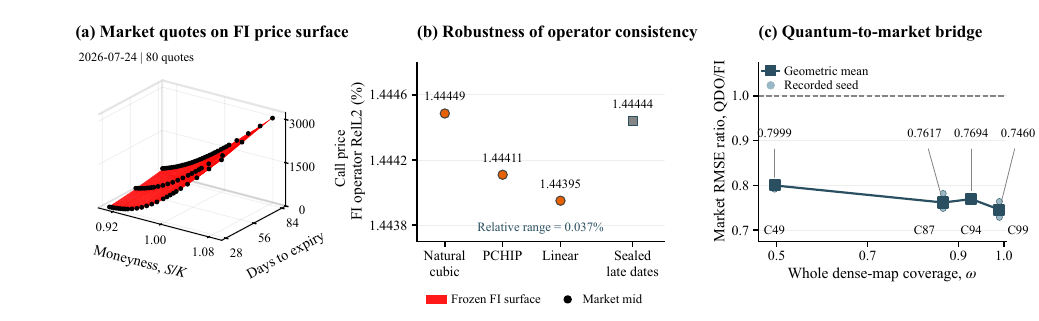}
\caption{
Frozen-network market evaluation before PP.
(a) FI price surface on 2026-07-24 with 80 market mid-quotes.
(b) Same-input Black--Scholes operator error under IV reconstruction
and late-date checks; zoomed vertical scale.
(c) Paired C49--C99 QDO/FI market-RMSE ratios;
squares show three-seed geometric means and circles recorded seeds.
Protocols: Appendix~E.
}
\label{fig:market}
\end{figure}
\FloatBarrier
\enlargethispage{1.2\baselineskip}
\vspace{-16pt}
\section{Limitations and Conclusion}
\label{sec:conclusion}
\vspace{-2pt}
FI-DeepONet shows that known structure of a parabolic pricing problem can
enter an operator network through its representation and output map, rather
than through additional loss terms. A carrier matched to the strike-line integrated variance captures the
leading near-expiry singular structure. The network learns a time-rescaled correction whose exact target is
bounded under the stated assumptions; FAL enforces pointwise price bounds.
Trained on prices alone, the model lowers global price error six- to
eightfold relative to vanilla and physics-informed DeepONet, and it also
reduces errors in Delta and Gamma, the first and second spot derivatives
of the price, and in the PDE residual. Ablations attribute these gains to the
carrier and the rescaling, with trade-offs that depend on the metric.
Selected layers can be compiled exactly into orthogonal circuits, or
approximated within a restricted family and then adapted. The frozen model
also stays consistent on market-derived constant-volatility inputs.
These conclusions are limited to one low-dimensional coefficient family,
fixed sensors, and a fixed horizon; off-family tests leave general
functional extrapolation unresolved.
Theory and matched ablations support local cancellation of the leading
near-expiry singular structure, while rank diagnostics show no reduction
in global linear complexity.
Fixed-width FAL retains near-terminal bias and a spot kink; PP repairs
only a discrete grid. Neither step certifies an arbitrage-free continuous
surface.
Quantum results are classical or idealized, so hardware speedup remains
unmeasured.
Market tests concern constant-volatility inputs; same-input
Black--Scholes remains more accurate, and source-to-model provenance is
incomplete.
Future work includes terminal-scaled FAL, broader coefficient families,
and end-to-end resource measurements.
\label{maintext-end}
\clearpage
\subsection*{AI use statement}
Generative AI tools were used to assist literature and source discovery;
research ideation and experimental-methodology refinement; implementation,
debugging, and execution of numerical analysis and evaluation workflows;
mathematical derivation, proof drafting, and proof checking; analysis and
interpretation of results; manuscript organization, section drafting, language
editing, and consistency checks; and development of the LaTeX code for
scientific figures. This includes assistance with mathematical claims and their
proofs, rather than language polishing alone. The synthetic numerical datasets
and reference labels used in the experiments were generated by analytical
formulas or finite-difference solvers under specified parameterized protocols;
generative AI was not used to synthesize observations, numerical labels, or
replacement benchmark data. AI tools also assisted preparation and auditing of
the submission. AI-assisted calculations and consistency checks are not
independent validation of a proof or an empirical claim. The authors retain
responsibility for the final text, mathematical arguments, experimental design,
numerical results, citations, code, and data-use permissions, including all
AI-assisted content.

\subsection*{Reproducibility statement}
Appendix~\ref{app:theory} states the assumptions, proofs, notation, and
output-stage qualifications. Appendix~\ref{app:protocols} specifies the fixed
data, inputs, architecture, optimization, checkpoint selection, masks, and
metric units. Appendix~\ref{app:results} reports additional numerical results,
reference checks, ablations, and sensitivity studies.
Appendix~\ref{app:quantum} distinguishes exact selected-map compilation from
adapted diagonal--orthogonal approximation and conditional resource accounting.
Appendix~\ref{app:market} defines the market-input mapping, target-excluded IV
reconstruction, evaluation populations, and remaining provenance limitations.
Appendix~\ref{app:context} summarizes the replay scope across the
historical synthetic records, the separate five-seed E0 refit, and the
complete-population OOD replay.
Reproducibility claims concern fixed realized data, documented selected
states, and explicitly identified refits and replays; a fresh retraining
of every historical principal run has not been performed.
Some split-generation, circuit-topology, and market-feature provenance
remains incomplete, and licensed quote-level market records cannot be
redistributed.

\subsection*{Ethics statement}
The models are research approximations and have not been validated for
deployment in trading or risk management. Pointwise price bounds, discrete
grid repairs, and finite numerical checks do not establish reliable hedging
or continuous-surface absence of arbitrage. The market study evaluates
constant-volatility inputs and does not establish local-volatility calibration
or superiority to the same-input Black--Scholes reference. The quantum-compatible
studies do not demonstrate measured hardware speedup. The market data are
licensed; any artifact release must respect applicable access and redistribution
permissions. The manuscript and figure-source package does not confer rights
to redistribute the underlying market records.
\bibliographystyle{plainnat}
\bibliography{references}
\clearpage
\appendix
\FloatBarrier
\section{Analytical details and proofs}
\label{app:theory}
\usepgfplotslibrary{groupplots}
\definecolor{appGray}{HTML}{536273}
\definecolor{appBlue}{HTML}{0066CC}
\definecolor{appNavy}{HTML}{24386B}
\definecolor{appRed}{HTML}{F02B18}
\definecolor{appOrange}{HTML}{D87A18}
\pgfplotsset{appaxis/.style={
  font=\footnotesize,axis lines=left,tick align=outside,
  axis line style={black,line width=0.5pt},
  tick style={black,line width=0.4pt},
  label style={font=\footnotesize},
  title style={font=\footnotesize,align=left,at={(0,1.08)},anchor=south west},
  legend style={font=\scriptsize,draw=gray!30,fill=white,inner sep=3pt},
  legend cell align=left,
  every axis plot/.append style={line width=1pt,mark size=2pt},
  scaled ticks=false,clip=true,
  grid=major,grid style={gray!15,line width=0.3pt},
  tick label style={font=\scriptsize},
},appsmall/.style={appaxis,width=3.05cm,height=3.6cm,scale only axis},
apphalf/.style={appaxis,width=5.25cm,height=4cm,scale only axis}}
\tikzset{appbox/.style={draw=appNavy!70,fill=appNavy!3,rounded corners=2pt,
  line width=0.6pt,align=center,inner sep=5pt,font=\footnotesize},
appflow/.style={-{Latex[length=2mm]},line width=0.75pt,draw=appGray},
appnote/.style={font=\scriptsize,align=center,text=appGray}}

\raggedbottom
\makeatletter
\setlength{\@fptop}{0pt}
\setlength{\@fpsep}{12pt}
\setlength{\@fpbot}{0pt plus 1fil}
\makeatother
The analysis separates properties of the pricing operator from properties of the implemented predictor. Appendices~\ref{app:scale} and~\ref{app:fal} establish scale symmetry of the exact pricing operator and pointwise admissibility of the scalar output map; Appendices~\ref{app:ra} and~\ref{app:stages} analyze the correction and its transfer through the output stages. Appendix~\ref{app:scaledfal} gives a terminal-scaled smoothing construction whose empirical performance is not evaluated here.
\setcounter{subsection}{-1}
\subsection{Financial terminology}
\label{app:finance}

The terms below follow the pricing conventions used in this paper
\citep{r01,r02,r03}.
They distinguish the financial contract, its PDE inputs,
and the market-data quantities.

\begingroup
\footnotesize
\setlength{\tabcolsep}{4pt}
\renewcommand{\arraystretch}{1.12}

\begin{tabularx}{\linewidth}{@{}>{\raggedright\arraybackslash}p{0.25\linewidth}Y@{}}
\toprule
Term & Meaning in this paper \\
\midrule

Underlying / spot $S$
&
The asset on which the option is written;
$S$ is its current price.
\\

European call
&
A contract giving the holder the right, but not the obligation,
to buy the asset at price $K$ only at expiry $T$.
\\

Strike $K$
&
The purchase price fixed by the option contract.
\\

Maturity $T$ / remaining time $\tau$
&
$T$ is the expiry time;
$\tau=T-t$ is the time left from valuation time $t$.
\\

Payoff
&
The cash value at expiry:
$(S-K)^+=\max(S-K,0)$ for the call considered here.
\\

Risk-free rate $r$
&
The constant continuously compounded rate used for discounting;
the discounted strike is $Ke^{-r\tau}$.
\\

Local volatility $\sigma$ / variance $a$
&
$\sigma(S,t)$ sets the instantaneous scale of proportional price
fluctuations; $a(S,t)=\sigma^2(S,t)$.
\\

Strike-line integrated variance $A_0$
&
The time integral of the input local variance evaluated at $S=K$,
as in \eqref{eq:strikeline}; not the integral of $\sigma$.
\\

Delta $\Delta$ / Gamma $\Gamma$
&
The first and second price derivatives in $S$:
$\partial_S V$ and $\partial_{SS}V$.
They describe sensitivity and curvature.
\\

Hedging
&
Taking offsetting positions to reduce financial risk;
accurate price sensitivities are relevant to this task.
\\

Moneyness / at the money (ATM)
&
The synthetic benchmark uses $m=S/K$, with ATM near $m=1$.
Market tests use their stated forward-based convention.
\\

No-arbitrage bounds
&
Necessary pointwise price restrictions \eqref{eq:bounds};
satisfying them alone does not ensure cross-spot or
cross-contract consistency.
\\

Implied volatility (IV)
&
The constant volatility that matches an observed option price
in a specified pricing formula; it is not a local-volatility field.
\\

Calibration
&
Inferring pricing-model parameters or a volatility function
from market quotes; distinct from evaluating a trained
pricing surrogate.
\\

Dividend yield $q$ / forward $F$
&
The market-input convention uses $F=Se^{(r-q)\tau}$ and
effective spot $Se^{-q\tau}$;
see Appendix~\ref{app:market}.
\\

Bid, offer, mid, spread
&
Bid is the price quoted by a buyer; offer is the price quoted
by a seller. Mid is their average, and spread is offer minus bid.
\\

\bottomrule
\end{tabularx}

\endgroup
\paragraph{Notation shared with the main text.}
We reserve $a=\sigma^2$ for local variance, $x=\log(S/K)$ for
log-moneyness, $m=S/K$ for moneyness, $\bar t=t/T$ for normalized
elapsed time, and $\chi=\tau/T=1-\bar t$ for remaining-life fraction.
$W=V-\Cref$ is the exact price correction; $R^*=W/\chi$ and
$R_\theta=Kr_\theta$ have price units, while
$r_\theta=\mu+s_yf_\theta$ is dimensionless.
$z_\theta$ denotes the reconstructed pre-FAL price and
$\pP$ the terminal-corrected FI price before optional PP.
The Greek errors refer to physical-spot derivatives.
Bold $\mathbf W$ in Appendix~\ref{app:quantum} denotes a weight matrix,
not the scalar correction $W$.
The exponent $\alpha$ always denotes maturity rescaling; coverage uses
$\rho$ and $\omega$.
The main-text $P_a$ and appendix $\Pa$ denote the same pricing operator
in \eqref{eq:pde}.

\FloatBarrier
\subsection{Pricing, normalization, and exact scale symmetry}
\label{app:scale}
For a zero-dividend asset, the risk-neutral dynamics are
$dS_u=rS_u\,du+S_u\sigma(S_u,u)\,dW_u$, with $a=\sigma^2$.
Risk-neutral valuation is
\begin{equation}
V_a^K(S,t)=e^{-r(T-t)}\E^{\mathbb Q}\big[(S_T-K)^+\mid S_t=S\big].
\label{eq:riskneutral}
\end{equation}
The backward equation is $V_t+\tfrac12a(S,t)S^2V_{SS}+rSV_S-rV=0$.
With $v(S,\tau)=V(S,T-\tau)$, define
\begin{equation}
\Pa=\partial_\tau-\Aa,\qquad
\Aa=\tfrac12a(S,T-\tau)S^2\partial_{SS}+rS\partial_S-r.
\label{eq:pde}
\end{equation}
Then $\Pa v=0$ and $v(S,0)=\relu{S-K}$.
Whenever $\Pa$ acts on $V$, $\Cref$, or their corrections, these functions are understood in time-reversed coordinates.
Delta and Gamma are $\Delta=V_S$ and $\Gamma=V_{SS}$.
Classical Gamma at the terminal strike is not an evaluation target.
Cross-strike and calendar-spread restrictions require consistent contract families; they are distinct from the pointwise interval in \eqref{eq:bounds}.
The operator-learning target is $(a,K,T,r)\mapsto V_a^K(\cdot,\cdot)$, with function-valued $a$.
The finite-dimensional sampled family in Appendix~\ref{app:data} is narrower than this mathematical input class.
These pricing formulations follow the classical risk-neutral and local-volatility constructions \citep{r01,r02,r03,r04}.

For $m=S/K$, $\bar t=t/T$, $c(m,\bar t)=V(Km,T\bar t)/K$, the equation and sensitivities are
\begin{equation}
c_{\bar t}+T\left[\tfrac12a(Km,T\bar t)m^2c_{mm}+rmc_m-rc\right]=0,
\qquad \Delta=c_m,\quad \Gamma=K^{-1}c_{mm}.
\label{eq:normalpde}
\end{equation}
The variable $\chi=\tau/T=1-\bar t$ is remaining, not elapsed, normalized time. All reported synthetic training uses $T=1$; normalization alone is not an unrestricted maturity-invariance result.

\begin{proposition}[Joint spot--strike--field symmetry]
For $\lambda>0$, set $a_\lambda(S,u)=a(S/\lambda,u)$. Then
\begin{equation}
V_{a_\lambda}^{\lambda K}(\lambda S,t)=\lambda V_a^K(S,t),\quad
\Delta_{a_\lambda}^{\lambda K}(\lambda S,t)=\Delta_a^K(S,t),\quad
\Gamma_{a_\lambda}^{\lambda K}(\lambda S,t)=\lambda^{-1}\Gamma_a^K(S,t),
\label{eq:scale}
\end{equation}
where the derivatives exist.
\end{proposition}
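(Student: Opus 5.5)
The plan is to prove the price identity first and obtain the Greek identities by differentiating it. Two routes are available: through the risk-neutral representation \eqref{eq:riskneutral}, or through the backward equation \eqref{eq:pde} together with uniqueness. I would use the probabilistic route as the main argument and the PDE route as a cross-check.

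\textbf{Probabilistic route.} Let $S_u$ solve $dS_u=rS_u\,du+S_u\sigma(S_u,u)\,dW_u$ with $S_t=S$, and set $\tilde S_u=\lambda S_u$. Then
\[
d\tilde S_u=r\tilde S_u\,du+\tilde S_u\,\sigma(\tilde S_u/\lambda,u)\,dW_u .
\]
Since $\sigma_\lambda(\tilde S,u):=\sqrt{a_\lambda(\tilde S,u)}=\sigma(\tilde S/\lambda,u)$, the process $\tilde S$ has exactly the dynamics of the model with field $a_\lambda$, started at $\lambda S$.

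Under the well-posedness assumptions, the SDE for the field $a_\lambda$ has a pathwise unique (hence law-unique) solution. These are the same assumptions used for the value estimate: $a$ bounded above and below and Lipschitz in $x=\log(S/K)$. They transfer to $a_\lambda$ because $\log(\tilde S/\lambda K)=\log(S/K)$. Law uniqueness then gives
\[
V_{a_\lambda}^{\lambda K}(\lambda S,t)=e^{-r(T-t)}\E\bigl[(\lambda S_T-\lambda K)^+\bigr]=\lambda V_a^K(S,t),
\]
using positive homogeneity of $(\cdot)^+$.

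\textbf{PDE cross-check.} Set $w(S,\tau)=\lambda^{-1}v_{a_\lambda}^{\lambda K}(\lambda S,\tau)$. By the chain rule, $S^2\partial_{SS}w$ and $S\partial_S w$ equal $\lambda^{-1}$ times the corresponding $\lambda S$-scaled expressions. The coefficient becomes $a_\lambda(\lambda S,\cdot)=a(S,\cdot)$, so $\Pa w=0$. The initial data are $w(S,0)=\lambda^{-1}(\lambda S-\lambda K)^+=(S-K)^+$. Uniqueness of the solution in the growth class $0\le v\le S$ implied by \eqref{eq:bounds} then yields $w=v_a^K$.

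\textbf{Greeks.} Differentiate the identity $V_{a_\lambda}^{\lambda K}(\lambda S,t)=\lambda V_a^K(S,t)$ in $S$ wherever the derivatives exist. One derivative gives $\lambda\,\Delta_{a_\lambda}^{\lambda K}(\lambda S,t)=\lambda\,\Delta_a^K(S,t)$. Two derivatives give $\lambda^2\,\Gamma_{a_\lambda}^{\lambda K}(\lambda S,t)=\lambda\,\Gamma_a^K(S,t)$. Dividing through yields \eqref{eq:scale}.

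\textbf{Main obstacle.} The only delicate step is the uniqueness input: law uniqueness for the SDE, or uniqueness in the correct growth class for the PDE. Either one must hold for the scaled field under the paper's stated assumptions. I would settle this by noting that the map $S\mapsto\lambda S$ is a shift $x\mapsto x$ in log-moneyness relative to the scaled strike, so every hypothesis is literally invariant.
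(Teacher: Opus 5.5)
Your proposal is correct and follows essentially the same route as the paper: you scale the process to $\widetilde S_u=\lambda S_u$, use $a_\lambda(\widetilde S_u,u)=a(S_u,u)$ and the positive homogeneity of the payoff, invoke uniqueness to obtain the price identity, and differentiate in physical spot to get the Delta and Gamma identities. Your PDE cross-check and your remark that the hypotheses are invariant in log-moneyness relative to the scaled strike make explicit the uniqueness step that the paper only names.
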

\begin{proof}
If $S_u$ solves the original SDE, then $\widetilde S_u=\lambda S_u$ solves the transformed model because $a_\lambda(\widetilde S_u,u)=a(S_u,u)$. Its payoff is $(\widetilde S_T-\lambda K)^+=\lambda(S_T-K)^+$. Discounted expectation and uniqueness give the price identity. Differentiating with respect to physical spot gives the Delta and Gamma identities.
\end{proof}
A trained network inherits this identity only if its inputs and normalization statistics transform compatibly. The implementation retains absolute payoff samples and the unnormalized $K$, with fixed training standardization. If $\operatorname{Std}(y)=(y-\mu_{\mathrm{tr}})/s_{\mathrm{tr}}$, then
\begin{equation}
\operatorname{Std}(\lambda y)-\operatorname{Std}(y)=(\lambda-1)y/s_{\mathrm{tr}}.
\end{equation}
Thus the implemented model is scale-aware, not exactly equivariant. Equation~\eqref{eq:scale} is an operator property, not a learned OOD theorem.

\FloatBarrier
\subsection{Admissibility, stability, and units}
\label{app:fal}
For $\delta>0$ and $\ell<u$, define the softplus map and the Financial Admissibility Layer using stable softplus evaluation:
\begin{equation}
\spdelta(z)=\delta\log(1+e^{z/\delta}),\qquad
\FAL_\delta(z;\ell,u)=\ell+\spdelta(z-\ell)-\spdelta(z-u).
\label{eq:fal}
\end{equation}
We use dimensionless smoothing $\delta_v=0.002$, corresponding to $K\delta_v$ in price units. The complete FI output is
\begin{equation}
\pP(S,t)=
\begin{cases}
0,&S=0,\\
(S-K)^+,&t=T,\\
\FAL_{K\delta_v}(z_\theta;L,U),&S>0,\ t<T.
\end{cases}
\label{eq:fi}
\end{equation}
This is the post-FAL, terminal-corrected output before PP.
Historical continuation training applied ordinary FAL even at terminal nodes; \eqref{eq:fi} was evaluated later as a parameter-free analytic assignment using unchanged selected weights, no held-out outcomes, and no retraining (Appendix~\ref{app:training}).
Appendix~\ref{app:scaledfal} studies a time-scaled smoothing alternative, but it was not trained here.
\begin{proposition}[Scalar FAL properties]
For $\ell<u$, finite $z$, and $\delta>0$, the map in \eqref{eq:fal} satisfies
\begin{gather}
\ell<\FAL_\delta(z;\ell,u)<u,\qquad 0<\partial_z\FAL_\delta<1,\label{eq:falrange}\\
|\FAL_\delta(z_1;\ell,u)-\FAL_\delta(z_2;\ell,u)|\le|z_1-z_2|,\label{eq:fallip}\\
|\FAL_\delta(z;\ell,u)-P_{[\ell,u]}(z)|\le\delta\log2,\qquad
\FAL_{\lambda\delta}(\lambda z;\lambda\ell,\lambda u)=\lambda\FAL_\delta(z;\ell,u).
\label{eq:falproject}
\end{gather}
For $v\in[\ell,u]$, $|\FAL_\delta(z;\ell,u)-v|\le|z-v|+\delta\log2$.
\end{proposition}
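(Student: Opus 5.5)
The plan is to reduce every claim to elementary properties of the scalar softplus $\spdelta$. Its derivative is $\spdelta'(q)=\sigm(q/\delta)=1/(1+e^{-q/\delta})\in(0,1)$, and it is strictly increasing in its argument. Differentiating \eqref{eq:fal} gives
\[
\partial_z\FAL_\delta=\sigm\bigl((z-\ell)/\delta\bigr)-\sigm\bigl((z-u)/\delta\bigr).
\]
Because $z-\ell>z-u$ and $\sigm$ is strictly increasing, this difference is strictly positive. It is also strictly less than $1$, since the first term is $<1$ and the second is $>0$. This proves the derivative half of \eqref{eq:falrange}.

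Nonexpansiveness \eqref{eq:fallip} then follows from the mean value theorem. The range bounds come from monotonicity together with the limits as $z\to\pm\infty$:
\begin{itemize}
\item as $z\to-\infty$, both softplus terms vanish, so $\FAL_\delta\to\ell$;
\item as $z\to+\infty$, $\spdelta(q)-q\to0$, so $\FAL_\delta\to\ell+(z-\ell)-(z-u)=u$.
\end{itemize}
Strict monotonicity then forces $\ell<\FAL_\delta<u$ for every finite $z$. The homogeneity identity is a direct substitution, $\operatorname{sp}_{\lambda\delta}(\lambda q)=\lambda\spdelta(q)$.

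The projection bound in \eqref{eq:falproject} is the only step needing care; I expect it to be the main obstacle. I will write $\spdelta(q)=q^++\delta\log(1+e^{-|q|/\delta})$. The error term $e(q):=\delta\log(1+e^{-|q|/\delta})$ lies in $(0,\delta\log2]$ and is decreasing in $|q|$. Since $(z-\ell)^+-(z-u)^+=P_{[\ell,u]}(z)-\ell$ (check the three cases $z\le\ell$, $\ell<z<u$, $z\ge u$), we get
\[
\FAL_\delta-P_{[\ell,u]}(z)=e(z-\ell)-e(z-u).
\]
This is a difference of two numbers in $(0,\delta\log2]$, so its absolute value is $<\delta\log2$. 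No finer analysis of which term dominates is needed.

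The final claim is a triangle inequality: $|\FAL_\delta-v|\le|\FAL_\delta-P_{[\ell,u]}(z)|+|P_{[\ell,u]}(z)-v|$. The projection onto a convex interval is nonexpansive and fixes $v\in[\ell,u]$, so $|P_{[\ell,u]}(z)-v|\le|z-v|$. Combining this with the projection bound gives $|\FAL_\delta(z;\ell,u)-v|\le|z-v|+\delta\log2$.
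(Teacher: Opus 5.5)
Your proposal is correct and takes essentially the same route as the paper's proof. The sigmoid-difference formula for $\partial_z\FAL_\delta$ gives the derivative bounds, and the mean value theorem then gives nonexpansiveness; the limits at $z\to\pm\infty$ give the strict range; the projection bound follows from $P_{[\ell,u]}(z)=\ell+(z-\ell)^+-(z-u)^+$, since the deviation is then a \emph{difference} of two softplus errors in $(0,\delta\log2]$; homogeneity follows by substitution, and the last bound by the triangle inequality plus projection nonexpansiveness. Your explicit identity $\spdelta(q)=q^++\delta\log(1+e^{-|q|/\delta})$ is a concrete form of the paper's bound $0\le\spdelta(q)-q^+\le\delta\log2$, and it even gives the strict inequality.
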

\begin{proof}
Put $A=\sigm((z-\ell)/\delta)$, $B=\sigm((z-u)/\delta)$, where $\sigm(s)=(1+e^{-s})^{-1}$. Since $0<B<A<1$,
\begin{equation}
\partial_z\FAL=A-B,\qquad \partial_\ell\FAL=1-A,\qquad \partial_u\FAL=B.
\label{eq:falgradient}
\end{equation}
These derivatives are positive and sum to one. The limits as $z\to-\infty,+\infty$ are $\ell,u$, respectively, so strict monotonicity proves the interval result. The mean-value theorem proves scalar nonexpansiveness; integrating the joint gradient also gives one-Lipschitz continuity in the maximum norm along admissible triples. Since
$P_{[\ell,u]}(z)=\ell+(z-\ell)^+-(z-u)^+$ and $0\le\spdelta(q)-q^+\le\delta\log2$, their difference is the difference of two numbers in $[0,\delta\log2]$, not the sum of their absolute upper bounds. This proves the single-$\delta\log2$ constant. Projection nonexpansiveness gives the error bound relative to $v$. Finally, $\operatorname{sp}_{\lambda\delta}(\lambda q)=\lambda\spdelta(q)$ proves homogeneity.
\end{proof}
As a function of the independent scalar arguments $(z,\ell,u)$, FAL is a smooth approximation to projection, not a retraction that fixes every interior price. This scalar smoothness does not imply global smoothness of the composed price as a function of spot when $\ell=L(S,\tau)$ is kinked. The dimensionless parameter $\delta_v$ acting on $V/K$ corresponds to $K\delta_v$ in price units. At maturity, the payoff equals $L$ and
\begin{equation}
\FAL_\delta(L;L,U)-L=\delta\log2-\delta\log\!\left(1+e^{-(U-L)/\delta}\right)>0\quad(L<U).
\label{eq:terminalbias}
\end{equation}
The explicit terminal assignment in \eqref{eq:fi} does not remove this nonterminal limit bias for fixed $\delta$. At $S=0$, $L=U=0$, and the output is assigned zero directly rather than invoking the strict-interval proposition.

On a margin region $z-\ell\ge m_0>0$, $u-z\ge m_0$,
\begin{equation}
|\FAL_\delta(z)-z|\le2\delta e^{-m_0/\delta},\quad
1-\partial_z\FAL\le2e^{-m_0/\delta},\quad
\partial_\ell\FAL,\partial_u\FAL\le e^{-m_0/\delta}.
\label{eq:falmargin}
\end{equation}
For $q=(z,\ell,u)$, its Hessian is
\begin{equation}
\nabla_q^2\FAL=h_\ell v_\ell v_\ell^\top-h_u v_u v_u^\top,\quad
v_\ell=(1,-1,0)^\top,\quad v_u=(1,0,-1)^\top,
\label{eq:falhessian}
\end{equation}
where $h_\ell=A(1-A)/\delta$, $h_u=B(1-B)/\delta$, each bounded by $e^{-m_0/\delta}/\delta$ on that region. Small scalar distortion therefore need not imply small derivative distortion near an active bound.

Figure~\ref{fig:app_fal} isolates the scalar smoothing effect and its
terminal implication; neither panel is a trained-model experiment.
\begin{figure}[!htbp]
\centering
\begingroup\fontfamily{ptm}\selectfont\mathversion{figuretimes}

\begin{tikzpicture}
\begin{groupplot}[apphalf,width=4.9cm,group style={group size=2 by 1,horizontal sep=2.2cm}]
\nextgroupplot[title={(a) Scalar map near a lower bound},xlabel={$(z-L)/\delta$},ylabel={$(\mathcal C_\delta-L)/\delta$},
 xmin=-5,xmax=5,ymin=-.2,ymax=5.5,xtick={-4,-2,0,2,4},ytick={0,2,4},legend pos=north west]
\addplot[appGray,dashed,domain=-5:5,samples=101,no marks] {max(x,0)};\addlegendentry{Projection}
\addplot[appRed,domain=-5:5,samples=101,no marks] {max(x,0)+ln(1+exp(-abs(x)))-ln(1+exp(x-20))};\addlegendentry{FAL}
\addplot[only marks,appRed,mark=diamond*] coordinates {(0,.693147)};
\node[font=\scriptsize,anchor=west] at (axis cs:.3,1.1) {$\log 2$ bias};
\nextgroupplot[title={(b) Price-bias upper envelope},xlabel={Remaining-life fraction $\chi$},ylabel={Bound on smoothing bias$/K$},
 xmin=0,xmax=1,ymin=0,ymax=.0016,xtick={0,.5,1},ytick={0,.0007,.0014},
 yticklabels={$0$,$7\!\times\!10^{-4}$,$1.4\!\times\!10^{-3}$},legend pos=south east]
\addplot[appRed,no marks,domain=0:1] {.002*ln(2)};\addlegendentry{Fixed $\delta_v=0.002$}
\addplot[appBlue,dashed,no marks,domain=0:1] {.002*x*ln(2)};\addlegendentry{Scaled $\delta_c=0.002$}
\end{groupplot}
\end{tikzpicture}

\endgroup
\caption{Analytic FAL diagnostics. (a) The scalar map and hard projection on an
illustrative interval with $(U-L)/\delta=20$; the lower-bound bias approaches
$\delta\log2$. This local coordinate plot is not a price-versus-spot surface.
(b) The bounds $\delta_v\log2$ and $\chi\delta_c\log2$ on normalized price
smoothing error, with equal illustrative coefficients $\delta_c=\delta_v=0.002$.
The fixed-width curve concerns the nonterminal output: an exact assignment
at $\chi=0$ does not remove its one-sided limit bias.
Terminal-scaled FAL is the untested theoretical refinement of
Appendix~\ref{app:scaledfal}, not a new trained model.}
\label{fig:app_fal}
\end{figure}

\FloatBarrier
\subsection{Correction equation, cancellation, and differentiated profiles}
\label{app:ra}
With $A_0$ from \eqref{eq:strikeline}, the strike-matched carrier for $S>0$ and $A_0(\tau)>0$ is
\begin{equation}
\begin{gathered}
\Cref(S,t)=S\Phi(d_+)-Ke^{-r\tau}\Phi(d_-),\\
d_+=\frac{\log(S/K)+r\tau+A_0(\tau)/2}{\sqrt{A_0(\tau)}},\qquad
d_-=d_+-\sqrt{A_0(\tau)}.
\end{gathered}
\label{eq:carrier}
\end{equation}
Here $\Phi$ is the standard normal CDF.
The carrier exactly prices the spatially frozen, time-dependent variance $a(K,t)$.
Its limiting assignments are $\Cref(S,T)=\relu{S-K}$ and $\Cref(0,t)=0$.
\paragraph{Assumptions and correction equation.}
Set $x=\log(S/K)$ and $\bar a(x,\tau)=a(Ke^x,T-\tau)$.
Contract parameters range over a compact class with $K,T$ bounded away from zero.
For value estimates, assume a full-line extension with $0<a_-\le\bar a\le a_+<\infty$, uniform spatial Lipschitz continuity, and unique pricing/correction solutions with comparison in an at-most-exponential-growth class.
For differentiated profiles, additionally assume a smooth extension with $\bar a\in C_b^{3+\zeta,1+\zeta/2}$, $0<\zeta<1$.
A smooth neighborhood of the strike suffices by localization; clipping elsewhere need not be differentiable, as detailed below.
Constants below are uniform over the stated compact contract class.
The stronger differentiated assumptions concern the smooth extension, not every clipped production coefficient globally.

The exact correction $W=V-\Cref$ satisfies, wherever classical derivatives exist,
\begin{equation}
\begin{gathered}
\Pa W=f,\qquad W(S,T)=0,\\
f=\tfrac12[a(S,T-\tau)-a(K,T-\tau)]S^2\Cref_{SS}.
\end{gathered}
\label{eq:forcing}
\end{equation}

\paragraph{Proof of Proposition~\ref{prop:value}.}
The carrier solves the equation with $a(S,t)$ replaced by $a(K,t)$. Subtracting its forward equation from the full price equation gives \eqref{eq:forcing}. Set $c^{\mathrm{ref}}(x,\tau)=\Cref(Ke^x,T-\tau)/K$ and $w=W(Ke^x,T-\tau)/K$. Then
\begin{align}
(\partial_{xx}-\partial_x)c^{\mathrm{ref}}&=\frac{e^x\phi(d_+)}{\sqrt{A_0(\tau)}},\label{eq:carrierdensity}\\
\partial_\tau w&=\tfrac12\bar a\,w_{xx}+(r-\bar a/2)w_x-rw+\widetilde f,\nonumber\\
\widetilde f&=\tfrac12[\bar a(x,\tau)-\bar a(0,\tau)](\partial_{xx}-\partial_x)c^{\mathrm{ref}}.
\label{eq:logcorrection}
\end{align}
Here $\phi=\Phi'$. Uniform ellipticity gives $a_-\tau\le A_0(\tau)\le a_+\tau$. Completing the square in $e^x\phi(d_+)$, with $r\tau+A_0/2=O(\tau)$, yields
\begin{equation}
0\le(\partial_{xx}-\partial_x)c^{\mathrm{ref}}\le C\tau^{-1/2}e^{-c_0x^2/\tau}.
\end{equation}
Lipschitz continuity implies $|\widetilde f|\le C|x|\tau^{-1/2}e^{-c_0x^2/\tau}\le C_f$. Duhamel/comparison with zero initial correction gives
\begin{equation}
\norm{w(\cdot,\tau)}_\infty\le\int_0^\tau e^{|r|(\tau-s)}C_f\,ds\le C\tau.
\end{equation}
The full-line estimate concerns the declared extension. A bounded-domain analogue must also include differences in lateral boundary data. This proves the stated value bound.\hfill$\square$

\paragraph{Alternative coupling estimate.}
For initial spots in a fixed compact interval containing $K$, assume bounded spot-Lipschitz $\sigma$, a globally Lipschitz extension of $s\mapsto s\sigma(s,u)$, and finite required fourth moments, uniformly over the contract class. Drive the true process $X$ and strike-frozen process $Y$ from common initial $S$ with the same Brownian motion. The latter has discounted call value $\Cref$. Its geometric representation gives, for $q=2,4$,
\begin{equation}
\E\sup_{t\le v\le u}|Y_v-K|^q\le C_q\big(|S-K|^q+(u-t)^{q/2}\big).
\end{equation}
For $D=X-Y$, split the diffusion discrepancy as
\begin{align}
X_u\sigma(X_u,u)-Y_u\sigma(K,u)
={}&[X_u\sigma(X_u,u)-Y_u\sigma(Y_u,u)]\nonumber\\
&+Y_u[\sigma(Y_u,u)-\sigma(K,u)].
\end{align}
Stochastic integral estimates and Gronwall reduce the mean-square difference to the integral of $\E[Y_u^2|Y_u-K|^2]$. H\"older and the fourth-moment bounds give
$\E\sup_{t\le u\le T}|D_u|^2\le C(\tau|S-K|^2+\tau^2)$.
The payoff is one-Lipschitz, hence
\begin{equation}
|V(S,t)-\Cref(S,t)|\le C\big(\sqrt\tau|S-K|+\tau\big).
\label{eq:coupling}
\end{equation}
This is an $O(\tau)$ value estimate on a parabolic strike neighborhood; it does not permit derivative estimates by differentiation of its right-hand side.

\paragraph{Approximate strike-line integration.}
Let $\widetilde A_0$ be absolutely continuous with $a_-\tau\le\widetilde A_0(\tau)\le a_+\tau$ and $\widetilde a_K=\partial_\tau\widetilde A_0$ almost everywhere. Its carrier $\widetilde C^{\mathrm{ref}}$ satisfies
\begin{equation}
\Pa(V-\widetilde C^{\mathrm{ref}})=\tfrac12[a(S,T-\tau)-\widetilde a_K(\tau)]S^2\widetilde C^{\mathrm{ref}}_{SS}.
\end{equation}
Separate spatial mismatch from line error $\varepsilon_K(q)=|\widetilde a_K(q)-a(K,T-q)|$. The preceding Gaussian bound yields
\begin{equation}
\sup_x\frac{|V-\widetilde C^{\mathrm{ref}}|}{K}
\le C\tau+C\int_0^\tau\varepsilon_K(q)q^{-1/2}\,dq
\le C(\tau+\overline\varepsilon_K\sqrt\tau)
\label{eq:quadrature}
\end{equation}
when $\varepsilon_K\le\overline\varepsilon_K$. A fixed nonzero effective line error can contaminate the remainder at $O(\sqrt\tau)$. Generator-based strike evaluation avoids sensor interpolation error but is not a guarantee for arbitrary approximate quadrature.

\paragraph{Full statement of Proposition~\ref{prop:profile}.}
Under the differentiated assumptions, put $a_* =\bar a(0,0)$, $b_* =\partial_x\bar a(0,0)$, and let $g_{a_*}$ be the centered Gaussian density of variance $a_*$.
In spot coordinates, $b_*=K\partial_S a(K,T)$, so its nonvanishing is the nonzero terminal variance-slope condition used in the main text.
Along $S_\tau=Ke^{y\sqrt\tau}$, for fixed $y$,
\begin{align}
\frac{W(S_\tau,T-\tau)}{K\tau}&\longrightarrow G_0(y)=\tfrac{b_*}{4}y g_{a_*}(y),\nonumber\\
\frac{W_S(S_\tau,T-\tau)}{\sqrt\tau}&\longrightarrow G_1(y)=\tfrac{b_*}{4}(1-y^2/a_*)g_{a_*}(y),\label{eq:profiles}\\
KW_{SS}(S_\tau,T-\tau)&\longrightarrow G_2(y)=\tfrac{b_*}{4}(y^3/a_*^2-3y/a_*)g_{a_*}(y).\nonumber
\end{align}
Along the same rays, $\sqrt\tau K\Cref_{SS}\to g_{a_*}$.
For $b_*y\ne0$, $W/(\tau/T)^\alpha$ has a bounded nonzero limit precisely at $\alpha=1$.
The criticality concerns the value target, not global unweighted $C^2$ regularity.
Generically $R^*=O(1)$, $R^*_S=O(\tau^{-1/2})$, and $KR^*_{SS}=O(\tau^{-1})$ on these rays.
If $b_*=0$, the uniqueness claim for this exponent does not apply.
Manufactured reference calculations support the scaled profiles, not a theorem about training (Appendix~\ref{app:verification}).

\paragraph{Proof of Proposition~\ref{prop:profile}.}
Put $W_\varepsilon(y,s)=\varepsilon^{-2}w(\varepsilon y,\varepsilon^2s)$. Equation~\eqref{eq:logcorrection} becomes
\begin{align}
\partial_sW_\varepsilon={}&\tfrac12\bar a(\varepsilon y,\varepsilon^2s)\partial_{yy}W_\varepsilon
+\varepsilon\big(r-\bar a(\varepsilon y,\varepsilon^2s)/2\big)\partial_yW_\varepsilon\nonumber\\
&-\varepsilon^2rW_\varepsilon+\widetilde f(\varepsilon y,\varepsilon^2s).
\end{align}
Taylor expansion gives $\bar a(\varepsilon y,\varepsilon^2s)-\bar a(0,\varepsilon^2s)=\varepsilon b_*y+o(\varepsilon)$ and $A_0(\varepsilon^2s)/\varepsilon^2\to a_*s$. Consequently
$\widetilde f(\varepsilon y,\varepsilon^2s)\to(b_*/2)y g_{a_*s}(y)$ in a local parabolic H\"older norm away from $s=0$. The value bound gives $|W_\varepsilon(y,s)|\le Cs$. Interior Schauder estimates and compactness produce locally $C^{2,1}$-convergent subsequences on $|y|\le M$, $s_0\le s\le1$. Each limit has zero initial trace and solves
\begin{equation}
\partial_sW_0=\tfrac12a_*\partial_{yy}W_0+\tfrac12b_*y g_{a_*s}(y),\qquad W_0(y,0)=0.
\end{equation}
Uniqueness in the bounded mild-solution class identifies the whole limit. The Gaussian convolution identity
\begin{equation}
\int_\R g_{a_*(s-q)}(y-z)\,z g_{a_*q}(z)\,dz=\frac qs\,y g_{a_*s}(y)
\end{equation}
evaluates Duhamel's integral as $W_0(y,s)=(b_*/4)syg_{a_*s}(y)$. Taking $s=1$, $\varepsilon=\sqrt\tau$, and using
\begin{equation}
W_S=e^{-x}w_x,\qquad KW_{SS}=e^{-2x}(w_{xx}-w_x)
\end{equation}
proves all three profiles in \eqref{eq:profiles}. The carrier limit follows directly from $A_0(\tau)/\tau\to a_*$. Finally,
\begin{equation}
\frac{W(S_\tau,T-\tau)}{(\tau/T)^\alpha}=KT^\alpha\tau^{1-\alpha}\big(G_0(y)+o(1)\big).
\end{equation}
For $b_*y\ne0$, $G_0(y)\ne0$, which gives a vanishing, finite nonzero, or divergent limit according as $\alpha<1$, $\alpha=1$, or $\alpha>1$.\hfill$\square$

Figure~\ref{fig:app_profiles} visualizes the universal dimensionless
shapes obtained from \eqref{eq:profiles}. It separates bounded correction
values from the derivative scales that remain singular after rescaling.
\begin{figure}[!htbp]
\centering
\begingroup\fontfamily{ptm}\selectfont\mathversion{figuretimes}

\begin{tikzpicture}
\begin{groupplot}[appsmall,group style={group size=3 by 1,horizontal sep=1.6cm},
 xmin=-3.5,xmax=3.5,xtick={-3,0,3},xlabel={$u=y/\sqrt{a_*}$},domain=-3.5:3.5,samples=121]
\nextgroupplot[title={(a) Value shape},ylabel={$4G_0/b_*$},ymin=-.3,ymax=.3,ytick={-.2,0,.2}]
\addplot[appRed,no marks] {x*exp(-x*x/2)/sqrt(2*pi)};
\nextgroupplot[title={(b) Delta shape},ylabel={$4\sqrt{a_*}G_1/b_*$},ymin=-.25,ymax=.45,ytick={-.2,0,.2,.4}]
\addplot[appBlue,dashed,no marks] {(1-x*x)*exp(-x*x/2)/sqrt(2*pi)};
\nextgroupplot[title={(c) Gamma shape},ylabel={$4a_*G_2/b_*$},ymin=-.65,ymax=.65,ytick={-.5,0,.5}]
\addplot[appNavy,densely dotted,line width=1.3pt,no marks] {(x*x*x-3*x)*exp(-x*x/2)/sqrt(2*pi)};
\end{groupplot}
\end{tikzpicture}

\endgroup
\caption{Dimensionless first-order skew profiles from \eqref{eq:profiles}, for $b_*\ne0$.
With $u=y/\sqrt{a_*}$ and standard normal density $\phi$, the three curves
are $u\phi(u)$, $(1-u^2)\phi(u)$, and $(u^3-3u)\phi(u)$.
They correspond to the limits of $W/(K\tau)$, $W_S/\sqrt\tau$, and $KW_{SS}$
after the displayed amplitude normalizations. These are exact analytic
shapes, not fitted curves or empirical convergence measurements.
Nonzero-target criticality requires $b_*y\ne0$; the value profile vanishes at $y=0$.}
\label{fig:app_profiles}
\end{figure}

\paragraph{Localization near an unclipped strike.}
Suppose the value assumptions hold globally and the coefficient is smooth on $|x|<d$ for small remaining times. Choose a smooth, bounded, uniformly elliptic extension agreeing there. Its strike-line carrier is identical. The difference of the two normalized prices solves the same homogeneous parabolic equation inside the neighborhood, with zero initial data and bounded lateral data. A bounded-drift, bounded-volatility log diffusion started in $|x|\le d/2$ exits $|x|<d$ before $\tau$ with probability at most $Ce^{-c/\tau}$. The stopped representation bounds the value difference by the same scale. Interior derivative estimates add only powers of $\tau^{-1}$, so the first two derivative differences vanish faster than the scales in \eqref{eq:profiles} on $x=y\sqrt\tau$. Thus the local profiles survive clipping away from the strike. This is not a discretization or finite-domain error estimate.

\FloatBarrier
\subsection{Error transfer, full FAL derivatives, and PDE residuals}
\label{app:stages}
Here PDE residual refers to the governing-equation defect $\Pa u$; it is
distinct from the carrier-relative correction learned by the network. For correction error $\rho_\theta=R_\theta-R^*$, define the weighted norm on the nonterminal domain $\Omega$ under consideration:
\begin{equation}
N_\rho=\sup_\Omega\left(\frac{|\rho_\theta|}{K}+\sqrt\chi|\rho_{\theta,S}|+\chi K|\rho_{\theta,SS}|\right),\quad \chi>0.
\label{eq:weighted}
\end{equation}
Equation~\eqref{eq:transfer} follows from $z_\theta-V=\chi\rho_\theta$ and the independence of $\chi$ from $S$.
Each term in \eqref{eq:weighted} bounds the corresponding correction error, giving the three inequalities separately when $N_\rho<\infty$.
Finite checks of these identities cannot establish a uniform bound on the learned norm.
In particular, the maturity gate does not force Gamma error to vanish: for a fixed smooth decoder, $\chi KR_{\theta,SS}$ can vanish while $KW_{SS}$ has a nonzero limit.
At nonterminal points, FAL gives the global price estimate
$|\pP-V|\le\chi|\rho_\theta|+K\delta_v\log2$.
Its Greek derivatives contain bound-derivative and Hessian terms, so pre-FAL Greek guarantees do not transfer globally.

Before FAL, the exact PDE-residual identity is
\begin{equation}
\Pa z_\theta=-f+\chi\Pa R_\theta+R_\theta/T
=\chi\Pa\rho_\theta+\rho_\theta/T.
\label{eq:residual}
\end{equation}
This identity connects the correction target to physics information without a PDE loss; it does not guarantee successful optimization.
Post-FAL differentiation adds the curvature and smoothing terms derived below.

For fixed price-unit smoothing $\delta$, define $F=\FAL_\delta(z_\theta;L,U)$, $A=\sigm((z_\theta-L)/\delta)$, $B=\sigm((z_\theta-U)/\delta)$, $p=A-B$, $q=1-A$, $s=B$, $h_L=A(1-A)/\delta$, and $h_U=B(1-B)/\delta$. On a smooth region,
\begin{align}
F_S&=p z_{\theta,S}+qL_S+sU_S,\label{eq:fullfalDelta}\\
F_{SS}&=p z_{\theta,SS}+qL_{SS}+sU_{SS}
+h_L(z_{\theta,S}-L_S)^2-h_U(z_{\theta,S}-U_S)^2.
\label{eq:fullfalGamma}
\end{align}
These follow from \eqref{eq:falgradient}--\eqref{eq:falhessian}. The composition need not be globally smooth in spot. For fixed $\tau>0$, let $S_b=Ke^{-r\tau}$ be the kink of the lower bound. If $z_\theta(\cdot,t)$ is smooth and finite at $S_b$, then $L_S$ jumps from $0$ to $1$ while $p,q,s$ and the other smooth factors remain continuous, so
\begin{equation}
F_S(S_b^+,t)-F_S(S_b^-,t)
=1-\sigm\!\left(\frac{z_\theta(S_b,t)}{\delta}\right)>0.
\label{eq:faldeltajump}
\end{equation}
Thus the post-FAL predictor is generally only piecewise $C^1$ in spot; this does not invalidate grid-based error metrics, but it prevents interpreting scalar softplus smoothness as a global Greek-smoothness guarantee. On a margin region $z_\theta-L\ge m_0$, $U-z_\theta\ge m_0$,
\begin{align}
|F_S-V_S|\le{}&\chi|\rho_{\theta,S}|+e^{-m_0/\delta}(|L_S-V_S|+|U_S-V_S|),\label{eq:falDeltabound}\\
|F_{SS}-V_{SS}|\le{}&\chi|\rho_{\theta,SS}|+e^{-m_0/\delta}(|L_{SS}-V_{SS}|+|U_{SS}-V_{SS}|)\nonumber\\
&+\frac{e^{-m_0/\delta}}{\delta}\big[(z_{\theta,S}-L_S)^2+(z_{\theta,S}-U_S)^2\big].
\label{eq:falGammabound}
\end{align}
Subtract $V_S=(p+q+s)V_S$ and its second-derivative analogue, then use $p\le1$, $q,s\le e^{-m_0/\delta}$. These are local, not global, Greek bounds: $L$ is not $C^2$ at $S=Ke^{-r\tau}$ and no positive margin is uniform at maturity.

For time-dependent $\delta_e(\tau)>0$ and $b=\tfrac12aS^2$, the full post-FAL PDE residual on smooth nonterminal regions is
\begin{align}
\Pa F={}&p\Pa z_\theta-b\big[h_L(z_{\theta,S}-L_S)^2-h_U(z_{\theta,S}-U_S)^2\big]\nonumber\\
&+\big(\delta'_e(\tau)+r\delta_e\big)\partial_\delta\FAL_{\delta_e}(z_\theta;L,U).
\label{eq:postfalresidual}
\end{align}
Indeed, for $q_i\in\{z_\theta,L,U\}$ and corresponding weights $p_i$, differentiation gives $F_\tau=\sum_i p_iq_{i,\tau}+\delta'_e\partial_\delta\FAL$ and $F_{SS}=\sum_i p_iq_{i,SS}+q_S^\top\nabla_q^2\FAL q_S$. Homogeneity yields
$F-\sum_i p_iq_i=\delta_e\partial_\delta\FAL$.
On either smooth branch of $L$, $\Pa L=\Pa U=0$, proving \eqref{eq:postfalresidual}. For the implemented constant $\delta_e=K\delta_v$, the $r\delta_e$ term remains. Product differentiation of $\Cref+\chi R_\theta$ gives \eqref{eq:residual} because $\Pa\Cref=-f$ and $\chi'=1/T$. Discrete PP is outside these differentiation identities.

\paragraph{Interior recovery requires a continuum norm.}
Let $u(x,\tau)=V(Ke^x,T-\tau)/K$ and $\widehat u$ be a prediction at one specified output stage. On nested parabolic cylinders $Q'\Subset Q$, separated from maturity, boundaries, and output kinks, assume uniform ellipticity, $C^{\zeta,\zeta/2}$ coefficients, bounded drift/reaction, and $u,\widehat u\in C^{2+\zeta,1+\zeta/2}(Q)$. The interior estimate \citep{r62,r63} is
\begin{equation}
\norm{\widehat u-u}_{C^{2+\zeta,1+\zeta/2}(Q')}
\le C\left(\norm{\widehat u-u}_{C^0(Q)}+\norm{\mathcal L_a\widehat u}_{C^{\zeta,\zeta/2}(Q)}\right),
\label{eq:schauder}
\end{equation}
where $\mathcal L_a=\partial_\tau-\tfrac12\bar a\partial_{xx}-(r-\bar a/2)\partial_x+r$.
This follows by applying the parabolic estimate to $e=\widehat u-u$, which satisfies $\mathcal L_ae=\mathcal L_a\widehat u$. Since $\Delta=e^{-x}u_x$ and $K\Gamma=e^{-2x}(u_{xx}-u_x)$, bounded log cylinders also give
\begin{equation}
\sup_{Q'}\left(\frac{|\widehat V-V|}{K}+|\widehat\Delta-\Delta|+K|\widehat\Gamma-\Gamma|\right)
\le C\left(\norm e_{C^0(Q)}+\norm{\mathcal L_a\widehat u}_{C^{\zeta,\zeta/2}(Q)}\right).
\end{equation}
A scalar FD RMS on one grid controls neither the continuum supremum nor the H\"older seminorm, and it also includes truncation error. It cannot certify this premise.

\FloatBarrier
\subsection{Terminal-scaled FAL: a theoretical refinement, not a tested model}
\label{app:scaledfal}
For a dimensionless $\delta_c>0$ and $\chi>0$, define
\begin{equation}
L_R=(L-\Cref)/\chi,\quad U_R=(U-\Cref)/\chi,\qquad
\widetilde V_\theta=\Cref+\chi\FAL_{K\delta_c}(R_\theta;L_R,U_R).
\label{eq:refinedfal}
\end{equation}
Translation and homogeneity show
$\widetilde V_\theta=\FAL_{K\chi\delta_c}(z_\theta;L,U)$.
Assign the payoff at $\chi=0$ and zero at $S=0$.
\begin{proposition}[Conditional value-level closure]
For $\chi>0$ and $L<U$, $L<\widetilde V_\theta<U$, and
\begin{equation}
\frac{|\widetilde V_\theta-V|}{K}\le\chi\left(\frac{|\rho_\theta|}{K}+\delta_c\log2\right).
\end{equation}
If $R_\theta/K$ is locally bounded near maturity, $\widetilde V_\theta$ converges to the payoff in value. If in addition
\begin{equation}
\begin{aligned}
R_\theta^{K,T}(S,t)&=K\mathcal R_\theta\big(\mathfrak a_{K,T},rT,\text{scale-invariant inputs};S/K,\tau/T\big),\\
\mathfrak a_{K,T}(m,s)&=Ta(Km,T(1-s)).
\end{aligned}
\end{equation}
with dimensionless sensors and compatible statistics, the composed predictor inherits \eqref{eq:scale}.
\end{proposition}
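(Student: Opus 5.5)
The proof has three parts: rewrite $\widetilde V_\theta$ as an ordinary FAL output with smoothing width shrinking in $\chi$, get the bound and error estimate from the scalar FAL proposition, and then handle terminal convergence and scale equivariance separately.

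\emph{Step 1: rewrite $\widetilde V_\theta$.} Fix $\chi>0$. Translation covariance is immediate from \eqref{eq:fal}: $c+\FAL_\delta(z;\ell,u)=\FAL_\delta(z+c;\ell+c,u+c)$, since softplus arguments only see differences. Homogeneity \eqref{eq:falproject} with $\lambda=\chi$ gives
\[
\chi\FAL_{K\delta_c}(R_\theta;L_R,U_R)=\FAL_{K\chi\delta_c}(\chi R_\theta;L-\Cref,U-\Cref).
\]
Adding $\Cref$ and using $z_\theta=\Cref+\chi R_\theta$ yields $\widetilde V_\theta=\FAL_{K\chi\delta_c}(z_\theta;L,U)$.

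\emph{Step 2: bounds and error estimate.} Since $L<U$ and $K\chi\delta_c>0$, the strict inequalities follow from \eqref{eq:falrange}. The exact price satisfies $V\in[L,U]$ by \eqref{eq:bounds}. The final line of the scalar FAL proposition therefore gives
\[
|\widetilde V_\theta-V|\le|z_\theta-V|+K\chi\delta_c\log2 .
\]
We also have $z_\theta-V=\chi(R_\theta-R^\ast)=\chi\rho_\theta$, because $V=\Cref+\chi R^\ast$ by the definition $R^\ast=W/\chi$. Dividing by $K$ gives the stated inequality. This step involves no analysis of $V$ beyond \eqref{eq:bounds}.

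\emph{Step 3: convergence to the payoff.} As $\chi\to0$, $\Cref\to(S-K)^+$ locally uniformly in $S$: this is the limiting assignment in Appendix~\ref{app:ra}, together with continuity of $A_0$ and $A_0(\tau)\to0$. Local boundedness of $R_\theta/K$ gives $\chi R_\theta\to0$. Hence $z_\theta\to(S-K)^+$. The lower bound satisfies $L(S,\tau)\to(S-K)^+$. Using projection nonexpansiveness and the $\delta\log2$ estimate in \eqref{eq:falproject},
\[
|\widetilde V_\theta-(S-K)^+|\le|z_\theta-(S-K)^+|+|P_{[L,U]}((S-K)^+)-(S-K)^+|+K\chi\delta_c\log2 .
\]
The middle term is at most $|L-(S-K)^+|$, since $(S-K)^+\le S=U$. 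All three terms vanish. One could alternatively use Proposition~\ref{prop:value} through $V$, but that route needs the assumption on $\rho$, so it is avoided.

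\emph{Step 4: scale equivariance (the main obstacle).} This is the only nonroutine step, because one must check that every ingredient transforms consistently under $(S,K,a)\mapsto(\lambda S,\lambda K,a_\lambda)$. The pieces transform as follows:
\begin{itemize}
\item $\mathfrak a_{\lambda K,T}$ built from $a_\lambda$ equals $\mathfrak a_{K,T}$, since $a_\lambda(\lambda Km,\cdot)=a(Km,\cdot)$. The ratio $\lambda S/\lambda K=S/K$ is unchanged, and the hypothesis makes the remaining inputs and statistics invariant. Hence $R_\theta\mapsto\lambda R_\theta$.
\item $A_0$ is invariant, because $a_\lambda(\lambda K,\cdot)=a(K,\cdot)$. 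So $d_\pm$ are invariant and $\Cref\mapsto\lambda\Cref$.
\item $L,U\mapsto\lambda L,\lambda U$, and the smoothing width $K\chi\delta_c\mapsto\lambda K\chi\delta_c$.
\end{itemize}
Homogeneity in \eqref{eq:falproject} then gives $\widetilde V_\theta\mapsto\lambda\widetilde V_\theta$. The payoff and zero-spot assignments are also $1$-homogeneous, so the price identity of \eqref{eq:scale} holds. Differentiating in physical spot $\lambda S$ gives the Delta and Gamma identities where the derivatives exist.

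The care needed in Step 4 is to confirm that no fixed-unit standardizer or absolute $K$ feature survives. The hypothesis on $\mathcal R_\theta$ is there precisely to rule this out, in contrast to the implemented model of Appendix~\ref{app:scale}.
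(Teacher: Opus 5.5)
Your proposal is correct and follows the paper's proof essentially step for step. You use translation and homogeneity to rewrite $\widetilde V_\theta=\FAL_{K\chi\delta_c}(z_\theta;L,U)$, then the scalar FAL properties for the interval and error bound, decoder boundedness with carrier convergence for the terminal limit, and invariance of the dimensionless inputs on the scaling orbit with homogeneity for the scale identity; your Step~3 also spells out the comparison with the moving lower bound $L\to(S-K)^+$, which the paper leaves implicit.
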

\begin{proof}
Move $\chi$ through softplus to obtain the equivalent smoothing $K\chi\delta_c$, then apply \eqref{eq:falrange}--\eqref{eq:falproject}. Boundedness of the decoder, $\chi\to0$, and carrier convergence give the terminal limit. On the joint scaling orbit, dimensionless fields, contract combinations, and sensor data do not change; the decoder, carrier, bounds, and smoothing all scale by $\lambda$. Homogeneity proves price scaling, and differentiation gives the Greek identities where defined.
\end{proof}
This is not a theorem for the implemented dimensional branch or fixed $0.002K$ smoothing. It does not transfer pre-FAL weighted Greek convergence through the refined map: its Hessian grows like $(K\chi\delta_c)^{-1}$ near active bounds. Empirical claims would require a separate matched experiment.

\clearpage
\section{Implementation, data, and evaluation protocols}
\label{app:protocols}
\FloatBarrier
\subsection{Model vocabulary and analytical scope}
Figure~\ref{fig:mechanism} summarizes the FI construction.
Table~\ref{tab:vocabulary} defines the model names used throughout the paper,
and Table~\ref{tab:theoryscope} separates mathematical assumptions from
properties established for the implemented predictor.

\begin{table}[htbp]
\caption{Model names and components. RA, FAL, and PP have distinct roles.}
\label{tab:vocabulary}
\centering\small
\begin{tabularx}{\linewidth}{lY}
\toprule
Name & Meaning\\
\midrule
DeepONet & Deep Operator Network\\
Normalized DeepONet & DeepONet with contract normalization\\
FAL & Financial Admissibility Layer; smooth-in-raw-price pointwise call-price-bound map; spot composition is piecewise smooth\\
RA & Strike-matched carrier--residual ansatz\\
PP & Deterministic post-processing of a predicted grid\\
PI-DeepONet & Physics-informed DeepONet; canonical comparator has no RA/FAL/PP\\
FI-DeepONet & Normalization + RA + FAL; principal ID output uses terminal correction\\
FI-DeepONet-PP & The same selected FI checkpoint followed by PP\\
QO-FI-DeepONet & Exact compilation of selected frozen maps with orthogonal factors and a classical signed core\\
QDO-FI-DeepONet & Restricted diagonal--orthogonal map approximation followed by supervised adaptation\\
\bottomrule
\end{tabularx}
\end{table}

\begin{table}[htbp]
\caption{Analytical statements and applicability. Finite verification does not establish a uniform theorem premise.}
\label{tab:theoryscope}
\centering\small
\begin{tabularx}{\linewidth}{p{0.22\linewidth}YY}
\toprule
Result & Required regime & Implemented-model interpretation\\
\midrule
Scale symmetry & Joint spot--strike--field transformation & Exact for the operator; learned model only scale-aware\\
FAL interval & Finite values, common units, $L<U$ & Pointwise only; scalar map smooth, but spot composition inherits lower-bound kink\\
RA cancellation & Value-level assumptions or stated coupling hypotheses & Continuous canonical-field value bound; reference errors separate\\
First-order skew profile & Strong local smoothness; $b_*y\ne0$ for criticality & Local near the strike; clipping away from it is handled by localization\\
Weighted transfer & Finite $N_\rho$ in \eqref{eq:weighted}; pre-FAL stage & Exact identities; uniform learned norm unverified\\
Interior recovery & Same-stage PDE residual in H\"older norm & Not certified by scalar grid RMS\\
Terminal-scaled FAL & Smoothing $K\chi\delta_c$ & Theoretical refinement, not a principal experiment\\
Post-FAL PDE residual & Smooth region and full chain rule & Excludes kinks, terminal line, and PP\\
\bottomrule
\end{tabularx}
\end{table}

\FloatBarrier
\subsection{Fixed coefficient family, sensors, and split support}
\label{app:data}
We use European calls with $T=1$ and zero dividends. The local-volatility family is
\begin{equation}
\sigma_{\mathrm{loc}}(S,t)=\operatorname{clip}_{[0.05,1]}
\left\{\sigma_0\left[1+\beta\tanh\!\left(2\frac{S-K}{K}\right)\right]
\left[1+\gamma\left(\frac{T-t}{T}-\frac12\right)\right]\right\}.
\label{eq:family}
\end{equation}
The raw coefficient in \eqref{eq:family} is clipped to $[0.05,1.00]$ before squaring. Implementation safeguards replace $K,T$ in denominators by $\max(K,10^{-8})$, $\max(T,10^{-8})$; these are inactive on the reported support. The 101 payoff sensors are equally spaced on $[0,220]$. The 231 volatility values use 21 spot positions on $[0,220]$ and 11 calendar-time positions on $[0,1]$, stored time-major. The scalars are $(K,r,\sigma_0,\beta,\gamma)$. The reported generalization experiments evaluate held-out and parameter-support performance within this declared coefficient family; off-family functional response is assessed separately.

Table~\ref{tab:data} lists the fixed support and split sizes;
Table~\ref{tab:ooddesign} decomposes the 320-surface OOD population into
eight single-axis shifts and the joint-shift group.

\begin{table}[htbp]
\caption{Fixed synthetic data support. All intervals are half-open. Validation and held-out ID share the training support.}
\label{tab:data}
\centering\small
\begin{tabularx}{\linewidth}{p{0.25\linewidth}Y}
\toprule
Item & Specification\\
\midrule
Surface counts & 512 training; 32 validation; 32 held-out ID; 320 strict parameter-support OOD\\
Training/ID support & $K\in[55,135)$, $r\in[0,0.13)$, $\sigma_0\in[0.10,0.45)$, $\beta\in[-0.90,0.60)$, $\gamma\in[-0.45,0.50)$\\
Strike shifts & Low $K\in[40,52)$; high $K\in[138,160)$\\
Rate / volatility & High $r\in[0.14,0.18)$; high $\sigma_0\in[0.48,0.60)$\\
Skew shifts & $\beta\in[-1.15,-0.95)$ or $[0.65,0.80)$\\
Term shifts & $\gamma\in[-0.65,-0.50)$ or $[0.55,0.70)$\\
Joint shifts & Two or three shifted coordinates under the recorded design\\
Unshifted coordinates & $K\in[70,120)$, $r\in[0.03,0.08)$, $\sigma_0\in[0.20,0.30)$, $\beta\in[-0.30,0.10)$, $\gamma\in[-0.10,0.15)$\\
\bottomrule
\end{tabularx}
\end{table}

\begin{table}[htbp]
\caption{Disjoint parameter-support OOD design. Excursions are farthest proposal distances beyond the training boundary, normalized by its support width.}
\label{tab:ooddesign}
\centering\small
\begin{tabular}{lrlr}
\toprule
Regime & Count & Shifted support & Max. excursion\\
\midrule
Low $K$ &32&$[40,52)$&18.75\%\\
High $K$ &32&$[138,160)$&31.25\%\\
High $r$ &32&$[0.14,0.18)$&38.46\%\\
High $\sigma_0$ &32&$[0.48,0.60)$&42.86\%\\
Negative $\beta$ &32&$[-1.15,-0.95)$&16.67\%\\
Positive $\beta$ &32&$[0.65,0.80)$&13.33\%\\
Negative $\gamma$ &32&$[-0.65,-0.50)$&21.05\%\\
Positive $\gamma$ &32&$[0.55,0.70)$&21.05\%\\
Joint &64&Two or three coordinates&Multi-axis\\
\midrule
Total &320&Eight single-axis + joint&---\\
\bottomrule
\end{tabular}
\end{table}
Parameter rows and their ordering are recorded in the project, but the original split-generation seed and complete sampling distribution are not established. Comparisons therefore use fixed realized data without asserting a uniform sampling law. The single-axis design shifts $K,\beta,\gamma$ in both directions but $r,\sigma_0$ only upward. A separate 64-surface high-strike diagnostic lies \emph{inside} training support and is not part of the parameter-support OOD evidence. Realized excursions need not reach proposal endpoints (Table~\ref{tab:realizedsupport}).

The data-flow schematic in Figure~\ref{fig:app_dataflow} distinguishes
the sampled network input from the carrier's direct coefficient access.
The 410/102 realization split is nested within the original 512 training
surfaces; it does not use the 32-surface development-validation or ID sets.
\begin{figure}[!htbp]
\centering
\begingroup\fontfamily{ptm}\selectfont\mathversion{figuretimes}

\begin{tikzpicture}[x=1cm,y=1cm]
\node[appbox,text width=12.7cm,minimum height=.7cm] (family) at (6.6,0)
{\textbf{Fixed coefficient family and contracts}\quad $T=1$, $(K,r,\sigma_0,\beta,\gamma)$; clip $\sigma$ before squaring};
\node[appbox,draw=appBlue,text width=3.7cm,minimum height=1.8cm] (sensor) at (2,-1.8)
{\textbf{Sampled branch input}\\101 payoff values\\231 volatility values\\Five scalar features};
\node[appbox,draw=appNavy,text width=3.7cm,minimum height=1.8cm] (carrier) at (6.6,-1.8)
{\textbf{Direct carrier access}\\$a(K,T-u)\ \longrightarrow\ A_0$\\64-point quadrature};
\node[appbox,draw=appGray,text width=3.7cm,minimum height=1.8cm] (label) at (11.2,-1.8)
{\textbf{FD price labels}\\Rannacher startup\\Crank--Nicolson steps\\Pointwise coefficient};
\draw[appflow] (family.south) -- ++(0,-.25) -| (sensor.north);
\draw[appflow] (family) -- (carrier);
\draw[appflow] (family.south) -- ++(0,-.25) -| (label.north);
\node[appbox,text width=3.7cm,minimum height=1.3cm] (train) at (2,-3.7)
{\textbf{512 training surfaces}\\410 realization fitting\\+ 102 realization selection};
\node[appbox,text width=3.7cm,minimum height=1.3cm] (val) at (6.6,-3.7)
{\textbf{32 validation surfaces}\\Price-based checkpoint\\selection};
\node[appbox,draw=appRed,text width=3.7cm,minimum height=1.3cm] (test) at (11.2,-3.7)
{\textbf{Held-out evaluation}\\32 ID; 320 parameter OOD\\No fitting or selection};
\node[appnote,text width=12.7cm] at (6.6,-5)
{Top arrows denote data construction. Bottom boxes denote distinct split roles.\\Carrier coefficient access is richer than the sampled sensor path alone.};
\end{tikzpicture}

\endgroup
\caption{Data access and split roles. The carrier evaluates the generating
coefficient on the strike line; it does not infer that line from the 231
volatility sensors. The original 512 training surfaces also supply the disjoint
410/102 fitting/selection split for quantum-compatible realization, whereas
the 32-surface validation set selects the principal neural checkpoints.
The bottom boxes specify roles, not a random resampling algorithm:
the realized split rows are fixed and the original generation stream is incomplete.
The OOD solver and historical output adapter are separately specified in
Appendix~\ref{app:ood}.}
\label{fig:app_dataflow}
\end{figure}

\FloatBarrier
\subsection{Finite-difference reference and analytic carrier evaluation}
\label{app:solver}
For the principal training and ID data, the reference advances $v_\tau=\Aa v$ directly in spot on $[0,605]$, with $N_S=880$ spatial intervals, $\Delta S=0.6875$, $N_t=320$ time intervals, and $\Delta\tau=0.003125$, in double precision. Coefficients are evaluated at time-step midpoints. Four backward-Euler half steps replace the first two full time intervals (Rannacher startup), followed by Crank--Nicolson steps with tridiagonal Thomas solves \citep{r05,r06,r07,r08}. The conditions are
\begin{equation}
v(0,\tau)=0,\quad v(605,\tau)=\max(605-Ke^{-r\tau},0),\quad v(S,0)=(S-K)^+.
\label{eq:fdbc}
\end{equation}
Reference Delta and Gamma use centered second-order spot differences. The strike is not shifted onto a grid node or midpoint. The output is restricted to 41 calendar times and 81 spots on $[0,1]\times[0,220]$ with coordinate tolerance $2\times10^{-13}$; arrays are ordered (surface,time,spot). The smallest positive remaining maturity is $0.025$, then $0.05$. The query edge 220 is not the computational boundary. The complete OOD evaluation uses a separate archived solver configuration described in Appendix~\ref{app:ood}.

The boundary in \eqref{eq:fdbc} is an asymptotic finite-domain approximation. Refining a fixed domain does not test truncation, and Rannacher startup alone does not guarantee accurate shortest-maturity Gamma for every strike \citep{r06}. The distinct checks in Appendix~\ref{app:verification} address these issues empirically.

The carrier integral $A_0$ is evaluated from the coefficient generator, not interpolated sensors, with 64-point Gauss--Legendre quadrature. When strike-line clipping is inactive,
\begin{equation}
A_0(\tau)=\sigma_0^2\left[(1-\gamma/2)^2\tau+(1-\gamma/2)\gamma\frac{\tau^2}{T}+\frac{\gamma^2\tau^3}{3T^2}\right].
\label{eq:analyticA0}
\end{equation}
This is regular at $\gamma=0$. The quadrature integrates the polynomial exactly in exact arithmetic, subject to floating-point rounding in numerical evaluation. Equation~\eqref{eq:quadrature} treats a general effective line-integration error.

\FloatBarrier
\subsection{Architecture, training, and checkpoint selection}
\label{app:training}
\paragraph{DeepONet architecture and normalized coordinates.}
For the normalized FI parameterization, let $\xi$ collect the sampled payoff, sampled volatility, and contract scalars.
The fused branch features $b_{\theta,j}(\xi)$ and query-trunk features $q_{\theta,j}$ produce the scalar logit
\begin{equation}
f_\theta(\xi;m,\bar t)=\sum_{j=1}^{p}b_{\theta,j}(\xi)q_{\theta,j}(m,\bar t)+b_0,
\quad m=S/K,\quad \bar t=t/T,\quad c=V/K.
\label{eq:deeponet}
\end{equation}
Here $p$ is the latent width and $b_0$ is a learned scalar bias.
The relation between $f_\theta$ and the correction is specified in the FI training protocol below; the canonical E0 baseline uses physical query coordinates as described separately.
The principal models have hidden/latent width 128, tanh activations, and 258,305 trainable parameters. The network combines separate payoff, volatility, and scalar branches with a fusion map and a query trunk, as in \eqref{eq:deeponet}. Each of the payoff, volatility, scalar, fusion, and trunk MLPs has three biased linear layers, with tanh after the first two. Their input dimensions are respectively $101,231,5,384,2$; hidden and output dimensions are 128. The branch features are concatenated before fusion; the fusion--trunk dot product has a separate learned scalar bias.

\paragraph{DeepONet: canonical unnormalized baseline (E0).}
E0 uses physical $(S,t)$ queries and price $V$, with frozen statistical standardization of branch inputs, query coordinates, and output. The E0 output normalizer is fit on the final Rannacher training labels only, using population standard deviation plus $10^{-8}$. The resulting mean and scale are $40.613327773255371$ and $45.26865379343748$, respectively. Validation, test, and OOD labels are excluded from the fit. Training uses 5,000 Adam updates at $10^{-3}$, reloads the best raw-phase validation checkpoint, and uses a new Adam optimizer for 2,000 updates at $10^{-4}$. Adam has betas $(0.9,0.999)$, epsilon $10^{-8}$, zero weight decay, and no AMSGrad. For each principal seed, \texttt{default\_rng(seed)} first draws all $7{,}000\times32$ surface indices and then all $7{,}000\times512$ query indices with replacement; each update shares its query vector across surfaces. Training is float32 on one CPU thread with deterministic Torch algorithms and seeded Python, NumPy, and Torch initialization. Validation pools standardized-price MSE over all 32 validation surfaces at each phase start, its first update, and every 100 updates: 74 evaluations. Strict improvement selects the checkpoint, ties retain the earlier candidate, and E0 retains the raw-phase best score during continuation. There is no early stopping, gradient clipping, scheduler, warmup, auxiliary loss, RA, FAL, or PP in this baseline.

\paragraph{Canonical PI-DeepONet.}
The PDE residual of a destandardized dimensionless price prediction $\widehat c$ is
\begin{equation}
R_{\rm el}[\widehat c]=\widehat c_{\bar t}+\frac T2a(Km,T\bar t)m^2\widehat c_{mm}+Trm\widehat c_m-Tr\widehat c,
\quad \Pa(K\widehat c)=-(K/T)R_{\rm el}[\widehat c].
\label{eq:piresidual}
\end{equation}
The PDE loss $\mathcal L_{\rm PDE}$ is the mean squared value of
$R_{\rm el}[\widehat c]$ at the sampled collocation points.
The objective is $\mathcal L_{\rm data}+\mathcal L_{\rm PDE}+\mathcal L_{\rm terminal}+\mathcal L_{\rm boundary}$ with four unit weights. Each of the five principal seeds runs 5,000 Adam updates at $10^{-3}$, reloads the lowest-validation-error raw-phase checkpoint, creates a \emph{new} Adam optimizer, and continues for 2,000 updates at $10^{-4}$. Adam has betas $(0.9,0.999)$, epsilon $10^{-8}$, and zero weight decay.

Each update samples 32 training surfaces and a shared vector of 512 supervised grid-query indices, with replacement. Independently for each surface, it samples 512 PDE points
$m=0.01+U(220/K-0.01)$, $\bar t=0.975U$;
512 terminal points $m=(220/K)U$, $\bar t=1$;
and 512 times $U\in[0,1)$ shared by the boundaries $m=0$ and $m=220/K$. Uniform draws are independent apart from the stated sharing. There is no near-strike enrichment. Supervised draws use \texttt{default\_rng(seed)}; float32 collocation coordinates use \texttt{default\_rng(seed+5051005)}. Each loss has its own mean-square reduction; the boundary loss averages concatenated lower and upper samples. Four sequential backward calls accumulate gradients before one optimizer update.

Derivatives are with respect to raw $m=S/K$ and elapsed $\bar t$, with input standardization and output destandardization inside the differentiation graph. Volatility is bilinearly interpolated from reconstructed sensors, clamping coordinates to the sensor rectangle, and \emph{then squared}. Validation is standardized price MSE on complete 32-surface development-validation grids at each phase start and every 100 updates (72 evaluations in total). Continuation resets the best-validation state; strict improvement selects, ties retain the earlier checkpoint. ID/OOD results do not enter selection. This comparator has no RA, FAL, or PP.

\paragraph{Historical FI price-loss algebra and later inference rule.}
Let $f_\theta$ be the standardized scalar network logit, with frozen dimensionless-price normalization
\begin{equation}
\mu=0.5017557622056907,\qquad s_y=0.6236731671283599.
\end{equation}
The historical E5 training forward is
\begin{align}
r_\theta&=\mu+s_yf_\theta,\quad z_v=\Cref/K+\chi r_\theta,
\quad y_n=(V_{\rm FD}/K-\mu)/s_y,\nonumber\\
\mathcal L_{\rm raw}&=\operatorname{mean}\left[\left((z_v-\mu)/s_y-y_n\right)^2\right],\label{eq:filoss}\\
\mathcal L_{\rm cont}&=\operatorname{mean}\left[\left((\FAL_{\delta_v}(z_v;L/K,U/K)-\mu)/s_y-y_n\right)^2\right],\nonumber
\end{align}
with $\delta_v=0.002$. For $\chi>0$, define the numerical-label correction
$R_{\rm FD}^*=(V_{\rm FD}-C^{\mathrm{ref}})/\chi$.
The pointwise raw-stage loss in \eqref{eq:filoss} then equals
$\chi^2(R_\theta-R_{\rm FD}^*)^2/(K^2s_y^2)$.
Thus reconstructed-price supervision implicitly weights
squared numerical-correction error by $\chi^2$, without an
additional user-specified maturity weight.
The numerical target differs from the exact correction by
$R_{\rm FD}^*-R^*=(V_{\rm FD}-V)/\chi$; comparison with the
continuous target therefore also involves reference error.
This identity applies before FAL and does not carry unchanged
through the nonlinear continuation loss. E5 uses 5,000 Adam updates at $10^{-3}$ and 2,000 at $10^{-4}$, with 32-surface and 512-query batches and validation-price selection. There is no direct correction-label MSE, extra user-specified $\chi$ point weight, near-strike loss weight, Greek loss, PDE loss, or financial penalty in final E5. Terminal nodes remain eligible; $\chi=0$ removes their learned correction. Ordinary FAL was applied during continuation \emph{including terminal nodes}. Consequently, at terminal nodes the continuation-stage FAL output is parameter-independent, so these samples contribute no gradient to the learned network despite remaining in the batch. The terminal-preserving output in \eqref{eq:fi} is a later evaluation change with frozen selected weights, not a retrained architecture. FAL-only E3 interprets the destandardized network logit directly as normalized price and also applies FAL in continuation.

The principal seeds are $\{2026,3407,5201,7713,9109\}$. The separate kink-treatment seeds are $\{20260719,20260720,20260721\}$. Training specifics of development studies or historical baselines are not assumed to be identical merely because an update count is shared. Alternative kink treatments include gradient-balanced soft financial penalties, diffusion-distance coordinate enlargement based on $\log(S/K)/\sqrt{A_0}$ with near-terminal regularization, and near-strike oversampling. The coordinate intervention is motivated by adaptive-scale inputs \citep{r59}; it is not a full reproduction of every such method.
\paragraph{Matched carrier--rescaling mechanism ablation.}
To separate the analytic carrier from maturity rescaling, we use the generalized reconstruction
\begin{equation}
z_{\theta,\alpha}
=
C^{\mathrm{ref}}+g_\alpha(\chi)R_\theta,
\qquad
g_\alpha(\chi)=
\begin{cases}
1, & \alpha=0,\\
\chi^\alpha, & \alpha>0,
\end{cases}
\label{eq:alpha_ablation}
\end{equation}
with
\[
\alpha\in\{0,\tfrac12,1,\tfrac32\}.
\]
The learned arms are denoted A0, A05, A1, and A15, respectively. B0 is the normalized direct-price FAL control. All arms use the same frozen H1 data, width-128 architecture, input/output statistics, five principal seeds, surface/query sampling schedules, $5{,}000+2{,}000$ Adam-update budget, validation-price checkpoint rule, and FAL continuation protocol. No PDE loss, Greek loss, near-strike weighting, oversampling, scheduler, or PP is introduced.

A0, A05, and A15 were trained as fifteen new paired runs. Historical A1/E5 and B0/E3 checkpoints were reused only after exact generalized-path replay, initialization, data, checkpoint, and schedule audits passed. No held-out metric was used to alter the exponent set or training protocol. The primary mechanism analysis uses pre-FAL nonterminal outputs; a second evaluation applies the same terminal-preserving post-FAL convention to all arms. This distinction prevents FAL or terminal assignment from being attributed to the residual parameterization itself.
\paragraph{Selected checkpoints and replay scope.}
In the principal seed order above, the selected global updates are $(6900,7000,6900,6900,6900)$ for refitted E0, $(7000,6900,7000,7000,6600)$ for canonical PI, and $(6800,7000,6800,6600,6600)$ for FI. The E0 results in Tables~\ref{tab:classical} and~\ref{tab:principal} use the output normalizer fitted to the final Rannacher training labels. Their configurations, training logs, checkpoint hashes, and selected-state replay are recorded separately from the historical synthetic-core archive. That archive retains the earlier E0 states and output normalizer, so its replay does not reproduce the current E0 column. Appendix~\ref{app:context} distinguishes these versions and the separate OOD replay extension.

\FloatBarrier
\subsection{Deterministic grid post-processing}
\label{app:pp}
For each float64 array with shape $N\times41\times81$, ordered (sample,time,spot), PP performs exactly two cycles. Each cycle applies: call-price lower/upper clipping and $S=0$ overwrite; terminal payoff overwrite; reverse-calendar-time cumulative maximum; unit-weight pooled-adjacent-violators (PAVA) repair of forward spot slopes; slope clipping to $[0,1]$; left-anchored reconstruction; and bounds/zero-spot enforcement. After two cycles, apply a final terminal overwrite, time-monotonicity pass, and bounds/boundary enforcement. There is no convergence test or repair tolerance; $10^{-8}$ is the post hoc violation threshold.

Only the slope-PAVA step is an isotonic projection. The whole noncommuting sequence is order-dependent, not a simultaneous metric projection onto the intersection of all constraints. It acts after inference, outside backpropagation, and changes no parameters, optimizer state, checkpoint selection, or ranking. The recorded five-model re-evaluation reproduces the reported PP outputs. It does not give continuous-surface no-arbitrage certification or independently differentiated AD Greeks. In a time-inhomogeneous pricing problem, a backward surface indexed by valuation time must also be distinguished from a family of contract maturities at a fixed valuation date; the time-repair operation is not used as a general calendar-arbitrage theorem.

Inference forms normalized queries, evaluates $A_0$ and $\Cref$, predicts $R_\theta$, reconstructs $z_\theta$, applies unit-consistent FAL, and assigns terminal/zero-spot values. PP is an optional final grid transformation. Eligible linear-map replacements leave this pricing pipeline unchanged.

\FloatBarrier
\subsection{Metric definitions, masks, and coefficient conventions}
\label{app:metrics}
For a specified reference vector $\mathbf V$ and prediction $\widehat{\mathbf V}$,
\begin{equation}
\RelL=\frac{\norm{\widehat{\mathbf V}-\mathbf V}_2}{\norm{\mathbf V}_2},\quad
\operatorname{RMSE}=\sqrt{N^{-1}\sum_i|\widehat V_i-V_i|^2},\quad
\operatorname{MAE}=N^{-1}\sum_i|\widehat V_i-V_i|.
\end{equation}
Price p95 is a pooled absolute-error quantile, not relative price error. Principal price $\RelL$ pools $32\times41\times81=106,272$ points before computing one norm ratio per seed. Near-strike price uses $|S/K-1|\le0.05+8\epsilon_{64}$, retains terminal nodes, and has no explicit spatial-edge exclusion. Seed means and sample SDs are computed afterward.

The principal Greek predictions are physical-spot \texttt{np.gradient(edge\_order=2)} of saved prices, applied twice for Gamma. References come from the fixed 24-surface L3 ID/test subset. The historical near-strike mask intersected with $\tau>0$ retains 3,160 finite prediction/reference pairs per Greek and seed, excluding 79 terminal pairs. The historical including-terminal mask has 3,239 pairs and is confined to Table~\ref{tab:historical} and its attribution discussion. The principal PDE residual RMS pools dimensionless residuals computed from the
saved pre-PP output with the common interior-grid FD evaluator
(98,592 points per seed). The terminal row itself is not scored, but the recorded centered time stencil at the last interior level $\bar t=0.975$ uses the $\bar t=1$ terminal price; consequently the near-maturity PDE residual RMS depends on the terminal-value
convention. This principal FD evaluation is distinct from the PDE loss used in PI training
and from the AD evaluation used in the controlled PDE-loss
factorial. Price RelL2 is dimensionless.
Absolute price errors, including RMSE, MAE, and absolute-error
p95, have price units; Delta errors are dimensionless, and
Gamma errors have inverse-price units.

The archived $\tau=0.025$ profile analysis aggregates all 24 L3 surfaces and five seeds: absolute native errors are interpolated to the fixed common moneyness grid, then equally weighted medians/IQRs are formed over 120 surface--seed values per coordinate. Both price and Gamma there use the L3 population, unlike the principal 32-surface price table. IQRs are dispersion, not confidence intervals; no surface is selected by its model error.

Reference refinement in Table~\ref{tab:refinement} excludes spatial edge columns, uses $|S/K-1|\le0.05$, and defines ATM-short by $0<\tau\le0.05$. Carrier-only regional price comparison uses the near-strike mask and $0<\tau\le0.05$ for pooled regional $\RelL$ per seed. The shortest-time check uses only $\tau=0.025$. The PI coefficient-sensitivity mask instead has $0<\tau\le0.10$ within the legal sensor rectangle. Market ATM-short uses $|\log(K/F)|\le0.03$ and $7\le\mathrm{DTE}\le30$. These masks and populations are not interchangeable.

For a constraint $g_j\ge0$, violation frequency at threshold $\varepsilon_j$ counts $g_j<-\varepsilon_j$; mean severity averages $(-g_j)^+$. The principal financial magnitude sums seven native-array mean severities: lower and upper price-bound violations, negative price, negative forward spot slope, positive elapsed-time price increment, negative FD Gamma, and absolute terminal discrepancy. Time increments are not divided by step size. The factorial averages nine rule means, adding lower/upper FD-Delta violations. The threshold $10^{-8}$ defines frequencies, not magnitudes. These mixed-unit scores are protocol-specific, not a calibrated arbitrage scale.

\paragraph{Output-stage and population registry.}
Figure~\ref{fig:app_protocols} provides a reading guide to the principal
protocols. ``Pre-PP'' alone is not a complete output specification:
FI is post-FAL, mechanism comparisons may be pre-FAL, and the complete
OOD study retains its historical FI adapter.
The canonical PI baseline, the normalized-RA PDE-loss factorial,
and the selected-parent realization study are different experiments.
Neither AD versus FD nor nonterminal versus including-terminal Greek
statistics are interchangeable. Unless a caption explicitly states a
paired estimator, a quoted improvement computed from a summary table
is a ratio of reported means, not a mean within-seed improvement.
\begin{figure}[!htbp]
\centering
\begingroup\fontfamily{ptm}\selectfont\mathversion{figuretimes}

\begin{tikzpicture}[x=1cm,y=1cm]
\node[appbox,text width=3.7cm,draw=appNavy] (raw) at (2,0)
{\textbf{Pre-FAL}\\$z_\theta=\Cref+\chi R_\theta$};
\node[appbox,text width=3.7cm,draw=appRed] (fi) at (6.6,0)
{\textbf{FI before PP}\\FAL + terminal / zero-spot rule};
\node[appbox,text width=3.7cm,draw=appGray] (pp) at (11.2,0)
{\textbf{Optional grid PP}\\Deterministic, weights fixed};
\draw[appflow] (raw) -- (fi);\draw[appflow] (fi) -- (pp);
\node[anchor=west,font=\footnotesize\bfseries] at (0,-1.05) {Study};
\node[anchor=west,font=\footnotesize\bfseries] at (4,-1.05) {Population / stage};
\node[anchor=west,font=\footnotesize\bfseries] at (9.15,-1.05) {Estimator / scope};
\draw[appGray,line width=.6pt] (0,-1.35)--(13.25,-1.35);
\foreach \yy in {-2.1,-2.9,-3.7,-4.5} {\draw[gray!30] (0,\yy)--(13.25,\yy);}
\node[anchor=west,font=\footnotesize] at (0,-1.7) {Principal ID};
\node[anchor=west,font=\scriptsize,align=left] at (4,-1.7) {32 price surfaces; 24 L3 for Greeks\\FI after FAL; before PP};
\node[anchor=west,font=\scriptsize,align=left] at (9.15,-1.7) {Five-seed mean $\pm$ sample SD\\FD Greeks, $\tau>0$};
\node[anchor=west,font=\footnotesize] at (0,-2.5) {Mechanism ablation};
\node[anchor=west,font=\scriptsize,align=left] at (4,-2.5) {B0 / A0 / A05 / A1 / A15\\Pre-FAL and post-FAL reported separately};
\node[anchor=west,font=\scriptsize,align=left] at (9.15,-2.5) {Five paired seeds\\Stage-dependent rankings};
\node[anchor=west,font=\footnotesize] at (0,-3.3) {Complete OOD};
\node[anchor=west,font=\scriptsize,align=left] at (4,-3.3) {320 canonical-clipped surfaces\\Historical FI output adapter};
\node[anchor=west,font=\scriptsize,align=left] at (9.15,-3.3) {Pooled price norm per seed\\No ID/OOD degradation ratio};
\node[anchor=west,font=\footnotesize] at (0,-4.1) {PDE-loss factorial};
\node[anchor=west,font=\scriptsize,align=left] at (4,-4.1) {Normalized RA; paired objectives\\FAL is an output-map factor};
\node[anchor=west,font=\scriptsize,align=left] at (9.15,-4.1) {AD PDE residual and Greek MAE\\Distinct from principal PI};
\node[anchor=west,font=\footnotesize] at (0,-4.9) {Realization / market};
\node[anchor=west,font=\scriptsize,align=left] at (4,-4.9) {One fixed dense parent; selected QDO\\Market uses constant-volatility inputs};
\node[anchor=west,font=\scriptsize,align=left] at (9.15,-4.9) {Separate synthetic AD Greeks\\Market mid $\ne$ operator reference};
\end{tikzpicture}

\endgroup
\caption{Output-stage and protocol map. Arrows show the FI inference order,
not a guarantee that errors decrease at every stage. DeepONet and canonical
PI are direct-price comparators and do not follow the FI carrier--FAL path.
The table identifies the principal distinctions needed to read the appendix:
FD and AD, nonterminal and including-terminal masks, per-seed summaries and
selected-parent comparisons, and market versus same-input operator references.
See Appendix~\ref{app:metrics} for exact masks and aggregation.}
\label{fig:app_protocols}
\end{figure}

\paragraph{PI coefficient-path sensitivity.}
PI interpolates clipped $21\times11$ volatility sensors before squaring, whereas the reference evaluates and clips the generator pointwise. For fixed selected weights,
\begin{equation}
q_{\rm coef}=\frac{\operatorname{RMS}(R_{\rm sensors}-R_{\rm generator})}{\operatorname{RMS}(R_{\rm sensors})}
\label{eq:coefperturb}
\end{equation}
lies in $[0.0103,0.0148]$ on the admissible validation region and $[0.00163,0.00210]$ on this check's ATM-short subset. The direct PDE-residual perturbation is small on these fixed predictions, but it does not bound the price effect of retraining with a different coefficient path. FI's generator-based carrier access is also distinct from PI's
sensor-interpolated PDE residual.

\paragraph{Finite-precision admissibility.}
An initial normalized float32 evaluation reported apparent lower-bound violations at about 2.0\% of FI and 11.0\% of FAL-only predictions. Double/higher-precision re-evaluation found no genuine interval violation at those points. This verifies those outputs, not every numerical protocol or cross-point shape condition.

\clearpage
\section{Additional Numerical Experiments}
\label{app:results}
The additional experiments examine reference accuracy, representation choices, supervision, and generalization. Each study retains its own model population, output stage, and error estimator; the formal comparisons and development studies are identified below.
\FloatBarrier
\subsection{Reference accuracy and analytical checks}
\label{app:verification}

The canonical clipped variance obeys $0.05^2\le a\le1$. In log-moneyness, the derivative of the unclipped spatial factor contains $2e^x\operatorname{sech}^2(2(e^x-1))$, which is bounded on $\R$. Clipping is Lipschitz; squaring preserves this property on the bounded nonnegative range. Thus the coefficient part of the value assumptions holds. At the strike, when clipping is inactive,
\begin{equation}
\sigma(K,T-u)=\sigma_0(1-\gamma/2+\gamma u/T).
\end{equation}
On the closure of training support this lies in $[0.075,0.5625]$ for $0\le u\le T$, strictly inside $[0.05,1]$. A smooth neighborhood of the strike therefore exists. The carrier is independent of $\beta$, while
\begin{equation}
a_* =\sigma_0^2(1-\gamma/2)^2,\qquad b_*=4\beta a_*.
\label{eq:abstar}
\end{equation}
Local profiles depend on skew even though the carrier does not. These facts concern the canonical continuous field, not every historical coefficient convention.

Table~\ref{tab:refinement} reports the primary check, which refines $(N_S,N_t)=(880,320)$ to $(1760,640)$ with the 32 ID surfaces and selected models fixed. It excludes the 82-surface parameter-support OOD archive with partly model-error-based selection. FI--PI ordering is preserved in all five aggregate metrics and all 25 seed--metric comparisons. Joint refinement does not independently identify space/time order; main results retain the base reference.

\begin{table}[htbp]
\caption{Reference-solver sensitivity on the fixed 32 ID surfaces: p95 absolute base-to-doubled-resolution discrepancies in the corresponding physical units. ATM-short uses $0<\tau\le0.05$. This is a separate 32-surface sensitivity population, not the 24-surface Greek population. Discrepancies are not certified error bounds.}
\label{tab:refinement}
\centering\small
\begin{tabular}{lrrr}
\toprule
Slice & Price & Delta & Gamma\\
\midrule
All admissible points & $4.776101\!\times\!10^{-4}$ & $5.962378\!\times\!10^{-5}$ & $6.385583\!\times\!10^{-6}$\\
Near strike & $2.145785\!\times\!10^{-3}$ & $3.096243\!\times\!10^{-4}$ & $1.103966\!\times\!10^{-4}$\\
ATM-short & $4.730651\!\times\!10^{-3}$ & $1.654274\!\times\!10^{-3}$ & $1.508516\!\times\!10^{-3}$\\
\bottomrule
\end{tabular}
\end{table}

A separate domain check moves the upper boundary from 605 to 1210 while preserving $\Delta S=0.6875$ and the time step. Near-strike price/Delta/Gamma p95 discrepancies are $5.3701\times10^{-12}$, $9.0501\times10^{-13}$, and $1.3680\times10^{-13}$; the largest checked absolute discrepancy is $4.5276\times10^{-9}$. At $\tau=0.025$, independent spatial refinement gives price/Delta/Gamma p95 differences $0.00337994$, $0.00187478$, and $0.00159277$; independent temporal refinement gives $0.00286804$, $0.000573062$, and $0.000454783$. The combined Gamma discrepancy is $0.00204970$, with no reversal of the nine checked five-seed p95 rankings. The matched refined-reference FI--PI gap for this slice is $0.06301548116729162$, so the unrounded discrepancy $0.002049703724169065$ is approximately $3.25\%$ of that gap. This is not the principal table's different Greek statistic or a continuum error bound.

A smooth uniformly elliptic manufactured first-order-skew family uses a $3\times3$ product of independent spatial/temporal refinements. Observed spatial orders for price, Delta, and Gamma are $1.9976$, $1.9989$, and $2.0039$; temporal orders are $2.0007$, $2.0002$, and $2.0018$. All 18 tested nonzero self-similar rays per quantity support the predicted scales. Median relative discrepancies of scaled profiles from their limits are $0.00256248$, $0.01000658$, and $0.0152407$. These manufactured convergence orders are not transferred to every shortest-maturity production-grid Gamma.

The pre-FAL weighted identities pass on 575 reliable points per verification path. Direct post-FAL differentiation agrees with the complete chain-rule reconstruction at 360 points, with maximum absolute discrepancy $1.7833\times10^{-15}$. Such finite identity checks do not supply the continuum norm in \eqref{eq:schauder}. The archived manufactured-field formula, all refinement grids, and finite-check point lists are needed for independent replay; they are not reconstructed from the aggregate observations.

Figure~\ref{fig:app_reference} compares the reported refinement
discrepancies by region and the separately measured manufactured-solution
orders. Different panels retain their own units and numerical populations.
\begin{figure}[!htbp]
\centering
\begingroup\fontfamily{ptm}\selectfont\mathversion{figuretimes}

\begin{tikzpicture}
\begin{groupplot}[appsmall,group style={group size=3 by 1,horizontal sep=1.6cm},
 xtick={1,2,3},xticklabels={All,Near,Short},xmin=.6,xmax=3.4,ymode=log,
 xlabel={Evaluation region},ymin=0.000003,ymax=.01]
\nextgroupplot[title={(a) Price},ylabel={p95 discrepancy (price)}]
\addplot[appRed,mark=diamond*] coordinates {(1,.0004776101)(2,.002145785)(3,.004730651)};
\nextgroupplot[title={(b) Delta},ylabel={p95 discrepancy (unitless)}]
\addplot[appBlue,mark=square*,dashed] coordinates {(1,.00005962378)(2,.0003096243)(3,.001654274)};
\nextgroupplot[title={(c) Gamma},ylabel={p95 discrepancy (1/price)}]
\addplot[appNavy,mark=triangle*,densely dotted] coordinates {(1,.000006385583)(2,.0001103966)(3,.001508516)};
\end{groupplot}
\node[font=\scriptsize,align=center,text=appGray] at (6,-1.6)
{Separate manufactured check: spatial orders $1.9976,\ 1.9989,\ 2.0039$; temporal orders $2.0007,\ 2.0002,\ 2.0018$.};
\end{tikzpicture}

\endgroup
\caption{Reference-solver refinement on the fixed 32 ID surfaces, from
$(N_S,N_t)=(880,320)$ to $(1760,640)$, using Table~\ref{tab:refinement}.
``All'' denotes admissible points, ``Near'' the near-strike region, and
``Short'' the ATM-short subset $0<\tau\le0.05$. Each panel uses its own
physical units and a logarithmic vertical axis. Points are pooled p95
base-to-refined discrepancies, not sample means; no confidence intervals
are available from these summaries. Lines connect categories only.
The annotation reports a separate manufactured-solution experiment in
price/Delta/Gamma order. Neither check certifies continuum error. }
\label{fig:app_reference}
\end{figure}

\FloatBarrier
\subsection{Principal comparisons, component ablations, and representation controls}
\label{app:attribution}
Table~\ref{tab:classical} reports means only for compact presentation.
Tables~\ref{tab:principal} and~\ref{tab:ablation} retain the corresponding
five-seed sample standard deviations and full reported precision.
The repeated FI row uses the principal-comparison values; carrier-only
price results are stated below. The E0 column uses the refitted baseline;
the version-specific replay scope is described in
Appendices~\ref{app:training} and~\ref{app:context}.

\begin{table}[htbp]
\caption{Principal five-seed comparison, mean $\pm$ sample SD. Price: 32 held-out ID surfaces. Greeks: nonterminal near-strike FD errors on the fixed 24-surface L3 subset. PDE residual RMS uses the common interior FD evaluator
(98,592 points/seed). PP is excluded. Lower is better; bold marks the lowest mean in each row.}
\label{tab:principal}
\centering\small
\begin{tabular}{lrrr}
\toprule
Metric & DeepONet & PI-DeepONet & FI-DeepONet\\
\midrule
Price $\RelL$ & $0.0059854\pm0.00043$ & $0.0072437\pm0.0012$ & $\mathbf{0.0009645}\pm0.000049$\\
Near price p95 & $1.7967\pm0.169$ & $2.3338\pm0.33$ & $\mathbf{0.15794}\pm0.0073$\\
Delta p95 & $0.15183\pm0.0092$ & $0.15474\pm0.021$ & $\mathbf{0.024518}\pm0.0018$\\
Gamma p95 & $0.033225\pm0.0015$ & $0.035549\pm0.0021$ & $\mathbf{0.0043378}\pm0.000035$\\
PDE Residual RMS & $0.046102\pm0.0075$ & $0.010494\pm0.0015$ & $\mathbf{0.0070251}\pm0.00022$\\
\bottomrule
\end{tabular}
\end{table}

\begin{table}[htbp]
\caption{Principal component ablation (five seeds, mean $\pm$ SD). ``Normalized'' denotes contract normalization; RA is the carrier-residual representation. The complete FI predictor has normalization, RA, and FAL.}
\label{tab:ablation}
\centering\small
\begin{tabular}{lrr}
\toprule
Configuration & Global $\RelL$ & Near-strike price p95\\
\midrule
DeepONet & $0.0059853982\pm0.000433$ & $1.7967389\pm0.1692$\\
Normalized DeepONet & $0.0076\pm0.0013$ & $2.2553\pm0.3486$\\
Normalized + soft financial penalties & $0.0075\pm0.0013$ & $2.2125\pm0.3474$\\
Normalized + FAL & $0.0024\pm0.000488$ & $0.5277\pm0.0581$\\
Normalized + FAL + PP & $0.0023\pm0.000498$ & $0.4581\pm0.0688$\\
FI-DeepONet & $0.0009645\pm0.0000488$ & $0.15794\pm0.0073$\\
FI-DeepONet-PP & $0.001043\pm0.0000455$ & $0.1579\pm0.0073$\\
\bottomrule
\end{tabular}
\end{table}
\begin{table}[htbp]
\caption{Payoff-kink treatments on a common three-seed population before PP, distinct from the principal five-seed population. Entries are means. Lower is better; bold marks the lowest mean in each column.}
\label{tab:kink}
\centering\small
\begin{tabular}{lrrrr}
\toprule
Treatment & Global $\RelL$ & Near price p95 & Near Delta p95 & Near Gamma p95\\
\midrule
Coordinate enlargement &0.001795&0.348724&0.065799&0.025100\\
Near-strike oversampling &0.001811&0.364922&0.085414&0.036080\\
RA & \textbf{0.0009050} & \textbf{0.145524} & \textbf{0.020482} & \textbf{0.003367} \\
\bottomrule
\end{tabular}
\end{table}
The kink intervention in Table~\ref{tab:kink} compares price representation with alternative query representation and sampling. Its results concern the tested controls and do not establish superiority over adaptive-coordinate or adaptive-sampling architectures in general.

On the principal 32-surface ID price population the deterministic carrier alone has global $\RelL=0.00688952$ and near-strike price p95 $0.392665$. The full predictor improves these by $86.00\%$ and $59.78\%$. Carrier-only is a diagnostic rather than a trained baseline; the matched learned-model ablation below separately identifies the carrier and maturity-rescaling contributions. On the experiment-specific ATM-short price mask, the analytic carrier is already highly accurate and the learned correction raises $\RelL$ by approximately $40.46\%$; the improvement is therefore not uniform over every maturity slice. Together with the global-rank result below, these observations prevent interpreting the asymptotic construction as proof that the learned correction is globally simpler or that $\alpha=1$ is training-optimal. Historical including-terminal Greek attribution is retained below rather than mixed into nonterminal results.

\subsubsection{Matched carrier--rescaling mechanism ablation}
\label{app:mechanism_ablation}

Within the carrier arms, the pre-registered five-seed ablation varies the reconstruction exponent in
\[
z_{\theta,\alpha}=C^{\mathrm{ref}}+g_\alpha(\chi)R_\theta,
\]
with $g_0(\chi)=1$ and $g_\alpha(\chi)=\chi^\alpha$ for $\alpha>0$, as in \eqref{eq:alpha_ablation}. B0 is a separate direct-price control without the carrier. Table~\ref{tab:alpha_prefal} reports the pre-FAL nonterminal mechanism metrics.

\begin{table}[htbp]
\caption{Matched carrier--rescaling ablation before FAL; five-seed mean $\pm$ sample SD. B0 is the normalized direct-price control, and A0/A05/A1/A15 use the same carrier with $\alpha=0,\frac12,1,\frac32$. Lower is better.}
\label{tab:alpha_prefal}
\centering\small
\resizebox{\linewidth}{!}{
\begin{tabular}{lrrrr}
\toprule
Arm &
Global price $\RelL$ &
ATM-short price $\RelL$ &
Near Delta p95 &
Near Gamma p95\\
\midrule
B0
& $0.04680\pm0.00996$
& $0.1752\pm0.0228$
& $0.1362\pm0.0194$
& $0.03690\pm0.00274$\\
A0
& $0.01975\pm0.00117$
& $0.04846\pm0.01045$
& $0.02388\pm0.00335$
& $0.005722\pm0.000121$\\
A05
& $0.01345\pm0.00094$
& $0.01974\pm0.00350$
& $\mathbf{0.02052}\pm0.00233$
& $0.005416\pm0.0000547$\\
A1
& $0.01292\pm0.000520$
& $0.01270\pm0.000747$
& $0.02508\pm0.00174$
& $0.005331\pm0.0000529$\\
A15
& $\mathbf{0.01248}\pm0.000744$
& $\mathbf{0.01028}\pm0.000216$
& $0.03025\pm0.00130$
& $\mathbf{0.005232}\pm0.00000563$\\
\bottomrule
\end{tabular}}
\end{table}

Adding the strike-matched carrier without maturity rescaling (B0$\rightarrow$A0) lowers the mean pre-FAL global price, Delta, and Gamma errors by approximately $57.8\%$, $82.5\%$, and $84.5\%$, respectively; the corresponding paired directions improve in all five seeds. Thus the carrier has an empirical contribution beyond normalization and FAL even before the maturity gate is introduced.

Adding the linear gate (A0$\rightarrow$A1) lowers global price $\RelL$ by approximately $34.6\%$ and ATM-short price $\RelL$ by $73.8\%$, while near-strike Gamma p95 decreases by about $6.8\%$. Near-strike Delta p95 instead increases by about $5.0\%$. Hence maturity rescaling has a clear but metric-dependent finite-budget effect.

For $\alpha>0$, the raw correction vanishes at maturity by construction. The pre-FAL terminal maximum error is $5.03\pm0.99$ for A0 but approximately $2.26\times10^{-5}$ for A05, A1, and A15. This terminal behavior is reported separately from the nonterminal mechanism metrics.

Table~\ref{tab:alpha_postfal} evaluates the same checkpoints after the common FAL and terminal-preserving output rule.

\begin{table}[htbp]
\caption{Matched carrier--rescaling ablation after FAL and the common terminal-preserving inference rule; five-seed mean $\pm$ sample SD. These values use a common evaluation stage for all arms and are distinct from the historical component table above. Lower is better.}
\label{tab:alpha_postfal}
\centering\small
\resizebox{\linewidth}{!}{
\begin{tabular}{lrrrrr}
\toprule
Arm &
Price $\RelL$ &
Near price p95 &
Delta p95 &
Gamma p95 &
FD PDE RMS\\
\midrule
B0
& $0.002336\pm0.000493$
& $0.4581\pm0.0688$
& $0.08459\pm0.01166$
& $0.02535\pm0.00265$
& $0.02577\pm0.00291$\\
A0
& $0.001179\pm0.000234$
& $0.1616\pm0.0168$
& $0.02289\pm0.00394$
& $0.004527\pm0.000125$
& $0.008158\pm0.00131$\\
A05
& $\mathbf{0.0008339}\pm0.0000657$
& $\mathbf{0.1424}\pm0.0162$
& $\mathbf{0.01959}\pm0.00249$
& $0.004354\pm0.0000506$
& $\mathbf{0.006339}\pm0.000232$\\
A1
& $0.0009645\pm0.0000488$
& $0.1579\pm0.00730$
& $0.02452\pm0.00178$
& $\mathbf{0.004338}\pm0.0000353$
& $0.007025\pm0.000224$\\
A15
& $0.001192\pm0.0000705$
& $0.1677\pm0.00768$
& $0.02982\pm0.00132$
& $0.004413\pm0.0000859$
& $0.007912\pm0.000220$\\
\bottomrule
\end{tabular}}
\end{table}

No single exponent minimizes every finite-budget metric. A15 has the lowest pre-FAL global and ATM-short price errors and the lowest pre-FAL Gamma p95, whereas A05 has the lowest pre-FAL Delta p95 and the lowest post-FAL price, near-price, Delta, and PDE errors; A1 has the lowest post-FAL Gamma p95. These observations do not contradict Proposition~\ref{prop:profile}: $\alpha=1$ is critical for a bounded, nondegenerate value target in the stated asymptotic regime, not a theorem of metric-wise training optimality.

Several arm rankings change between the pre- and post-FAL stages. FAL therefore interacts nontrivially with the residual parameterization, and post-FAL differences are not attributed to the exponent alone.

Figure~\ref{fig:app_exponents} shows the post-FAL metric trade-offs
with the reported sample standard deviations. These error bars describe
variation across five seeds; they are not confidence intervals and do not
establish significance of the small differences between exponents.
\begin{figure}[!htbp]
\centering
\begingroup\fontfamily{ptm}\selectfont\mathversion{figuretimes}

\begin{tikzpicture}
\begin{groupplot}[appsmall,group style={group size=3 by 1,horizontal sep=1.6cm},
 xmin=-.12,xmax=1.62,xtick={0,.5,1,1.5},xticklabels={$0$,$1/2$,$1$,$3/2$},xlabel={Exponent $\alpha$},
 error bars/y dir=both,error bars/y explicit]
\nextgroupplot[title={(a) Global price},ylabel={Price $\RelL$ ($\times10^{-3}$)},ymin=.6,ymax=1.55,ytick={.8,1,1.2,1.4}]
\addplot[appRed,mark=diamond*,error bars/.cd,y dir=both,y explicit] coordinates
{(0,1.179)+-(0,.234)(.5,.8339)+-(0,.0657)(1,.9645)+-(0,.0488)(1.5,1.192)+-(0,.0705)};
\nextgroupplot[title={(b) Delta},ylabel={Near-strike FD p95},ymin=.014,ymax=.034,ytick={.015,.02,.025,.03},yticklabels={.015,.020,.025,.030}]
\addplot[appBlue,mark=square*,dashed,error bars/.cd,y dir=both,y explicit] coordinates
{(0,.02289)+-(0,.00394)(.5,.01959)+-(0,.00249)(1,.02452)+-(0,.00178)(1.5,.02982)+-(0,.00132)};
\nextgroupplot[title={(c) Gamma},ylabel={FD p95 ($\times10^{-3}$/price)},ymin=4.2,ymax=4.75,ytick={4.2,4.4,4.6}]
\addplot[appNavy,mark=triangle*,densely dotted,error bars/.cd,y dir=both,y explicit] coordinates
{(0,4.527)+-(0,.125)(.5,4.354)+-(0,.0506)(1,4.338)+-(0,.0353)(1.5,4.413)+-(0,.0859)};
\end{groupplot}
\end{tikzpicture}

\endgroup
\caption{Post-FAL exponent ablation with the common terminal-preserving output
rule, from Table~\ref{tab:alpha_postfal}. Points and error bars are five-seed
means and sample standard deviations for A0, A05, A1, and A15.
The lowest mean price and Delta errors occur at $\alpha=1/2$, while the
lowest mean Gamma error occurs at $\alpha=1$. Vertical axes are zoomed;
Greek units differ. Connecting lines are visual guides between the four tested
exponents, not an interpolated optimization landscape. The critical
asymptotic value scaling at $\alpha=1$ is not a claim of finite-budget
optimality for every metric.}
\label{fig:app_exponents}
\end{figure}

The global linear-reconstruction experiment fits the mean and basis using only 512 training surfaces, with a common 3,240-point nonterminal output grid and uniform evaluator weights. It compares $v_{\rm full}=V/K$, $w_{\rm corr}=(V-\Cref)/K$, and $r_{\rm corr}=w_{\rm corr}/\chi$. It does not replace this common-coordinate problem with irregular contract-dependent ATM subsets.

\begin{table}[htbp]
\caption{Global linear reconstruction: energy ranks and held-out error ratios relative to normalized full price at the same rank. Ratios below one would mean easier reconstruction.}
\label{tab:rank}
\centering\small
\begin{tabular}{lrrrrr}
\toprule
Representation & 99\% rank & 99.9\% rank & $p=8$ & $p=32$ & $p=128$\\
\midrule
Full normalized price $v_{\rm full}$ &1&3&1&1&1\\
Pricing correction $w_{\rm corr}$ &8&16&31.666&37.484&60.457\\
Rescaled correction $r_{\rm corr}$ &13&29&62.013&133.118&291.398\\
\bottomrule
\end{tabular}
\end{table}
For a square-integrable centered Hilbert-space random output $R$, with covariance eigenvalues $\lambda_1\ge\lambda_2\ge\cdots$, the optimal rank-$p$ mean-square reconstruction error is
\begin{equation}
\inf_{\dim E=p}\E\norm{R-P_ER}^2=\sum_{j>p}\lambda_j.
\label{eq:covariance}
\end{equation}
Indeed, the error is $\operatorname{tr}(M_R)-\operatorname{tr}(P_EM_R)$, and the latter trace is maximized by leading eigenvectors. This standard identity \citep{r64} does not order the spectra of full price and correction. Table~\ref{tab:rank} shows that both correction representations require more modes to reach the reported variance thresholds and have higher held-out reconstruction error at the tested ranks. This does not support reduced global linear complexity as the explanation for the observed gains. It does not contradict the exact correction equation or local short-time cancellation; the oracle linear reconstruction error also does not determine the error of a trained branch--trunk network.

\FloatBarrier
\subsection{PDE loss and limited data}
\label{app:objectives}

The five-seed factorial uses a common normalized carrier-residual parameterization.
The price-only arm and the arm with an added PDE loss do not use FAL, whereas the corresponding FAL arms activate the output map
during continuation. The financial factor therefore changes the output
map without adding a soft financial-penalty loss. The data-only control is not the principal
unnormalized DeepONet, and none of these arms redefines the principal FI model.
Separate development studies vary PDE-loss weights, training-gradient calibration,
late learning-rate instability, and registered-mixture, uniform, and PDE-residual-adaptive collocation.

For the formal factorial, all five seeds use paired initialization and supervised schedules, with validation normalized-price MSE checkpoint selection. The PDE-loss arms share 1,024 collocation pairs per update, uniformly sampled over $m\in[0.01,2.75)$ and $\bar t\in[0,0.975)$. Their final PDE-loss coefficient is $0.865302726925$: the first 10\% of updates have zero weight, the next 20\% ramp linearly, and the remaining 70\% use the fixed coefficient. All arms reduce the learning rate tenfold at update 3,800, retaining that multiplier in continuation. These settings differ from the canonical comparator.

The separate stabilized collocation study compares a registered random mixture, uniform draws, and a PDE-residual-adaptive bank under the carrier-residual parameterization using development data. The registered mixture allocates probabilities $0.50/0.25/0.25$ to global, near-strike ($m\in[0.95,1.05)$), and late-time ($\bar t\in[0.80,0.975)$) draws. All three schemes retain the cutoff $\bar t<0.975$; none resolves the omitted shortest-maturity PDE region of the canonical comparator. The full scheme comparison uses one seed; the selected uniform scheme has a separate second-seed development check. These results are not a matched collocation-only rerun of canonical PI.

\begin{table}[htbp]
\caption{Paired effects in the normalized carrier-residual factorial. FAL is an output-map change; it adds no soft financial loss. Entries are mean \emph{within-seed} percentage changes, with improved seeds out of five in parentheses. Negative is better. PDE residual RMS and Greeks are evaluated by AD, whereas
Table~\ref{tab:principal} uses the principal FD evaluation for the PDE metric
and FD p95 for Greeks.}
\label{tab:factorial}
\centering\small
\begin{tabular}{lrrrr}
\toprule
Comparison & Price $\RelL$ & PDE residual RMS & Delta MAE & Gamma MAE\\
\midrule
FAL vs price control & $-33.95\ (5)$ & $-8.38\ (5)$ & $-37.69\ (5)$ & $-4.80\ (5)$\\
PDE loss vs price control & $-2.34\ (3)$ & $-34.18\ (5)$ & $-7.87\ (5)$ & $-5.85\ (5)$\\
PDE loss + FAL vs FAL & $+34.38\ (0)$ & $-30.67\ (5)$ & $-3.83\ (4)$ & $-3.33\ (4)$\\
\bottomrule
\end{tabular}
\end{table}

Table~\ref{tab:objectivemeans} reports the corresponding arm means,
which are distinct from the paired percentage-change estimator above.

\begin{table}[htbp]
\caption{Five-seed means for the normalized carrier-residual factorial: $C$ uses price supervision, $F$ adds FAL, $P$ adds the PDE loss, and $FP$ combines them. No arm adds a soft financial loss. Lower is better; bold marks the lowest mean in each column. These estimators differ from the principal comparison. PDE residual RMS is evaluated by AD in this factorial.}
\label{tab:objectivemeans}
\centering\small\setlength{\tabcolsep}{5pt}
\begin{tabular}{lrrrrr}
\toprule
Objective & Price $\RelL$ & PDE residual RMS & Delta MAE & Gamma MAE & Financial\\
\midrule
$C$ &0.001576342&0.006682306&0.005316740&0.000777047&0.002907677\\
$F$ &$\mathbf{0.001036124}$&0.006115663&0.003305758&0.000739836&$\mathbf{0.000087879}$\\
$P$ &0.001525491&0.004392578&0.004876109&0.000731263&0.003298899\\
$FP$ &0.001389139&$\mathbf{0.004239599}$&$\mathbf{0.003172085}$&$\mathbf{0.000714711}$&0.000093171\\
\bottomrule
\end{tabular}
\end{table}

The ratio of means and the mean of paired within-seed changes are different estimators. For $FP$ versus $F$, the former gives a $34.07\%$ price increase and a $30.68\%$ PDE-residual RMS decrease; the latter gives $34.38\%$ and $30.67\%$ in Table~\ref{tab:factorial}. The qualitative trade-off agrees.

In the separate development study with 25\% data, the control has price $\RelL=0.0013530935$, versus $0.0011967161$ with the PDE loss; PDE residual RMS is $0.0059653614$ versus $0.0037237662$. The reported improvements are $11.557\%$ for price, $37.577\%$ for PDE residual RMS, $11.039\%$ for Delta MAE, and $12.052\%$ for Gamma MAE. Improved-seed counts for price, PDE residual, Delta, and Gamma are 4/5, 5/5, 4/5, and 5/5. These observations were used during development and are not independent confirmation. Another adaptation with a reinitialized optimizer lowered PDE residual RMS while increasing price and financial errors and did not change the selected final model.

Table~\ref{tab:scarcity} reports global price errors across six data fractions; Table~\ref{tab:scarcitytails} records the corresponding near-strike price and Gamma tails.

\begin{table}[htbp]
\caption{Six-level limited-data study: mean global $\RelL$ over five seeds, before PP. Additions refer to Normalized DeepONet; this study does not redefine the final RA model. Bold marks the lowest mean within each data fraction.}
\label{tab:scarcity}
\centering\small
\begin{tabular}{lrrrr}
\toprule
Data retained & Normalized & + balanced penalties & + FAL & + FAL + penalties\\
\midrule
100\% & 0.007066 & 0.008253 & \textbf{0.002648} & 0.002684 \\
50\% & 0.007299 & 0.008733 & \textbf{0.002752} & 0.002883 \\
25\% & 0.007526 & 0.008868 & \textbf{0.003306} & 0.003421 \\
12.5\% & 0.009351 & 0.010642 & 0.004504 & \textbf{0.004489} \\
5\% & 0.016960 & 0.017343 & 0.008249 & \textbf{0.007894} \\
1\% & 0.061610 & 0.061183 & 0.030917 & \textbf{0.030302} \\
\bottomrule
\end{tabular}
\end{table}
Twenty deterministic training batches calibrate the aggregate penalty/data gradient ratio to $[0.8,1.2]$; weights are then fixed. This differs from arbitrary hand weighting or a continuously adaptive scheme. FAL benefits all six fractions. Standalone balanced penalties are unfavorable at most fractions; adding them to FAL gives modest global gains at the smallest fractions and improves near-strike tails.

\begin{table}[htbp]
\caption{Near-strike tails under limited data. Each pair compares FAL with FAL plus balanced penalties on Normalized DeepONet. Bold marks the lower value within each metric pair and data fraction.}
\label{tab:scarcitytails}
\centering\small
\begin{tabular}{lrrrr}
\toprule
&\multicolumn{2}{c}{Near price p95}&\multicolumn{2}{c}{Near Gamma p95}\\
Data retained & FAL & FAL + penalties & FAL & FAL + penalties\\
\midrule
100\% & 0.601981 & \textbf{0.566507} & 0.027989 & \textbf{0.026772} \\
50\% & 0.616515 & \textbf{0.561270} & 0.028161 & \textbf{0.025659} \\
25\% & 0.836214 & \textbf{0.796390} & 0.028038 & \textbf{0.026413} \\
12.5\% & 1.182752 & \textbf{1.104384} & 0.031705 & \textbf{0.028233} \\
5\% & 2.246798 & \textbf{2.138333} & 0.035645 & \textbf{0.031917} \\
1\% & 6.899662 & \textbf{6.523885} & 0.071334 & \textbf{0.069443} \\
\bottomrule
\end{tabular}
\end{table}
At both 5\% and 1\% data, FAL-plus-penalty global error improves
in four of five seeds, not all. This supports a benefit from the added
penalties under limited data, not a recommendation that all FI models
should contain soft penalties.

Table~\ref{tab:financial} retains the separate financial aggregates and their paired changes.

The threshold for violation frequency is not used as a mean-severity threshold. Small aggregate values after PP are not continuous-surface certificates. The QDO selection score in Appendix~\ref{app:qselection} is a third, distinct composite and is not substituted for these financial magnitudes.

Figure~\ref{fig:app_supervision} keeps the paired PDE-loss effects
and limited-data study in separate panels. Its displayed points summarize
existing experiments; the connecting lines in the limited-data panel only
guide the eye between tested fractions.
\begin{figure}[!htbp]
\centering
\begingroup\fontfamily{ptm}\selectfont\mathversion{figuretimes}

\begin{tikzpicture}
\begin{groupplot}[apphalf,group style={group size=2 by 1,horizontal sep=1.55cm}]
\nextgroupplot[title={(a) Adding the PDE loss},xmin=-45,xmax=45,xtick={-40,-20,0,20,40},
 xlabel={Mean within-seed change (\%)},ymin=.5,ymax=4.5,ytick={1,2,3,4},
 yticklabels={Gamma MAE,Delta MAE,PDE RMS,Price $\RelL$},legend style={at={(.5,-.31)},anchor=north,legend columns=1}]
\addplot[gray,densely dashed,no marks,forget plot] coordinates {(0,.5)(0,4.5)};
\addplot[only marks,appBlue,mark=square] coordinates {(-5.85,1.08)(-7.87,2.08)(-34.18,3.08)(-2.34,4.08)};\addlegendentry{PDE loss vs price control}
\addplot[only marks,appRed,mark=diamond*] coordinates {(-3.33,.92)(-3.83,1.92)(-30.67,2.92)(34.38,3.92)};\addlegendentry{PDE loss + FAL vs FAL}
\nextgroupplot[title={(b) Separate limited-data study},xmode=log,ymode=log,xmin=.8,xmax=125,ymin=.002,ymax=.09,
 xtick={1,5,25,100},xticklabels={1,5,25,100},xlabel={Training data retained (\%)},ylabel={Global price $\RelL$},
 legend style={at={(.5,-.31)},anchor=north,legend columns=1}]
\addplot[appGray,dashdotted,mark=*] coordinates {(1,.061610)(5,.016960)(12.5,.009351)(25,.007526)(50,.007299)(100,.007066)};\addlegendentry{Normalized DeepONet}
\addplot[appBlue,dashed,mark=square] coordinates {(1,.030917)(5,.008249)(12.5,.004504)(25,.003306)(50,.002752)(100,.002648)};\addlegendentry{Normalized + FAL}
\addplot[appRed,mark=diamond*] coordinates {(1,.030302)(5,.007894)(12.5,.004489)(25,.003421)(50,.002883)(100,.002684)};\addlegendentry{FAL + balanced penalties}
\end{groupplot}
\end{tikzpicture}

\endgroup
\caption{Distinct PDE-loss and limited-data studies. (a) Mean paired
within-seed percentage changes from Table~\ref{tab:factorial}; negative is
better. The normalized-RA factorial uses AD PDE residuals and Greek MAEs.
Adding the PDE loss to FAL lowers the residual but raises price error.
(b) Five-seed mean price errors from Table~\ref{tab:scarcity}; all three
curves start from Normalized DeepONet, not the final RA model.
Both axes in (b) are logarithmic. The source tables do not provide
uncertainty for these plotted summaries, so no error bars are inferred.
Connecting lines in (b) only join tested fractions. The final FI objective
remains price-only.}
\label{fig:app_supervision}
\end{figure}

\begin{table}[htbp]
\caption{Protocol-specific financial aggregates retained for traceability. The principal seven-rule sum and factorial nine-rule mean have different definitions and mixed units.}
\label{tab:financial}
\centering\small
\begin{tabularx}{\linewidth}{Yr}
\toprule
Configuration / comparison & Value\\
\midrule
Principal PI & $0.7917\pm0.1426$\\
Principal FI & $(7.236\pm0.237)\times10^{-4}$\\
Principal FI-DeepONet-PP & $(8.088\pm2.314)\times10^{-10}$\\
Factorial data control &0.002907677\\
Factorial FAL &0.000087879\\
Factorial PDE loss &0.003298899\\
Factorial combined &0.000093171\\
FAL vs control & $-96.94\%$; 5/5 improve\\
PDE loss vs control & $+14.52\%$; 1/5 improve\\
Combined vs FAL & $+6.03\%$; 0/5 improve\\
\bottomrule
\end{tabularx}
\end{table}

\FloatBarrier
\subsection{Matched PI collocation sensitivity}
\label{app:pifairness}
We predeclared and hashed a single collocation-only sensitivity before training. It uses the five canonical PI seeds, 512 PDE points per surface per update, and a fixed equal mixture of global samples with full nonterminal time support and enriched near-strike/short-maturity samples satisfying $|S/K-1|\le 0.05$ and $0<\tau/T\le 0.10$, with float32 endpoint guards fixed in advance. Architecture, normalization, supervised/terminal/boundary draws, the four unit-weight losses, Adam settings, the $5{,}000+2{,}000$ update budget, validation population, and phase-wise validation checkpoint selection are unchanged. No held-out ID/OOD outcome is used to tune the sampler or select checkpoints. All five runs completed the locked 7,000-update protocol before held-out evaluation.

Table~\ref{tab:pifairness} reports the same five metrics and populations as Table~\ref{tab:principal}. Relative to canonical PI, the sensitivity lowers mean price $\RelL$ by $1.59\%$, near-strike price p95 by $5.67\%$, Delta p95 by $7.61\%$, and Gamma p95 by $0.40\%$, while increasing PDE residual RMS by $18.26\%$. The corresponding per-seed improvement counts are $4/5$, $3/5$, $4/5$, $1/5$, and $1/5$, respectively, so the small mean Gamma change in particular is not a seed-stable improvement. The sensitivity-to-FI mean ratios are $7.39$, $13.94$, $5.83$, $8.16$, and $1.77$ for those five metrics. No importance weights map the enriched collocation samples back to the canonical uniform collocation measure; therefore the intervention changes the empirical measure underlying the PDE loss and its spatial/temporal coverage, even though the four explicit loss coefficients remain at unit weight. The reported PDE residual RMS, however, uses the same principal FD evaluator as Table~\ref{tab:principal}. The FI--PI gap persists under this fixed predeclared
near-strike/short-maturity collocation intervention.
A separate upper-boundary target sensitivity produces the same
qualitative conclusion. Because this is one sampler and reuses the held-out populations, it is robustness evidence rather than independent confirmation, a universal PI-fairness result, or a characterization of all possible physics-informed collocation objectives.

\begin{table}[htbp]
\caption{Predeclared five-seed PI collocation sensitivity. Entries are mean $\pm$ sample SD. The historical canonical PI and FI columns are the same models/populations as Table~\ref{tab:principal}. Price p95 includes terminal points; Delta/Gamma p95 exclude the terminal line. Lower is better; bold marks the lowest mean in each row.}
\label{tab:pifairness}
\centering\small
\begin{tabular}{lrrr}
\toprule
Metric & Canonical PI & PI collocation sensitivity & FI-DeepONet\\
\midrule
Price $\RelL$ &$0.0072437\pm0.00123$&$0.0071289\pm0.00138$&$\mathbf{0.00096450}\pm0.0000488$\\
Near price p95 &$2.3338\pm0.327$&$2.2014\pm0.298$&$\mathbf{0.15794}\pm0.00730$\\
Delta p95 ($\tau>0$) &$0.15474\pm0.0205$&$0.14296\pm0.0162$&$\mathbf{0.024518}\pm0.00178$\\
Gamma p95 ($\tau>0$) &$0.035549\pm0.00205$&$0.035406\pm0.00275$&$\mathbf{0.0043378}\pm0.0000353$\\
PDE residual RMS &$0.010494\pm0.00151$&$0.012410\pm0.00223$&$\mathbf{0.0070251}\pm0.000224$\\
\bottomrule
\end{tabular}
\end{table}

Table~\ref{tab:pifairnessseeds} exposes the five selected-state results behind the collocation-sensitivity summary.

\begin{table}[htbp]
\caption{Per-seed PI collocation-sensitivity metrics under the locked intervention. These are the selected checkpoints after the unchanged phase-wise validation rule; lower is better.}
\label{tab:pifairnessseeds}
\centering\scriptsize
\begin{tabular}{rrrrrr}
\toprule
Seed & Price $\RelL$ & Near price p95 & Delta p95 & Gamma p95 & PDE residual RMS\\
\midrule
2026 &0.00584792&1.90532&0.126435&0.0330828&0.0108313\\
3407 &0.00700464&2.25867&0.135171&0.0357557&0.0109860\\
5201 &0.00808553&2.42720&0.164153&0.0375136&0.0149560\\
7713 &0.00893305&2.53432&0.156070&0.0385162&0.0147449\\
9109 &0.00577322&1.88157&0.132980&0.0321622&0.0105338\\
\bottomrule
\end{tabular}
\end{table}

\FloatBarrier
\subsection{Complete parameter-support OOD population and secondary selected subset}
\label{app:ood}
\begin{table}[htbp]
\caption{Proposed and realized normalized parameter-support OOD distances beyond the nearest training boundary, divided by the corresponding training width. The joint row is multi-axis.}
\label{tab:realizedsupport}
\centering\small
\begin{tabular}{lrrr}
\toprule
Regime & Proposed near/mid/far & Realized min/median/max & Count\\
\midrule
Low $K$ &.037/.113/.188&.042/.109/.179&32\\
High $K$ &.037/.175/.312&.043/.174/.308&32\\
High $r$ &.077/.231/.385&.079/.247/.383&32\\
High $\sigma_0$ &.086/.257/.429&.098/.247/.428&32\\
Negative $\beta$ &.033/.100/.167&.035/.051/.065&32\\
Positive $\beta$ &.033/.083/.133&.036/.079/.129&32\\
Negative $\gamma$ &.053/.132/.211&.054/.126/.193&32\\
Positive $\gamma$ &.053/.132/.211&.061/.112/.210&32\\
Joint &---&Multi-axis&64\\
\bottomrule
\end{tabular}
\end{table}
The primary parameter-support OOD evaluation compares refitted E0 and FI on the complete canonical-clipped 320-surface population. Across five seeds, mean pooled $\RelL$ $\pm$ sample SD is $0.1974974\pm0.0269031$ for E0 and $0.00151740\pm0.0000751062$ for FI. Table~\ref{tab:classical}C reports these means at reduced display
precision; the sample standard deviations are retained here.

The archived OOD reference uses $S_{\max}=825$, $N_S=1200$, $N_t=320$, and four Rannacher half steps, rather than the principal ID solver domain. Each seed pools all $320\times41\times81$ price points, including terminal and boundary points. FI uses the same selected weights as in the principal study but retains the historical OOD output adapter; the corrected terminal-preserving ID adapter is a different evaluation convention. These results therefore support a matched E0--FI comparison on the OOD population, not a matched OOD/ID degradation ratio. This complete-population comparison does not isolate the OOD contributions of normalization, FAL, or RA; intermediate-architecture OOD values below come from the secondary selected archive and are not substituted for a complete-320 ablation. The full initial OOD implementation used the same \emph{unclipped} coefficient in sensors and reference labels, making inputs/labels internally consistent but different from canonical training/ID clipping. Table~\ref{tab:clipping} restores canonical clipping on identical parameter rows with fixed models. Clipping affects all 32 negative-skew, 15 joint, and one positive-skew surface. FI's negative-skew result changes about $11.369\%$, more than the $1.4553\%$ aggregate change. Whole-population and affected-surface statistics are reported separately.

\begin{table}[htbp]
\caption{Historical complete 320-row clipping sensitivity, fixed pre-refit trained models and identical parameter rows.}
\label{tab:clipping}
\centering\small
\begin{tabular}{lrrr}
\toprule
Model & Unclipped & Canonical clipped & Relative change\\
\midrule
DeepONet (historical E0) & 0.197656594 & 0.197654514 & $-0.0011\%$\\
FI-DeepONet &0.0014956348&0.0015174005&$+1.4553\%$\\
\bottomrule
\end{tabular}
\end{table}

The following 82-surface L3 archive is secondary. Selection depended partly on model errors; its DeepONet aggregate $0.2636$ must not be used as the complete-320 error. It does not establish a representative parameter-support OOD comparison for all intermediate architectures.
Table~\ref{tab:ood82} preserves the regime-level values from this selected archive for traceability.

\begin{table}[htbp]
\caption{Secondary OOD archive: price $\RelL$ on 82 selected higher-resolution surfaces. Columns with FAL are normalized; this is not the complete-population primary evidence.}
\label{tab:ood82}
\centering\small
\begin{tabular}{lrrrrr}
\toprule
Regime & DeepONet & FAL & FAL + PP & FI & FI + PP\\
\midrule
Aggregate &0.2636&0.0042&0.0041&0.0015&0.0014\\
High strike &0.2006&0.0081&0.0077&0.0032&0.0038\\
High rate &0.0074&0.0035&0.0035&0.000748&0.000883\\
High volatility &0.0138&0.0091&0.0089&0.0017&0.0017\\
Joint shift &0.1934&0.0108&0.0106&0.0038&0.0039\\
Low strike &0.3822&0.000590&0.000564&0.000833&0.000381\\
Negative skew &0.0062&0.0025&0.0024&0.0010&0.0011\\
Negative term &0.0050&0.0022&0.0020&0.000789&0.000939\\
Positive skew &0.0060&0.0028&0.0027&0.0014&0.0015\\
Positive term &0.0054&0.0017&0.0017&0.000645&0.000773\\
\bottomrule
\end{tabular}
\end{table}
The proposed shifts are moderate; realized negative-skew excursions are especially smaller than their proposal extreme. Substituting $S=Km$ into the family removes explicit $K$ dependence from its volatility formula, so strike shifts partly test scale handling. Skew/term shifts change spatial/time shape. These shifts provide the primary OOD evidence within the stated coefficient family; off-family directional responses are examined separately in Appendix~\ref{app:functional}. Principal ID, complete OOD, and selected-82 values have distinct model/population/aggregation identities; no cross-study OOD/ID ratio is used.

\FloatBarrier
\subsection{Beyond-family functional-response stress tests and development studies}
\label{app:functional}
At fixed $(K,r,T)$, the generating family is a low-dimensional manifold $\mathcal M=\{\sigma_{\sigma_0,\beta,\gamma}\}$. A direction normal to its sampled tangent span is not produced by an infinitesimal change of its generating parameters. A carrier-null perturbation also satisfies $h(K,t)=0$, so its first-order analytic-reference response is zero.

To distinguish volatility from variance inputs, set $\mathcal G_\sigma[\sigma]=\mathcal G[\sigma^2]$. For a smooth admissible perturbation $\sigma+\varepsilon h$, $Z=D\mathcal G_\sigma[\sigma]h$ satisfies
\begin{equation}
\partial_\tau Z=\mathcal A_{\sigma^2,\tau}Z+\sigma hS^2v_{SS},\qquad Z(S,0)=0.
\label{eq:tangent}
\end{equation}
Both $\sigma,h$ are evaluated at $(S,T-\tau)$. The factor $\sigma h$ is the derivative of $\tfrac12\sigma^2$; this is not a variance perturbation. $Z_S,Z_{SS}$ are mixed input--spot sensitivities, distinct from ordinary Delta/Gamma.

For a numerical step $M_nv_{n+1}=N_nv_n+b_n$, differentiation gives
\begin{equation}
M_nZ_{n+1}=N_nZ_n+\dot N_nv_n+\dot b_n-\dot M_nv_{n+1}.
\label{eq:discretetangent}
\end{equation}
Initial/boundary values, restriction, and Greek stencils are differentiated consistently. Model AD versus its own FD checks implementation; comparison with this reference tangent checks the \emph{learned response}. Derivative-informed operator learning concerns the latter problem \citep{r38,r39,r40,r65,r66}.

The completed final-FI study probes a stricter off-family regime than the parameter-support OOD test. It uses seed 7713, selected update 6600, and 137 identifiable perturbation pairs from 23 bases. Carrier-null median directional $\RelL$ is $1.10659$ for price, $1.01399$ for Delta response, and $1.02228$ for Gamma response, with slightly negative near-zero cosines. The model responds measurably to the perturbations but does not accurately reproduce the off-family reference tangent. This is a single selected-model functional stress test rather than a five-seed comparison.

An earlier normalized FAL-only input ablation compares five information variants over three seeds. Removing all volatility information increases price error by factors $2.27$ before FAL and $8.32$ after FAL. This establishes within-family value of volatility information; off-family directional response is evaluated in the dedicated stress tests. That earlier model's one-shot off-family response has $\RelL=9.47$ and cosine $0.066$. Five subsequent input/fusion architectures on the original family retain small price errors but do not repair the tested function-direction response. Removing generating parameters or delaying fusion is not sufficient in those tests; it is not a reproduction of every late-fusion design \citep{r67}.

\paragraph{Controlled multi-family development study.}
A separate set includes benchmark-family, B-spline, Fourier, Mat\'ern/Karhunen--Lo\`eve, and localized-bump fields, with 512 training, 40 validation, and 40 Wavelet development surfaces. Price/directional labels use \eqref{eq:discretetangent}. An initial derivative-informed study was confounded by a floor on a loss coefficient, output precision, terminal handling, and output-stage ambiguity; those results do not prove intrinsic target incompatibility.

The controlled follow-up uses one fixed-sensor late-fusion architecture and three configurations: value supervision plus preservation; that baseline plus calibrated JVP/mixed-derivative supervision; and a control retaining the coefficient floor. All retain the preservation term, corrected output implementation, three seeds, and 7,000 updates. A 2,000-resample bootstrap clusters the 40 base surfaces, retaining both directions and all three fixed trained seeds in each sampled cluster. Its intervals condition on these models and reused development surfaces, not new seeds or prospective unseen families.
Table~\ref{tab:jvp} reports the resulting conditional directional and value-error ratios.

\begin{table}[htbp]
\caption{Multi-family development paired error ratios, geometric means. Conditional 95\% intervals cluster surfaces, retaining three fixed seeds. Lower than one is improvement.}
\label{tab:jvp}
\centering\small
\begin{tabular}{lrl}
\toprule
Quantity & Error ratio & Conditional 95\% interval\\
\midrule
Composite directional error &0.584911&[0.554260, 0.618680]\\
Price error &2.413665&[2.344490, 2.484836]\\
Composite value error &1.430692&[1.416221, 1.444485]\\
\bottomrule
\end{tabular}
\end{table}
The three within-seed directional ratios are approximately $0.5834$, $0.5017$, and $0.6837$. Nevertheless, benchmark-family, Wavelet, and worst-family price-preservation criteria are not met. Removing the coefficient floor improves pooled results versus that control, with heterogeneous seed/direction effects. No tested configuration meets both price-preservation and directional-response criteria. This development study targets a distinct beyond-family response objective from the held-out ID and parameter-support evaluations. A prospective new-family test has not been run and would address a distinct extrapolation setting.

A separate variable-sensor study has worst 96-versus-231-sensor price-error ratio $2.2235$. Its population is not pooled with the 40-surface JVP experiment. The fixed-sensor follow-up does not retest sensor transfer. Resolution acceptance and numerical accuracy after a discretization change are different properties \citep{r68,r69}. Principal pricing, spot-Greek, and quantum-compatible results continue to refer to the benchmark-family FI model, not these development models.

\FloatBarrier
\subsection{Historical terminal sensitivity, PP, and recorded timing}
\label{app:historical}
\begin{table}[htbp]
\caption{Historical including-terminal Greek sensitivity and PP price/PDE diagnostics, mean $\pm$ sample SD over five seeds. Price uses 32 ID surfaces; Greeks use the 24-surface L3 subset with 3,239 pairs including 79 terminal pairs. These are not Table~\ref{tab:principal}'s 3,160-pair nonterminal Greeks. A dash indicates no independently differentiated continuous PP output.}
\label{tab:historical}
\centering\small
\begin{tabular}{lrrr}
\toprule
Metric & PI & FI & FI--PP\\
\midrule
Global $\RelL$ &$0.007244\pm0.001228$&$0.0009645\pm0.0000488$&$0.001043\pm0.0000455$\\
Global price p95 &$0.9294\pm0.1979$&$0.1331\pm0.0046$&$0.1330\pm0.0037$\\
Near price p95 &$2.3338\pm0.3268$&$0.1579\pm0.0073$&$0.1579\pm0.0073$\\
Delta p95 &$0.1726\pm0.0190$&$0.02535\pm0.00188$&---\\
Gamma p95 &$0.03760\pm0.00075$&$0.006469\pm0.000038$&---\\
PDE residual RMS &$0.01049\pm0.00151$&$0.007025\pm0.000224$&$0.006931\pm0.000220$\\
\bottomrule
\end{tabular}
\end{table}
These historical and primary masks view the same frozen predictions, not different trained models. PP slightly worsens global price error. Identical displayed near-strike p95 before/after PP does not imply pointwise equality. Although the PDE residual RMS omits the terminal evaluation row, its last centered interior time stencil uses terminal prices, so the reported near-maturity residual is not independent of the terminal-value convention. Under the including-terminal mask, carrier-only Delta/Gamma p95 is $0.0945468/0.00874230$, versus FI $0.0253491/0.00646887$, reductions of approximately $73.19\%/26.00\%$. This comparison includes the learned correction and output map jointly; it is not mixed into the primary nonterminal attribution.

Table~\ref{tab:timing} records the historical timers; their unequal scopes are explained below.

\begin{table}[htbp]
\caption{Recorded CPU timing, not an equal-scope efficiency comparison. Training includes validation but PI/FI timer boundaries differ.}
\label{tab:timing}
\centering\small
\begin{tabular}{lrrr}
\toprule
Quantity & PI & FI & FI--PP\\
\midrule
Training time (s) &$4779.8\pm196.8$&$437.6\pm42.9$&Same checkpoint\\
Inference time (s) &$0.1490\pm0.0031$&$0.1266\pm0.0021$&$0.2055\pm0.0055$\\
\bottomrule
\end{tabular}
\end{table}
PI excludes initial dataset/model preparation and later held-out inference, but includes collocation, differentiation, checkpointing, and periodic log rewrites. FI also includes initial data/reference preparation and final selected-checkpoint held-out inference/prediction writing. PI has 7,000 updates and 72 validation calls; differentiation time is not separately recorded. Historical mean training totals for DeepONet, Normalized DeepONet, normalized soft penalties, normalized FAL, and FI are $41.78$, $475.89$, $536.49$, $499.24$, and $437.61$ seconds, respectively, with 258,305 parameters each. Neither equal timer scope nor architecture-only efficiency follows from these values. No amortization crossover is inferred without label generation and training costs.
\FloatBarrier
\subsection{Upper-boundary target sensitivity}
\label{app:piboundary}
The canonical PI comparator uses an asymptotic call-price target at the query edge $S=220$, whereas the ID reference solver extends to $S=605$. We test this difference by replacing only the upper-boundary target with the interpolated reference-solver trace at $S=220$. The architecture, optimizer schedule, collocation protocol, loss weights, validation checkpoint rule, and five seeds remain fixed. The sensitivity was predeclared before training; all five runs completed the 7,000-update protocol. Per-seed configurations, training logs, and selected-checkpoint hashes are retained.

Table~\ref{tab:pi_boundary} shows small changes in PI errors and a persistent gap to FI. The tested boundary-target difference therefore does not account for the principal FI--PI separation.

\begin{table}[htbp]
\caption{Upper-boundary target sensitivity: five-seed means under the principal evaluation protocol. Trace-matched PI changes only the upper-boundary target. Lower is better; bold marks the lowest mean in each row.}
\label{tab:pi_boundary}
\centering\small
\begin{tabular}{lrrr}
\toprule
Metric & Canonical PI & Trace-matched PI & FI\\
\midrule
Price $\RelL$ &0.00724&0.00723&$\mathbf{0.000965}$\\
Near price p95 &2.3338&2.3051&$\mathbf{0.1579}$\\
Delta p95 &0.1547&0.1528&$\mathbf{0.0245}$\\
Gamma p95 &0.0355&0.0358&$\mathbf{0.00434}$\\
PDE residual RMS &0.01049&0.01045&$\mathbf{0.00703}$\\
\bottomrule
\end{tabular}
\end{table}
\FloatBarrier
\section{Quantum-compatible realizations and resource accounting}
\label{app:quantum}
This appendix studies two quantum-compatible realizations of a pretrained dense FI model. QO exactly compiles selected frozen linear maps through signed-core orthogonal factorizations, whereas QDO replaces eligible dense maps by restricted diagonal--orthogonal ($DQ$) parameterizations followed by supervised adaptation. These constructions differ from Quantum DeepONet, which trains an orthogonally parameterized classical analogue and transfers the learned angles for quantum-circuit evaluation \citep{r15}, and from recent quantum neural operators based on alternative structured parameterizations or spectral embeddings \citep{r16,r71}. All reported results here are classical or idealized algebraic evaluations: no physical routed depth, finite-shot quantum execution, or measured end-to-end hardware speedup is claimed.

\FloatBarrier
\subsection{Exact frozen-model compilation}
\label{app:qo}
For $\mathbf W\in\R^{m\times n}$, take a full SVD $\mathbf W=U\Sigma V^\top$. Select diagonal sign matrices $D_U,D_V$ with $\det D_U=\det U$, $\det D_V=\det V$. Then
\begin{equation}
U_+=UD_U\in SO(m),\quad V_+=VD_V\in SO(n),\quad
\Sigma_\pm=D_U\Sigma D_V,\qquad \mathbf W=U_+\Sigma_\pm V_+^\top.
\label{eq:qo}
\end{equation}
Because $D_U^2=D_V^2=I$, multiplication gives $(UD_U)(D_U\Sigma D_V)(VD_V)^\top=U\Sigma V^\top$. Determinants place the factors in the special-orthogonal groups; plane-rotation elimination gives a finite Givens decomposition. Reflection signs must remain in the signed core. Unary amplitude encoding and Givens/RBS circuits implement these orthogonal actions in the unary subspace \citep{r15,r53,r54}. A single replacement $\mathbf W\mapsto Q$ would not preserve a general matrix.

QO's eligible set consists of all 15 dense maps. The realized compiler selects \texttt{trunk.2}, a $128\times128$ map evaluated at 3,321 queries, and \texttt{fusion.0}, a $128\times384$ map evaluated once:
\begin{equation}
128^2\cdot3321+128\cdot384=54,460,416
\end{equation}
semantic MACs, or 49.5563\% of the whole dense-map census $109,896,064$. In this all-dense-map census, eligible and whole-map coverage coincide. Reconstructed blocks and ideal orthogonal actions pass the recorded checks, with compilation-induced relative price discrepancy $9.06\times10^{-17}$. This measures an idealized selected computation, not finite-shot accuracy. The signed singular core, biases, nonlinearities, carrier, FAL, and other retained operations remain classical; hardware additionally needs preparation, readout, and precision control.

\FloatBarrier
\subsection{Structured representation, limitations, and initialization}
\label{app:qdo}
For a square block, classical Procrustes gives \citep{r60}
\begin{equation}
\min_{Q^\top Q=I}\norm{\mathbf W-Q}_F^2=\sum_i(\varsigma_i-1)^2,\qquad Q_*=UV^\top.
\end{equation}
Here $\varsigma_i$ are the singular values of $\mathbf W$. The restricted diagonal--orthogonal family used by QDO is
\begin{equation}
\mathbf W\approx DQ,\qquad D=\diag(d_1,\ldots,d_n),\quad Q=Q_{\rm circ}\in\mathcal Q_{\rm circ}\subseteq O(n).
\end{equation}
Positive scales are parameterized by $d_i=e^{s_i}$; determinant/reflection signs require separate treatment. For a fixed $Q$, rowwise least squares gives $d_i=\langle w_i,q_i\rangle$ without sign constraints, or $\max(\langle w_i,q_i\rangle,0)$ for nonnegative scales. A zero optimum is approached but not attained by a strictly positive exponential parameterization.

\begin{proposition}[Representability and approximation obstruction]
\label{prop:dq}
For square $\mathbf W$, unrestricted $Q\in O(n)$ and nonnegative diagonal $D$ represent $\mathbf W=DQ$ exactly iff its nonzero rows are mutually orthogonal, equivalently $\mathbf W\mathbf W^\top$ is diagonal. Put $\gamma_{\rm row}(\mathbf W)=\norm{\offdiag(\mathbf W\mathbf W^\top)}_F$ and $\mu_W=\norm{\mathbf W}_{\rm op}$. Every such $A=DQ$ satisfies
\begin{equation}
\norm{\mathbf W-A}_F\ge\sqrt{\mu_W^2+\gamma_{\rm row}(\mathbf W)}-\mu_W.
\label{eq:dqlower}
\end{equation}
Restricting $Q$ to $\mathcal Q_{\rm circ}$ cannot reduce the optimum error.
\end{proposition}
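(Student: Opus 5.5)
The plan is to prove the three claims of Proposition~\ref{prop:dq} separately. The key observation is that any $A=DQ$ with $D$ diagonal and $Q\in O(n)$ has a diagonal Gram matrix:
\[
AA^\top=DQQ^\top D=D^2 .
\]
So membership in the $DQ$ family forces the off-diagonal part of $AA^\top$ to vanish. Both the exact characterization and the lower bound \eqref{eq:dqlower} will be read off from this, together with a perturbation estimate for the Gram map $\mathbf W\mapsto\mathbf W\mathbf W^\top$.

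\emph{Exact representability.} If $\mathbf W=DQ$, the identity above shows that $\mathbf W\mathbf W^\top=D^2$ is diagonal, so the rows of $\mathbf W$ are mutually orthogonal. Conversely, suppose the nonzero rows $w_i$ are mutually orthogonal.
\begin{itemize}
\item For each nonzero row, set $d_i=\norm{w_i}$ and $q_i=w_i/d_i$; these $q_i$ are orthonormal.
\item Complete the family $\{q_i\}$ to an orthonormal basis of $\R^n$, assigning the added vectors to the indices of the zero rows, and set $d_i=0$ there.
\item Stack the $q_i$ as the rows of $Q\in O(n)$.
\end{itemize}
Then $DQ$ has $i$-th row $d_iq_i=w_i$ for every $i$, so $\mathbf W=DQ$ with $D$ nonnegative.

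\emph{Lower bound.} Fix any $A=DQ$ and put $E=\mathbf W-A$ and $e=\norm E_F$. Expanding $AA^\top=(\mathbf W-E)(\mathbf W-E)^\top$ gives
\[
\mathbf W\mathbf W^\top-AA^\top=\mathbf WE^\top+E\mathbf W^\top-EE^\top .
\]
Because $AA^\top$ is diagonal, the off-diagonal part of the left side equals $\offdiag(\mathbf W\mathbf W^\top)$. The map $\offdiag$ is a coordinate projection, hence nonexpansive in the Frobenius norm. Bounding the right side with $\norm{XY}_F\le\norm X_{\rm op}\norm Y_F$ and $\norm E_{\rm op}\le\norm E_F$ then gives
\[
\gamma_{\rm row}(\mathbf W)\le 2\mu_W e+e^2 ,
\]
which is the same as $(e+\mu_W)^2\ge\mu_W^2+\gamma_{\rm row}(\mathbf W)$. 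Taking square roots (all quantities are nonnegative) yields \eqref{eq:dqlower}. The bound holds for every $A$ in the family, hence also for the infimum.

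\emph{Restriction to circuits.} Since $\mathcal Q_{\rm circ}\subseteq O(n)$, the restricted family $\{DQ_{\rm circ}\}$ is a subset of the unrestricted one. An infimum of $\norm{\mathbf W-A}_F$ over a subset cannot be smaller than the infimum over the superset, so restricting $Q$ cannot reduce the optimal error.

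No step is a serious obstacle. The only point needing care is the perturbation bound in the lower-bound step: the quadratic term $EE^\top$ must be kept, which is exactly what produces the square-root form of \eqref{eq:dqlower} instead of a purely linear bound $\gamma_{\rm row}/(2\mu_W)$.
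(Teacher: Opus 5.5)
Your proposal is correct and follows the paper's proof essentially step for step: the diagonal Gram identity $AA^\top=D^2$ with orthonormal completion at the zero rows for the characterization, the Gram-perturbation estimate $\gamma_{\rm row}(\mathbf W)\le\varepsilon(2\mu_W+\varepsilon)$ solved as a quadratic inequality for \eqref{eq:dqlower}, and the inclusion $\mathcal Q_{\rm circ}\subseteq O(n)$ for the restriction claim. The only cosmetic difference is in the expansion: the paper writes $\mathbf W\mathbf W^\top-AA^\top=E\mathbf W^\top+AE^\top$ and uses $\norm{A}_{\rm op}\le\mu_W+\varepsilon$, while you expand into three terms including $-EE^\top$; both give the same bound.
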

\begin{proof}
$\mathbf W=DQ$ implies $\mathbf W\mathbf W^\top=D^2$. Conversely, normalize the nonzero orthogonal rows and complete an orthonormal row set at zero rows; row norms form $D$. For the bound, put $E=\mathbf W-A$. Since $AA^\top=D^2$,
\begin{align}
\gamma_{\rm row}(\mathbf W)&\le\norm{\mathbf W\mathbf W^\top-AA^\top}_F
=\norm{E\mathbf W^\top+AE^\top}_F\nonumber\\
&\le\norm E_F(\norm{\mathbf W}_{\rm op}+\norm A_{\rm op})
\le\varepsilon(2\mu_W+\varepsilon),\qquad \varepsilon=\norm E_F.
\end{align}
Solving the quadratic inequality proves \eqref{eq:dqlower}. A smaller circuit set cannot improve its minimum.
\end{proof}
Row orthogonality is not sufficient for a restricted-depth family. An $SO(n)$-only, positive-scale model needs the appropriate determinant sign unless an external sign operation is retained. Row geometry, circuit restrictions, and fitting error can all contribute; high-coverage error is not automatically an optimization failure.

Initial substitutions fit dense activations $X$ and outputs $Y$ using the nonnegative-clamped normalized quadratic objective
\begin{equation}
\frac{\operatorname{tr}(YY^\top)-2\langle M,YX^\top\rangle+
\operatorname{tr}(MXX^\top M^\top)}{\operatorname{tr}(YY^\top)},
\qquad M=\diag(e^s)Q_{\rm BB}.
\label{eq:activationfit}
\end{equation}
Here $Q_{\rm BB}=Q_{\rm BB}(\theta)\in\mathcal Q_{\rm circ}$ denotes the implementation-specific parameterized orthogonal factor used in the activation fit. The retained records identify the logical stage count and aggregate parameterization but do not recover the within-stage pairing/permutation schedule, so no more specific circuit topology is inferred from the angle count. Angles and log scales start at zero. LBFGS uses learning rate 0.8, maximum 120 iterations/240 evaluations, history size 30, strong-Wolfe line search, gradient tolerance $10^{-10}$, and change tolerance $10^{-13}$. The selected \texttt{trunk.2} initialization is reused; \texttt{trunk.4} is fitted independently of coverage. This initialization is followed by supervised adaptation; the matched attribution study below separates structured-map-only from full-network updates.

\FloatBarrier
\subsection{Supervised adaptation, eligible coverage, and implementation cost}
\label{app:qadapt}
The parent is FI-DeepONet (archive alias E5), seed 7713, update 6600. QPF is the historical protocol identifier for QDO supervised adaptation, not an additional model family. The historical QPF protocol adapts all registered trainable parameters after structured initialization; its selected C94 instance for seed 4262703989 is update 1900. For each full 128-dimensional map, the archived structured parameterization contains eight logical Givens stages, totaling 3,584 rotation angles, together with 128 diagonal scale parameters and bias. The retained manuscript package does not recover the within-stage pairing/permutation schedule or any stage-to-stage reordering; accordingly, ``stage'' denotes a logical parameterization stage rather than routed circuit depth. QPF fits reconstructed pre-FAL normalized-price MSE on 410 original-training surfaces, holding out a disjoint training-only 102-surface subset for selection. Batches are uniform 32 surfaces by 512 queries. Learning rate stays $10^{-3}$ through update 800, then follows cosine decay to $10^{-5}$ at update 2500. The matched attribution experiment in Appendix~\ref{app:qmatched} reuses the same data/update/schedule semantics but freezes unreplaced parameters in its local-only arm. C100 is a separate 3,500-update endpoint diagnostic, not part of the four-candidate knee selection.

QDO admits only ten $128\times128$ hidden maps: $C_{\rm elig}=108,953,600$ versus $C_{\rm whole}=109,896,064$ dense-map MACs. C94 substitutes \texttt{trunk.2} and a selected subset of \texttt{trunk.4}, recorded in the archived summary as fourteen output blocks, with
\begin{align}
C_{\rm repl}^{94}&=102,021,120,&\rho_{94}&=0.9363721804511278,&\omega_{94}&=0.9283418922082596,\nonumber\\
C_{\rm repl}^{99}&=108,822,528,&\rho_{99}&=0.998796992481203,&\omega_{99}&=0.9902313516888103,\label{eq:coverages}\\
C_{\rm repl}^{100}&=108,953,600,&\rho_{100}&=1,&\omega_{100}&=0.9914240422659724.\nonumber
\end{align}
Here $f_{\rm elig}=C_{\rm elig}/C_{\rm whole}$, $\rho=C_{\rm repl}/C_{\rm elig}$, and $\omega=f_{\rm elig}\rho$. The symbol $\alpha$ remains reserved for the maturity-rescaling exponent. Every count uses the same semantic MAC convention. They are not percentages of all end-to-end computation.

The saved partial implementation evaluates both the full classical dense map and the structured map, then overwrites selected outputs. Replaced-MAC coverage therefore does not measure current runtime reduction. The denominator excludes reference-price evaluation, nonlinearities, latent contraction, FAL, PP, data movement, state preparation, readout, and other quantum overhead. Materialized $DQ$ remains an ordinary dense matrix when evaluated classically.

Figure~\ref{fig:app_quantum} separates exact matrix factorization,
the restricted QDO family, and the denominators used for semantic MAC
coverage. No arrow or area in this schematic represents measured latency.
\begin{figure}[!htbp]
\centering
\begingroup\fontfamily{ptm}\selectfont\mathversion{figuretimes}

\begin{tikzpicture}[x=1cm,y=1cm]
\node[anchor=west,font=\footnotesize\bfseries] at (0,0) {(a) What each realization preserves};
\node[appbox,text width=5.7cm,minimum height=1.75cm] at (3.15,-1.25)
{\textbf{QO: exact factorization}\\[4pt]
$\mathbf W=U_+\Sigma_\pm V_+^\top$\\
$U_+,V_+$: rotations; $\Sigma_\pm$: classical signed core\\
No supervised adaptation};
\node[appbox,text width=5.7cm,minimum height=1.75cm,draw=appRed] at (10,-1.25)
{\textbf{QDO: restricted approximation}\\[4pt]
$\mathbf W\approx DQ_{\rm circ}$\\
Unrestricted exact $DQ$: diagonal $\mathbf W\mathbf W^\top$\\
Restricted family + supervised adaptation};
\node[anchor=west,font=\footnotesize\bfseries] at (0,-2.75) {(b) C94 semantic MAC census};
\fill[appRed!85] (0,-3.5) rectangle (12.06844,-3.1);
\fill[appGray!55] (12.06844,-3.5) rectangle (12.8885,-3.1);
\fill[appNavy] (12.8885,-3.5) rectangle (13,-3.1);
\node[font=\scriptsize,anchor=west] at (0,-3.9) {Replaced: $102{,}021{,}120$};
\node[font=\scriptsize,anchor=east] at (13,-3.9) {Remaining: $7{,}874{,}944$};
\node[appbox,text width=5.7cm,minimum height=1cm] at (3.15,-4.9)
{Eligible denominator\\$108{,}953{,}600$\quad$\rho=93.6372\%$};
\node[appbox,text width=5.7cm,minimum height=1cm] at (10,-4.9)
{Whole dense-map denominator\\$109{,}896{,}064$\quad$\omega=92.8342\%$};
\node[appnote,text width=12.7cm] at (6.5,-5.85)
{Red: replaced eligible work. Gray: retained eligible work. Navy: ineligible dense maps.\\The saved partial implementation evaluates both dense and structured outputs; this bar is not runtime.};
\end{tikzpicture}

\endgroup
\caption{Quantum-compatible realization scope. (a) QO preserves the selected
matrix through two orthogonal factors and a classical signed core; QDO
restricts the matrix family and then adapts it. The diagonal-Gram condition
is necessary and sufficient only for unrestricted $Q\in O(n)$ with
nonnegative scales, and is merely necessary for the restricted circuit family.
(b) C94 replaced-MAC accounting: the small navy portion is ineligible
dense-map work, so eligible coverage $\rho$ exceeds whole-map coverage $\omega$.
Counts refer to the same semantic dense-map workload, excluding preparation,
measurement, carrier evaluation, nonlinearities, and other overhead.
Neither the proportional bar nor QO exactness demonstrates hardware speedup.}
\label{fig:app_quantum}
\end{figure}

\FloatBarrier
\subsection{Matched adaptation attribution}
\label{app:qmatched}
To separate recovery by the structured maps from compensation elsewhere in the network, we rerun C94 with the same parent, replacement support, 410/102 training-only fitting/selection split, minibatch schedule, optimizer schedule, update budget, and checkpoint rule. The only intervention is the trainable parameter set. The frozen arm applies the fitted DQ substitution without supervised adaptation; the local arm updates only the substituted structured parameters (and their replaced biases where applicable); the full arm follows the historical QPF scope and updates all registered trainable parameters. Three paired QPF seeds are used for the trainable arms.

Table~\ref{tab:qadaptattr} compares fidelity recovery and unreplaced-parameter drift under this matched intervention.

\begin{table}[htbp]
\caption{C94 matched adaptation attribution on the locked 32-surface synthetic evaluation. Local and full rows are three-seed mean $\pm$ sample SD. The recovery fraction is computed on global price $\RelL$ from the frozen substitution toward the full-adaptation result.}
\label{tab:qadaptattr}
\centering\small
\begin{tabular}{lrr}
\toprule
Arm & Global price $\RelL$ & Unreplaced-parameter drift\\
\midrule
Dense FI parent & 0.000836816 & ---\\
DQ frozen & 0.00808648 & 0\\
DQ local & $0.00180191\pm0.0000673$ & 0\\
DQ full & $0.00178871\pm0.0000160$ & $0.08698\pm0.00200$\\
\bottomrule
\end{tabular}
\end{table}

Local-only adaptation accounts for $99.79\%$ of the frozen-to-full global-$\RelL$ recovery. Full adaptation has a slightly lower mean error, and its unreplaced parameters move measurably (relative drift $0.08698\pm0.00200$; replaced-parameter drift $0.02010\pm0.00091$), but the extra full-over-local error reduction is only about $0.73\%$ on the global-$\RelL$ scale. Parameter drift alone is not causal evidence; these results therefore support recoverability within the structured maps but do not support full-network compensation as necessary for the bulk of fidelity recovery. A historical block-sparse proxy lacks recoverable matched adaptation semantics, so no equal-adaptation DQ-versus-block-sparse claim is made.

\FloatBarrier
\subsection{Training-only score and knee selection}
\label{app:qselection}
With $\varepsilon=10^{-12}$, the six-component selection score is
\begin{align}
J={}&0.35\log(\varepsilon+L_g)+0.20\log(\varepsilon+L_n)
+0.15\log(\varepsilon+L_\Delta)+0.15\log(\varepsilon+L_\Gamma)\nonumber\\
&+0.10\log(\varepsilon+L_w)+0.05\log(\varepsilon+L_p),
\qquad R_{\rm comp}=e^{J-J_{\rm E5}}.
\label{eq:qscore}
\end{align}
$L_g$ is pooled price $\RelL$, $L_n$ near-strike price p95, and $L_\Delta,L_\Gamma$ global finite-point AD absolute p95. For post-FAL, pre-PP price $P$, its repaired grid $P_{\rm PP}$, and $C=\Cref$,
$L_p=(\norm{P_{\rm PP}-P}_2/(\norm{P-C}_2+\varepsilon))^2$.
$L_w$ is the maximum of six surface-averaged normalized weak-condition components described below. The earliest minimum $(J,\mathrm{step})$ among evaluations every 100 updates selects the checkpoint. Primary $R_{\rm comp}=1.305077932190532$ is a training-holdout composite, not a price-error ratio, a formal-validation statistic, or a hardware metric.

Four candidates C49, C87, C94, and C99 are represented by median selected $R_{\rm comp}$ over three paired seeds. Nondominance minimizes this cost while maximizing eligible coverage and $S_i=G_{\rm ref}(\eta=c=0.01)$. With $\ell_i=\log S_i$, define
\begin{equation}
d_i=\left[\left(\frac{R_i-R_{\min}}{R_{\max}-R_{\min}+10^{-30}}\right)^2+
\left(1-\frac{\ell_i-\ell_{\min}}{\ell_{\max}-\ell_{\min}+10^{-30}}\right)^2\right]^{1/2}.
\label{eq:knee}
\end{equation}
Ranges use all four candidates. Select the nondominated point minimizing $d_i$, breaking ties toward smaller $R_i$: C94 is selected. Accuracy/financial flags are evaluated afterward, not used as a general feasibility filter. C94 does not pass the frozen ID/OOD accuracy-preservation gates; it must not be described as a successful lossless migration. Independent replay of those gates requires their numerical tolerance/configuration records in addition to these summaries.

\paragraph{Weak-condition components are surrogate diagnostics.}
Projected prices are reversed in time and transferred by tensor-product linear interpolation from uniform auxiliary coordinates $x_{\rm w}\in[-1,1]$, $\tau_{\rm w}\in[0,1]$ to grids G1/G2/G4 of sizes $(17,9)$, $(33,17)$, and $(65,33)$. With $b_{\rm w}=(1-x_{\rm w}^2)_+^2[\tau_{\rm w}(1-\tau_{\rm w})]_+^2$, the three test functions are
\begin{equation}
\begin{gathered}
b_{\rm w}(1+0.2x_{\rm w}+0.1\tau_{\rm w}),\qquad b_{\rm w}(1-0.15x_{\rm w}+0.3\tau_{\rm w}+0.1x_{\rm w}\tau_{\rm w}),\\
b_{\rm w}(1+0.25x_{\rm w}^2-0.2\tau_{\rm w}+0.1\tau_{\rm w}^2).
\end{gathered}
\end{equation}
Each grid component averages squared weak residuals integrating
$\phi\,\partial_{\tau_{\rm w}}v+0.02(\partial_{x_{\rm w}}\phi)(\partial_{x_{\rm w}}v)+0.03\phi v$, with zero source/drift. On every grid, $\partial_{\tau_{\rm w}}v$, $\partial_{x_{\rm w}}v$, and $\partial_{x_{\rm w}}\phi$ use custom five-point fourth-order centered differences where two neighbors on each side are available. At the remaining points, \texttt{np.gradient(edge\_order=2)} supplies second-order centered differences adjacent to the endpoints and second-order one-sided endpoint differences. Nested Simpson quadrature integrates first in $x_{\rm w}$, then in $\tau_{\rm w}$. Three additional components integrate squared coarse/fine differences by Simpson quadrature on the finer grid after interpolating the coarser field to that grid, for G1--G2, G1--G4, and G2--G4. Surface means are normalized as $(\varepsilon+\text{candidate mean})/(\varepsilon+\text{E5 mean})$; their maximum is $L_w$. This is a frozen surrogate weak-condition diagnostic, \emph{not the
local-volatility PDE residual}.

\FloatBarrier
\subsection{Fidelity--coverage frontier}
\label{app:qfrontier}
The frozen candidate grid is evaluated on the same 32-surface formal population. C49/C87/C94/C99 use the original three paired QPF seeds; C100 uses three different historical seeds and is descriptive only. The mean sequence is nonmonotone, so increasing replacement coverage is not identified with either monotone fidelity loss or monotone improvement.

Table~\ref{tab:qfrontier} lists both coverage denominators alongside the paired synthetic means.

\begin{table}[htbp]
\caption{Synthetic fidelity across the frozen coverage grid. C49--C99 are three-seed paired-QPF means $\pm$ sample SD. C100 uses a non-paired historical seed registry and is descriptive only. Coverage is analytical dense-map accounting, not measured runtime saving.}
\label{tab:qfrontier}
\centering\small
\begin{tabular}{lrrrr}
\toprule
Candidate & $\rho$ & $\omega$ & Global price $\RelL$ & Near-price p95\\
\midrule
C49 & 0.4994 & 0.4951 & $0.0018105\pm0.0000123$ & $0.1137\pm0.0062$\\
C87 & 0.8739 & 0.8665 & $0.0017746\pm0.0000267$ & $0.1427\pm0.0201$\\
C94 & 0.9364 & 0.9283 & $0.0017887\pm0.0000160$ & $0.1318\pm0.0160$\\
C99 & 0.9988 & 0.9902 & $0.0017780\pm0.0000278$ & $0.1350\pm0.0204$\\
C100$^{\dagger}$ & 1.0000 & 0.9914 & $0.0017791\pm0.0000543$ & $0.1401\pm0.0122$\\
\bottomrule
\end{tabular}
\end{table}
\noindent$^{\dagger}$Different historical training seeds; excluded from paired-seed inference. C94 remains the knee selected by the frozen training-only rule in Appendix~\ref{app:qselection}, not a universal accuracy-optimal coverage.

\FloatBarrier
\subsection{Selected-C94 fidelity and unmatched proxy controls}
\label{app:qfidelity}

\begin{table}[htbp]
\caption{
Paired synthetic fidelity on a separate 32-surface population;
selected metrics are summarized in Table~\ref{tab:quantum}B.
The adapted C94 QDO instance is compared with its own dense parent,
not the principal five-seed mean.
Delta and Gamma use AD on the formal grid.
Ratios are QDO/dense; bold marks the lower error in each row.
}
\label{tab:qfidelity}
\centering
\small

\begin{tabular}{lrrr}
\toprule
Metric & Dense FI & QDO-FI & Ratio\\
\midrule
Global price $\RelL$
& $\mathbf{0.0008368}$ & 0.0017702 & 2.115\\
Price RMSE
& $\mathbf{0.05189}$ & 0.10978 & 2.116\\
Price MAE
& $\mathbf{0.03049}$ & 0.09600 & 3.149\\
Near-strike price p95 error
& $\mathbf{0.13147}$ & 0.13238 & 1.007\\
Global AD Delta p95 error
& $\mathbf{0.01267}$ & 0.01481 & 1.169\\
Global AD Gamma p95 error
& 0.002441 & $\mathbf{0.002237}$ & 0.916\\
\bottomrule
\end{tabular}
\end{table}
Table~\ref{tab:qfidelity} shows a fidelity trade-off: global price error roughly doubles, near-strike p95 changes little, and Gamma p95 decreases slightly. The selected C94 instance does not meet the fixed ID/OOD accuracy-preservation criteria. Formal synthetic Greeks use float64 AD on the formal grid; unlike training-only score evaluation, the formal evaluator does not discard nonfinite values, potentially giving nonfinite OOD summaries.

Table~\ref{tab:qcontrols} retains the historical proxy-budget controls; their adaptation protocols are not matched.

\begin{table}[htbp]
\caption{Structured-approximation price $\RelL$ on the matched 32-surface population. Coverage columns use the QDO eligible denominator. QO is an exact algebraic reference, not a new measured hardware realization at each column. Budgets match declared arithmetic/storage proxies, not QPU latency. Lower is better; bold marks the lowest error in each coverage column, including the tied dense and exact-compilation references.}
\label{tab:qcontrols}
\centering\small
\begin{tabular}{lrrr}
\toprule
Realization / control &93.6\%&99.9\%&100\%\\
\midrule
Dense FI
&$\mathbf{0.000836816}$
&$\mathbf{0.000836816}$
&$\mathbf{0.000836816}$\\
QO exact control
&$\mathbf{0.000836816}$
&$\mathbf{0.000836816}$
&$\mathbf{0.000836816}$\\
QDO matrix, classical evaluation &0.001770246&0.001747465&0.001716484\\
Block sparse, arithmetic matched &0.0224863&0.0269211&0.0398960\\
Truncated SVD, arithmetic matched &0.137667&0.141756&0.169582\\
Truncated SVD, storage matched &0.153499&0.177486&0.219893\\
Diagonal + low rank, matched &0.160663&0.180123&0.221246\\
\bottomrule
\end{tabular}
\end{table}
The controls use the selected blocks and nested coverage levels, with no market rows in fitting or selection. QDO has lower listed price errors than these structured classical controls at the declared proxy budgets. Because equal trainable parameter sets, adaptation data, update budgets, and selection rules are not recoverable for these historical controls, this comparison is descriptive and does not isolate the effect of the DQ parameterization. The matched experiment in Appendix~\ref{app:qmatched} addresses adaptation scope within QDO but does not retrofit matched training semantics onto these controls. Coverage trends can be nonmonotone because layer perturbations reinforce or cancel; the trend alone cannot identify whether fitting was independent or joint.

Representative whole-model proxy budgets at these three coverages are $(59,682,544,1,879,944)$, $(56,334,976,1,863,688)$, and $(56,270,464,1,052,680)$ in MACs and bytes. They define the matching envelope, not actual dense-materialized runtime/storage gains. The measured host-side materialization median for a full FI surface is $0.0155$ ms, not evidence that readout dominates current CPU latency or determines QPU costs.

\FloatBarrier
\subsection{Perturbation propagation and conditional resource scenarios}
\label{app:qresources}
For a sequential network, let $\kappa_j$ bound activation Lipschitz constants, $B_j$ bound the hybrid input to layer $j$, and $M_j$ bound old/new matrix operator norms. For a \emph{matrix-substitution comparison with the biases held fixed}, a one-layer-at-a-time hybrid-network argument gives
\begin{equation}
\norm{\widetilde h_L-h_L}\le\sum_{j=0}^{L-1}\kappa_jB_j\norm{\widetilde{\mathbf W}_j-\mathbf W_j}_{\rm op}
\prod_{k=j+1}^{L-1}\kappa_kM_k.
\label{eq:networkbound}
\end{equation}
At a substituted node the immediate error is then bounded by $\kappa_jB_j\norm{\widetilde{\mathbf W}_j-\mathbf W_j}_{\rm op}$; propagating downstream and summing proves the result. If biases also differ, the immediate term becomes
$\kappa_j\!\left(B_j\norm{\widetilde{\mathbf W}_j-\mathbf W_j}_{\rm op}+\norm{\widetilde b_j-b_j}\right)$
before the same downstream product is applied. Thus \eqref{eq:networkbound} is a fixed-bias perturbation bound and is \emph{not} a bound for the full QDO adaptation, where all registered trainable parameters may change. The same construction applies nodewise to the branch--trunk graph. FAL with fixed bounds does not amplify the final scalar price perturbation, but Delta/Gamma require separate differentiation.

Let $\eta\in[0,1)$ denote an assumed cost fraction for replaced work and $c\ge0$ overhead, normalized to the same reference cost as $\omega$. Define
\begin{equation}
G_{\rm ref}=\frac1{1-\omega(1-\eta)+c}.
\label{eq:gain}
\end{equation}
For positive denominator, $k>1$, and $\omega>0$, $G_{\rm ref}\ge k$ iff
\begin{equation}
\omega\ge1-1/k,\quad
0\le c\le\omega-(1-1/k),\quad
0\le\eta\le1-\frac{1-1/k+c}{\omega}.
\label{eq:gainfeasible}
\end{equation}
At $\omega=1-1/k$, only $c=\eta=0$ reaches equality. Thus $\omega=0.9$ permits a tenfold gain only at the zero-cost/zero-overhead boundary.
Table~\ref{tab:qresources} evaluates the conditional cost formula and tenfold-gain feasibility intercepts.

\begin{table}[htbp]
\caption{Analytical scenarios, not measured speedups. $G_{\rm ref}$ uses $\eta=c=0.01$; intercepts describe $G_{\rm ref}\ge10$. Eligible coverage $\rho$ and whole dense-map coverage $\omega$ differ.}
\label{tab:qresources}
\centering\small
\begin{tabular}{rrrrr}
\toprule
$\rho$ & $\omega$ & $G_{\rm ref}$ & $c_{\max}$ at $\eta=0$ & $\eta_{\max}$ at $c=0$\\
\midrule
93.6\% &0.9283419&10.996&0.0283419&0.0305296\\
99.9\% &0.9902314&33.703&0.0902314&0.0911215\\
100\% &0.9914240&35.100&0.0914240&0.0922149\\
\bottomrule
\end{tabular}
\end{table}
C94's census reconciles $102,021,120$ replaced and $7,874,944$ remaining operations to $109,896,064$. This is a dense-linear-map decomposition, not complete model work. A full query returns 3,321 classical prices; selected market quotes are a different output workload.

An end-to-end model must include target device, preparation, communication, precision, routing, shots, recovery, and unreplaced work \citep{r09,r10,r11,r12,r13,r14}. With quantum workload $\mathcal W_Q(H)$ and compatible throughput $\Theta_H$,
\begin{equation}
T_{\rm hybrid}(H)=T_{\rm nonQ}(H)+\mathcal W_Q(H)/\Theta_H,\qquad
S_{\rm e2e}(H)=T_{\rm classical}(H)/T_{\rm hybrid}(H).
\label{eq:e2e}
\end{equation}
For positive quantum work, $S_{\rm e2e}\ge k$ requires $T_{\rm nonQ}<T_{\rm classical}/k$ and
\begin{equation}
\Theta_H\ge\frac{\mathcal W_Q(H)}{T_{\rm classical}(H)/k-T_{\rm nonQ}(H)}.
\end{equation}
A device throughput can be substituted only when workload, accuracy, and execution conventions match. Returning an explicit $M$-point classical surface needs $\Omega(M)$ scalar slots/materialization operations. A stronger $\Omega(M\log(B/\varepsilon))$ bit bound needs a full output cube $[-B,B]^M$, which is not asserted for financial surfaces. If a per-scalar output cost $c_{\rm out}$ is measured for the explicit-list task,
$T_{\rm hybrid}\ge T_{\rm fixed}+c_{\rm out}M$.
Returning an amplitude-encoded state, an expectation estimate, or a few prices is a different task; such quantum pricing bounds do not directly certify acceleration for explicit full-surface output.

\FloatBarrier
\section{Real-Market Evaluation with Constant-Volatility Inputs}
\label{app:market}

\FloatBarrier
\subsection{Data identity, quote provenance, and input mapping}
The study uses Nasdaq-100 index option records identified as NDX/.NDX: an OptionMetrics extract through WRDS with LSEG-linked underlying/identifier fields. The observation window is 13 May--7 August 2026, covering 60 trading dates. The own-IV protocol contains 13,131 calls. The primary held-out-IV protocol begins on 26 May and contains 2,586 targets from 101 date--expiry chains on 52 dates. All 2,586 targets are uniquely recoverable by source-row identifier and a composite quote key. Raw option fields include \texttt{best\_bid} and \texttt{best\_offer}; bid, offer, mid, IV, and target identity are carried from the same daily source row, and frozen model predictions align to that row. Quote time is missing for all rows, so intraday timestamp synchrony is not established. The data are market-derived \emph{constant-volatility inputs}, not reconstructed local-volatility fields.

Cached $q$ is a decimal dividend-yield feature and cached $r$ a decimal one-year zero-coupon-rate proxy shared across 7--90-day contracts on each date. For quote $i$, let
\begin{equation}
\tau_i=\mathrm{DTE}_i/365,\qquad F_i=S_i e^{(r_i-q_i)\tau_i},\qquad S_i^{\rm eff}=S_i e^{-q_i\tau_i}.
\end{equation}
The one-year zero-dividend model is queried via
\begin{equation}
K_i^{\rm model}=100,\quad S_i^{\rm model}=100S_i^{\rm eff}/K_i^{\rm mkt},\quad
\bar t_i=1-\tau_i,\quad [K,r,\sigma_0,\beta,\gamma]=[100,r_i,\widehat\sigma_i,0,0].
\label{eq:marketmap}
\end{equation}
All 231 volatility sensors equal $\widehat\sigma_i$; the 101 payoff sensors use strike 100. Own-IV input sets $\widehat\sigma_i=\mathrm{IV}_i/100$; held-out input reconstructs it as below. Since $\beta=\gamma=0$, the corresponding continuous model has spatially and temporally constant variance at the supplied rate/volatility pair. Under the effective-spot zero-dividend mapping in \eqref{eq:marketmap}, the exact same-input solution is therefore the Black--Scholes carrier itself:
\begin{equation}
V=\Cref,\qquad W=V-\Cref=0,\qquad R^*=0\quad(\tau>0).
\label{eq:marketconstant}
\end{equation}
Thus this experiment checks whether the learned correction remains consistent on a known constant-volatility subfamily; it is not evidence of general local-volatility market transfer. Since $T_{\rm model}=1$, $\bar t_i=1-\tau_i$ retains the actual remaining maturity, not a stretched one-year expiry. Training standardizers remain unchanged; no market scaler is fitted. For dimensionless network output $\widehat c_i=\widehat V_i^{\rm model}/K_i^{\rm model}$, recover
\begin{equation}
\widehat C_i^{\rm mkt}=K_i^{\rm mkt}\widehat c_i.
\end{equation}
The dividend substitution is a constant-volatility quote mapping, not a general dividend-removal identity for every state-dependent local-volatility field.

\FloatBarrier
\subsection{Held-out IV, spread-aware error measures, and neural baselines}
Within a date--expiry chain, strikes and source-row identifiers determine order; duplicate strikes retain the smallest source identifier. Endpoints stay in the fit. Internal positions 4, 9, 14, and so forth, under recorded one-based indexing, are withheld jointly. A natural cubic spline fits $(\log(K/F),\mathrm{IV}/100)$ using retained quotes, with extrapolation disabled. Recorded checks find no target in its own fit, no extrapolated target, and no invalid interpolation. The original eligibility rule used the target's own-IV domain flag before fitting, so predictions were target-value blind but eligibility was not logically fully target-selection blind. A sensitivity based only on reconstructed IV and target-independent metadata gives the \emph{identical realized population}: 2,586 targets, 101 chains, 52 dates, zero additions/removals, and unchanged learned-model ordering.

For market mid $\mathbf C^{\rm mkt}$, same-input Black--Scholes values $\mathbf C^{\rm BS}$, and model $\widehat{\mathbf C}$, distinguish
\begin{equation}
\operatorname{rRMSE}_{\rm mkt}=\frac{\norm{\widehat{\mathbf C}-\mathbf C^{\rm mkt}}_2}{\norm{\mathbf C^{\rm mkt}}_2},\qquad
\RelL_{\rm op}=\frac{\norm{\widehat{\mathbf C}-\mathbf C^{\rm BS}}_2}{\norm{\mathbf C^{\rm BS}}_2}.
\label{eq:marketmetrics}
\end{equation}
With bid $B_i$, offer $A_i$, mid $M_i=(A_i+B_i)/2$, and half-spread $H_i=(A_i-B_i)/2$, define
\begin{equation}
E_i^{\rm spr}=\frac{|\widehat C_i-M_i|}{H_i},\qquad
D_i^{\rm BS}=\frac{|\widehat C_i-C_i^{\rm BS}|}{H_i},\qquad
I_i=\mathbf 1\{B_i\le\widehat C_i\le A_i\}.
\label{eq:spreadmetrics}
\end{equation}
All 2,586 held-out-IV rows have finite positive spreads and pass this spread-eligibility gate. ATM-short p95 is measured against the same-input Black--Scholes operator, not market mid, using $|\log(K/F)|\le0.03$ and 7--30 DTE. All quote predictions are before PP.

\begin{table}[htbp]
\caption{Market-derived evaluation. Neural entries are five-seed mean $\pm$ sample SD. Operator error and ATM-short p95 use the same-input Black--Scholes reference; their Black--Scholes entries are zero by definition. ``Normalized'' denotes Normalized DeepONet. Lower is better.}
\label{tab:market}
\centering\small
\begin{tabular}{llrrr}
\toprule
Input & Configuration & Market rRMSE & Operator $\RelL$ & ATM-short p95\\
\midrule
Own IV &Black--Scholes &\textbf{0.54\%}&\textbf{0}&\textbf{0}\\
&Normalized&$(19.23\pm4.21)\%$&$0.1912\pm0.0416$&$725.14\pm109.87$\\
&Normalized + FAL&$(4.14\pm0.62)\%$&$0.0419\pm0.0056$&$80.55\pm17.85$\\
&FI&$(1.23\pm0.15)\%$&$0.0143\pm0.0017$&$14.42\pm1.98$\\
\midrule
Held-out IV &Black--Scholes &\textbf{0.98\%}&\textbf{0}&\textbf{0}\\
&Normalized&$(19.64\pm4.29)\%$&$0.1952\pm0.0424$&$722.56\pm110.59$\\
&Normalized + FAL&$(4.26\pm0.61)\%$&$0.0417\pm0.0056$&$79.52\pm18.02$\\
&FI&$(1.53\pm0.13)\%$&$0.0145\pm0.0017$&$14.55\pm2.02$\\
\bottomrule
\end{tabular}
\end{table}

\begin{table}[htbp]
\caption{Spread-aware held-out-IV consistency on the unchanged 2,586-target population. Neural predictions are arithmetic means of the five frozen seed predictions, evaluated after averaging prices; they are not means of five separately computed metrics. Market error is relative to quote mid; BS deviation is relative to the exact same-input Black--Scholes operator. Spread quantities use half-spread $H_i$, so a market-mid error at most one half-spread is necessary and sufficient for lying inside the bid--offer interval.}
\label{tab:marketspread}
\centering\small
\begin{tabular}{lrrrrr}
\toprule
Model & Market rRMSE & Operator $\RelL$ & Inside spread & Median $E^{\rm spr}$ & Median $D^{\rm BS}$\\
\midrule
Black--Scholes & 0.980\% & 0 & 93.74\% & 0.215 & 0\\
Normalized & 19.56\% & 0.1943 & 18.72\% & 11.391 & 10.997\\
Normalized + FAL & 4.05\% & 0.03958 & 39.48\% & 1.612 & 1.471\\
FI & 1.519\% & 0.01444 & 50.04\% & 0.998 & 1.201\\
\bottomrule
\end{tabular}
\end{table}
The ordering of learned models survives both input protocols. Black--Scholes remains more accurate against quoted prices with the same volatility information. FI nevertheless moves the learned approximation into a market-relevant scale: its median market-mid error is about one half-spread, while its median deviation from the exact same-input Black--Scholes operator is $1.201$ half-spreads. The corresponding p95 values are $4.800$ and $4.805$ half-spreads, so an unqualified ``small relative to spread'' claim is not supported. This is approximation consistency, not market local-volatility generalization or calibration.

Figure~\ref{fig:app_market} separates the five-seed metric summary
from the diagnostic computed on averaged seed predictions. This distinction
explains the slightly different FI market rRMSE values $1.53\%$ and $1.519\%$:
the population is unchanged, but the aggregation is different.
\begin{figure}[!htbp]
\centering
\begingroup\fontfamily{ptm}\selectfont\mathversion{figuretimes}

\begin{tikzpicture}
\begin{groupplot}[apphalf,group style={group size=2 by 1,horizontal sep=1.5cm},
 xmin=.55,xmax=4.45,xtick={1,2,3,4},xticklabels={BS,Norm.,+FAL,FI},xlabel={Constant-volatility input model}]
\nextgroupplot[title={(a) Held-out-IV market error},ymode=log,ymin=.5,ymax=30,ylabel={Market rRMSE (\%)},
 ytick={1,2,5,10,20},yticklabels={1,2,5,10,20}]
\addplot[only marks,appNavy,mark=triangle*] coordinates {(1,.98)};
\addplot[only marks,appGray,mark=*,error bars/.cd,y dir=both,y explicit] coordinates {(2,19.64)+-(0,4.29)};
\addplot[only marks,appBlue,mark=square,error bars/.cd,y dir=both,y explicit] coordinates {(3,4.26)+-(0,.61)};
\addplot[only marks,appRed,mark=diamond*,error bars/.cd,y dir=both,y explicit] coordinates {(4,1.53)+-(0,.13)};
\nextgroupplot[title={(b) Averaged-price spread diagnostic},ymin=0,ymax=100,ytick={0,25,50,75,100},ylabel={Predictions inside bid--offer (\%)}]
\addplot[only marks,appNavy,mark=triangle*] coordinates {(1,93.74)};
\addplot[only marks,appGray,mark=*] coordinates {(2,18.72)};
\addplot[only marks,appBlue,mark=square] coordinates {(3,39.48)};
\addplot[only marks,appRed,mark=diamond*] coordinates {(4,50.04)};
\node[font=\scriptsize,align=left,anchor=north west] at (rel axis cs:.03,.94)
{Five seed prices averaged\\before computing this metric};
\end{groupplot}
\end{tikzpicture}

\endgroup
\caption{Two aggregation conventions on the same 2,586 held-out-IV quotes.
(a) Per-seed market rRMSE, summarized by five-seed mean and sample SD,
from Table~\ref{tab:market}; Black--Scholes is deterministic and has no
seed error bar. The vertical axis is logarithmic. (b) Fraction inside the
bid--offer interval, computed from the arithmetic average of five seed
prices, from Table~\ref{tab:marketspread}. No interval is supplied for (b).
``Norm.'' denotes Normalized DeepONet and ``+FAL'' its FAL-only extension.
FI improves on both neural baselines, while same-input Black--Scholes remains
more accurate and more frequently inside the spread. The panels do not
constitute local-volatility calibration or a trading-performance test.}
\label{fig:app_market}
\end{figure}

\FloatBarrier
\subsection{Temporal and IV-reconstruction robustness}
\label{app:marketrobustness}
The following temporal and IV-reconstruction diagnostics use the arithmetic mean of the five frozen FI seed predictions, as in Table~\ref{tab:marketspread}. They are distinct from both the per-seed statistics in Table~\ref{tab:market} and the single-parent QDO comparisons. The 52 held-out-IV dates are frozen chronologically before evaluation: the final 16 dates (1,376 quotes) form a sealed robustness subset. No model retraining or checkpoint selection uses either period. On the sealed dates, FI market rRMSE is $1.452\%$, operator $\RelL$ is $1.444\%$, and inside-spread fraction is $55.60\%$; same-input Black--Scholes market rRMSE is $0.893\%$ and inside-spread fraction $95.13\%$. The direction and scale of the operator-consistency result therefore persist late in the sample.

Table~\ref{tab:marketrobustness} holds the target population fixed while changing only the target-excluded IV interpolator.

\begin{table}[htbp]
\caption{Held-out-IV reconstruction sensitivity on the identical 2,586 targets. Natural cubic is the frozen primary protocol; PCHIP and linear interpolation use the same withheld positions, target exclusion, endpoint treatment, and no extrapolation.}
\label{tab:marketrobustness}
\centering\small
\begin{tabular}{lrrrr}
\toprule
IV method & IV RMSE (decimal) & FI market rRMSE & FI operator $\RelL$ & FI inside spread\\
\midrule
Natural cubic & 0.003206 & 1.519\% & 0.0144449 & 50.04\%\\
PCHIP & 0.002708 & 1.437\% & 0.0144411 & 50.08\%\\
Linear & 0.002651 & 1.427\% & 0.0144395 & 50.62\%\\
\bottomrule
\end{tabular}
\end{table}
Across these reconstruction methods the FI operator $\RelL$ range is only about $0.037\%$ relative, supporting stability to reasonable target-excluded IV interpolation choices rather than a method-selection advantage.

\FloatBarrier
\subsection{Multi-coverage QDO/FI market bridge and error alignment}
\label{app:marketpaired}
All market-bridge evaluations are frozen inference: no training, fitting, calibration, coverage selection, seed selection, checkpoint selection, filter change, IV-method change, or temporal re-splitting uses market outcomes. C49/C87/C94/C99 each use the same three paired QPF seeds and descend from the same dense FI parent. C100 uses a different historical seed registry and is descriptive only. A frozen C94 replay reproduces the historical 2,586-row predictions to maximum absolute discrepancy $4.55\times10^{-13}$ and exactly reproduces the reported market and operator metrics within floating-point tolerance.

Table~\ref{tab:marketpaired} summarizes the paired coverage comparisons and their conditional chain/date intervals.

\begin{table}[htbp]
\caption{Frozen quantum-to-market bridge. Full-sample ratios are geometric means over three paired QPF seeds; all 12 seed-level ratios are below one. Chain/date intervals use 5,000 cluster resamples conditional on the fixed trained realizations. Sealed ratios use the frozen 16-date subset. Perturbation scales are $|\mathrm{QDO}-\mathrm{FI}|$ divided by half-spread.}
\label{tab:marketpaired}
\centering\small
\resizebox{\linewidth}{!}{
\begin{tabular}{lrrrrrr}
\toprule
Coverage & Full RMSE ratio & Chain 95\% CI & Date 95\% CI & Sealed ratio & Wins & Median / p95 scale\\
\midrule
C49 & 0.7999 & [0.7507,0.8471] & [0.7433,0.8495] & 0.8720 & 3/3 & 1.215 / 5.373\\
C87 & 0.7617 & [0.7111,0.8101] & [0.7082,0.8074] & 0.8192 & 3/3 & 1.137 / 4.877\\
C94 & 0.7694 & [0.7172,0.8181] & [0.7147,0.8174] & 0.8345 & 3/3 & 1.154 / 5.139\\
C99 & 0.7460 & [0.6931,0.7938] & [0.6878,0.7934] & 0.8018 & 3/3 & 1.045 / 4.793\\
\bottomrule
\end{tabular}}
\end{table}
The C94 market improvement is therefore not unique: all four paired coverages improve market RMSE in all three seeds, and all chain/date conditional intervals lie below one. The sequence is not strictly monotone in coverage, so these data do not support a claim that higher quantum-compatible coverage systematically improves market fit. C100's non-paired historical seeds give a descriptive geometric-mean RMSE ratio $0.7756$, broadly consistent with the paired candidates but excluded from paired inference.

For an algebraic decomposition of the market change, write $f_Q=f_{\rm FI}+\delta_m$ and $e=f_{\rm FI}-y$, where $y$ is the quote midpoint and $\mathbb E$ below is the equal-weight empirical average over the fixed quote population. Then
\begin{equation}
\operatorname{MSE}(f_Q)-\operatorname{MSE}(f_{\rm FI})
=2\E[e\delta_m]+\E[\delta_m^2].
\label{eq:alignment}
\end{equation}
Across C49/C87/C94/C99, the alignment term is negative in all 12 paired seed-level comparisons and its magnitude exceeds the perturbation energy in all 12. The identity closes to floating-point tolerance. Thus the observed market reduction is consistently associated with favorable alignment of the frozen QDO perturbation with the parent error on this market population. This is sample-dependent error cancellation, not causal evidence of quantum regularization. Synthetic global fidelity is only a partial descriptive predictor of the market ratio (12 paired points); synthetic near-price p95 has the opposite correlation direction, so no single synthetic metric is claimed to predict market performance.

\FloatBarrier
\subsection{Feature-cache provenance and release limits}
\label{app:marketprovenance}
The fixed cache uses a decimal dividend feature $q$ and decimal one-year rate $r$, without a second division by 100. The associated rate series is FRED \texttt{THREEFY1}; the associated dividend source is \texttt{NASDAQNDXDIV}, a dividend-point series, not directly a yield. Original dividend-point observations and historical reset, rolling, point-to-yield, annualization, and compounding transformations are unavailable. Rate export/fill implementation is only partly documented. These gaps are not filled using plausible but unverified formulas.

Downstream quote matching uses exact cache dates and drops rows missing spot, rate, or dividend features. A source check reported a 31 July 2026 rate carried into 3--7 August, a 3--7-calendar-day lag, without recovering the fill code. Exact-date cache matching is therefore not proof of contemporaneous source observations. The raw contract multiplier and currency convention are not populated in the supplied extract; this does not block normalized comparisons because bid, offer, mid, Black--Scholes, and neural predictions share the same quoted price units, but it blocks an independently verified multiplier/currency claim. A replay of ten fixed rows and 90 scalars has maximum absolute difference $9.37\times10^{-17}$ and maximum relative difference $1.68\times10^{-14}$. Quote time is unavailable, so same-row daily alignment is verified but intraday synchronization is not.

Selected exercise, settlement, correction, and IV-inversion metadata also remain incomplete. Accordingly, the mapping is reported from the fixed cache, not as a fully reconstructed source-to-model pipeline. Data licenses and anonymization must be checked before release; quote-level licensed records are excluded from lightweight manifests.

\FloatBarrier
\section{Related Work and Reproducibility}
\label{app:context}
\paragraph{Operator learning and financial surrogates.}
Universal operator approximation \citep{r17}, DeepONet \citep{r18}, Fourier and integral-kernel operators \citep{r19,r20}, and multiple-input constructions \citep{r21} provide ways to amortize families of PDE solutions. Neural pricing and hedging methods \citep{r22}, deep PDE solvers \citep{r23,r24}, Black--Scholes approximation analyses \citep{r25}, and volatility-calibration surrogates \citep{r26} establish the broader financial context. Our contribution concerns the representation of the coefficient-to-price map through input-dependent strike matching. Its empirical scope distinguishes held-out ID prediction, parameter-support extrapolation within the benchmark family, and off-family directional responses.

\paragraph{PDE loss and derivative accuracy.}
Physics-informed learning \citep{r27} and its operator variants \citep{r28,r29} incorporate known equations into training, with outcomes affected by loss balancing and optimization \citep{r30,r31,r32,r36,r37}. Derivative-informed methods explicitly target sensitivities \citep{r38,r39,r40,r65,r66}, while EquiNO and MD-PNOP illustrate other ways to incorporate equation structure \citep{r51,r52}. Our representation uses an analytic carrier independently of a PDE
loss. The controlled factorial separately evaluates how adding the
PDE loss affects price error, PDE residual RMS, and derivative errors.

\paragraph{Financial constraints and analytic representations.}
Option-surface restrictions couple contracts \citep{r02,r33,r34}, and learning-based calibration must account for their implications \citep{r35}. Second-order financial knowledge and smooth or arbitrage-constrained surface models address this structure \citep{r41,r42,r43,r44,r45,r46}. Residual-learning methods refine analytic stochastic-volatility prices \citep{r72}; ETCNN uses terminal masks and moneyness normalization \citep{r47}; and a recent 0DTE model learns a maturity-gated variance correction within a Black--Scholes representation \citep{r70}. Analytic output transforms and hard-constraint constructions provide further precedents \citep{r48,r49,r50}. Building on these ideas, the distinction here is strike-line variance matching for a local-volatility operator, together with its correction order, critical scaling, and output-stage analysis. FAL imposes pointwise bounds; cross-contract shape restrictions remain a separate property.

\paragraph{Analysis and quantum-compatible realizations.}
The correction analysis uses diffusion, parametrix, and short-maturity methods \citep{r55,r56,r57,r58}, together with interior regularity \citep{r62,r63}. Exact compilation uses established Givens/RBS and orthogonal-network constructions \citep{r53,r54}; the restricted-approximation analysis uses Procrustes geometry \citep{r60}. Quantum DeepONet transfers a trained orthogonal model to circuit evaluation \citep{r15}, while other quantum neural operators use different structured parameterizations or spectral embeddings \citep{r16,r71}. Our starting point is an unconstrained dense FI parent, with frozen exact compilation separated from structured approximation and adaptation. Quantum derivative-pricing algorithms \citep{r09,r10,r11,r12,r13,r14} have their own input, output, and hardware assumptions, which must be included when comparing computational costs.

\paragraph{Evaluation scope.}
The principal five-seed ID comparison, matched five-seed carrier--rescaling ablation, complete parameter-support OOD evaluation, three-seed kink intervention, objective factorial, single-checkpoint off-family test, selected-model realization comparison, and selected-model market pairing use distinct evaluation protocols. Each metric is specified by its model, population, output stage, mask, reference, and aggregation. Surface-bootstrap intervals condition on the trained models; they do not measure variation over newly trained seeds. The separate sensor-count study is likewise distinct from the multi-family JVP development study. The carrier--rescaling study uses paired initialization and sampling schedules across exponents and reports pre-FAL mechanism metrics separately from post-FAL inference-stage metrics.

\paragraph{Reproducibility scope.}
The retained synthetic-benchmark records support replay of selected-state
evaluations through fixed data, selected checkpoints, saved predictions,
evaluation masks, configurations, and checkpoint metadata.
The current principal DeepONet baseline is a separate five-seed refit,
and the complete parameter-support OOD study is maintained as a separate
frozen-state replay.
A fresh retraining of every historical principal run has not been performed,
and the original split-generation stream remains incomplete.
Accordingly, our reproducibility claims concern the fixed realized datasets,
audited selected states, and the explicitly identified refits and replays.
Licensed quote-level market records are not redistributed.
\end{document}